\newcommand{\R}{\mathscr{R}}
\newtheorem{theorem}{Theorem}[section]
\newtheorem{lemma}{Lemma}[section]
\newtheorem{remark}{Remark}[section]
\newtheorem{proposition}{Proposition}[section]
\newtheorem{definition}{Definition}[section]
\definecolor{black}{rgb}{0.00,0.00,0.45}
\begin{document}
\title[Double-bosonization and Majid's Conjecture (IV)]{Double-bosonization and Majid's Conjecture, (IV): Type-Crossings from $A$ to $BCD$}
\author[H. Hu]{Hongmei Hu}
\address{Department of Mathematics,  Shanghai Key Laboratory of Pure Mathematics and Mathematical Practice,
East China Normal University,
Minhang Campus,
Dong Chuan Road 500,
Shanghai 200241,
PR China}
\email{hmhu0124@126.com}

\author[N. Hu]{Naihong Hu$^\ast$}
\address{Department of Mathematics,  Shanghai Key Laboratory of Pure Mathematics and Mathematical Practice,
East China Normal University,
Minhang Campus,
Dong Chuan
Road 500,
Shanghai 200241,
PR China}
\email{nhhu@math.ecnu.edu.cn}

\subjclass{Primary 16S40, 16W30, 17B37, 18D10; Secondary  17B10, 20G42, 81R50}
\date{April 14, 2015}


\keywords{Double-bosonization, braided category, braided groups, type-crossing construction, normalized $R$-matrix,  representations}

\thanks{N.~H., the corresponding author, supported by the NNSFC (Grant No.
 11271131).}

\date{}
\maketitle

\newcommand*{\abstractb}[3]{ %
                             \begingroup%
                             \leftskip=8mm \rightskip=8mm
                             \fontsize{11pt}{\baselineskip}\noindent{\textbf{Abstract} ~}{#1}\\ 
                             {\textbf{Keywords} ~}{#2}\\ 
                             {\textbf{MR(2010) Subject Classification} ~}{#3}\\ 
                                                          \endgroup
                           }
\begin{abstract}
Both in Majid's double-bosonization theory and in Rosso's quantum shuffle theory, the rank-recursive and type-crossing construction for $U_q(\mathfrak g)$'s is
still a remaining open question.
Working in Majid's framework in this paper, based on the generalized double-bosonization Theorem we proved before,
we further describe explicitly the type-crossing construction of $U_q(\mathfrak g)$'s for
$(BCD)_n$ series directly from type $A_{n-1}$ via adding a pair of dual braided groups determined by a pair of $(R, R')$-matrices of type $A$
derived from the respective suitably chosen representations.
Combining with our results of the first three papers of this series,
this solves Majid's conjecture, that is, any quantum group $U_q(\mathfrak g)$
associated to a simple Lie algebra $\mathfrak g$ can be grown out of $U_q({\mathfrak {sl}}_2)$
recursively by  a series of suitably chosen double-bosonization procedures.
\end{abstract}

\bigskip
\bigskip

\section{Introduction and our results}
There are several well-known ways to understand the structure of
quantum groups, for example, Drinfeld's quantum double of any
finite-dimensional Hopf algebra introduced in \cite{dri1}, the
FRT-construction in \cite{FRT1} based on $R$-matrices limited to
the classical types. Afterwards, Majid rediscovered the Radford
biproduct, nowadays named ``Radford-Majid bosonization", in the
framework of a braided monoidal category of left modules over a Hopf algebra whose Drinfeld center is just the category of
Yetter-Drinfeld modules over it (see
\cite{radford}, \cite{rt}, \cite{majid8}, \cite{majid9}, and for the first proof of the latter see that of Example 1.3 in \cite{majid10}),
Majid then developed the double-bosonization theory, which
improved the FRT-construction and extended Drinfeld's quantum
double to generalized quantum double associated to a weakly
quasitriangular dual pair (not necessarily nondegenerate) of braided
groups coming from the (co)module category of a (co)quasitriangular
Hopf algebra. Another analogous construction in spirit was given by Sommerh\"auser in \cite{somm} in a larger category of
Yetter-Drinfeld modules, but the latter is seemly not suitable for the rank-recursive construction considered here.

Roughly speaking, the Majid's double-bosonization theory is given as
follows. Assigned to a pair of dual braided groups $B^{\star}, B$
covariant under a underlying quasitriangular Hopf algebra $H$, there
is a new quantum group on the tensor space $B^{\star}\otimes
H\otimes B$ by double-bosonization in \cite{majid1}, consisting of
$H$ extended by $B$ as additional `positive roots' and its dual
$B^{\star}$ as additional `negative roots'. On the other hand, Majid
claimed that the rank-recursive construction of $U_q(\mathfrak g)$'s
can be obtained in principle in his context, and expected his
double-bosonization allows to
generate a tree of quantum groups. 
But how to do it? Except two examples in low rank given by Majid
(see \cite{majid1,majid6,majid7}), it is still a remaining open
question for almost 20 years with a main challenge from representation theory to elaborate the full tree structure of
quantum groups generated by the double-bosonization procedures. The
technical difficulties mainly involve in the exact choices of
certain representations as well as treating with some nonstandard
$R$-matrices derived from these representations. In general, as we known,
it is difficult to capture some information encoded in a
(nonstandard) $R$-matrix associated to some representation of
$U_{q}(\mathfrak g)$, for example, the spectral decomposition of
$R$-matrix, to normalize $R$-matrices, etc. However, the authors have overcome
some technical difficulties to give concretely the rank-recursive
constructions of $U_q(\mathfrak g)$'s for the $ABCD$ series in
\cite{HH1}, the type-crossing constructions for types $F_4, G_2$ in
\cite{HH2} and the type-crossing rank-recursive constructions for
types $E_6, E_7, E_8$ in \cite{HH3}. Since Majid's framework on
braided groups in certain braided categories has been developed currently
into the framework of Nichols algebras in the
Yetter-Drinfeld categories for classifying finite-dimensional
pointed Hopf algebras due to Andruskiewitsch-Schneider and
Heckenberger et al (see \cite{AS1, AS2}, \cite{HS}, etc.), we believe in some
sense that the study on how the double-bosonization
procedure yields what kinds of new quantum groups is significant.
An interesting application of our constructions might be connected to a recent work of Cuntz and Lentner on Nichols algebras (see \cite{cl}).

Some examples in low ranks given in \cite{HH1} show that $B_{2}, C_{3}, D_{4}$ can be grown out of $U_{q}({\mathfrak {sl}}_n)$, for $n=2,\,3,\,4$, respectively.
Then, what about the general type-crossing construction of the $(BCD)_n$ series directly from type $A_{n-1}$? This is also a
remaining problem of Rosso in his quantum shuffle setting (see \cite{rosso}).
If this problem is solved,
we will confirm completely the Majid's conjecture on how step by step and directly from type $A_1$ to get all quantum groups $U_q(\mathfrak g)$'s of
the finite dimensional complex simple Lie algebras
$\mathfrak g$.
As a reminder, we mention that
Rosso's quantum shuffle theory in \cite{rosso} gives an intrinsic,
or functorial understanding of $U_q(\mathfrak g)$ from the braidings.
Rosso also asserted the rank-recursive or type-crossing constructions of all positive part $U_q(\mathfrak n)$'s existing in his quantum shuffle
setting and listed an interpreting recipe simply at the Lie theory level. However, for the completeness of both theories, it is desirable to know how to proceed them in the respective
contexts. This also stimulates us to attack this question first in the Majid's framework.

The paper is organized as follows.
In section 2,
we recall some basic facts about the FRT-construction,
the Majid's double-bosonization theorem and our generalized double-bosonization theorem (\cite{HH2}).
Section $3$ is devoted to exploring the general type-crossing constructions for the $(BCD)_n$ series directly from node $A_{n-1}$.
Firstly, based on the vector representation $T_{V}$ of $U_{q}(\mathfrak{sl}_{n})$ and the corresponding standard $R$-matrix,
through choosing a matched matrix $R'$ such
that $(R, R')$ derives a pair of dual braided groups $\tilde{V}^{\vee}(R),\tilde{V}(R)$ covariant under $U_{q}(\mathfrak{sl}_{n})$, which is different from those used in \cite{HH1},
we construct the expected quantum group $U_{q}({\mathfrak {so}}_{2n+1})$ of higher rank $1$. Note that our construction procedure here for type $B_n$ is a bit different from those of types $C_n$ and $D_n$.
Secondly, in order to construct the expected $U_{q}({\mathfrak {sp}}_{2n})$ and $U_{q}({\mathfrak {so}}_{2n})$ of higher rank $1$,
we consider the quantum symmetric square $sym^{2}V$ and the quantum second exterior power $\wedge^{2}V$ of the vector representation $T_{V}$ of $U_{q}(\mathfrak{sl}_{n})$.
We first analyse the $R$-matrices $R_{VV}$ corresponding to these representations,
and obtain the minimal polynomials of $PR_{VV}$ with our skillful techniques sufficiently exploiting the features of the representations we work with. We get
a pair of matrices $(R, R')$ satisfying $(PR{+}I)(PR^{\prime}{-}I)=0$,
and new mutually dual braided groups $V^{\vee}(R^{\prime},R_{21}^{-1}), V(R^{\prime},R)$ covariant under $U_{q}(\mathfrak{sl}_{n})$, and then to arrive at our required objects.
As a final consequence of our results including those in \cite{HH1, HH2, HH3}, we confirm that Majid's conjecture is true, that is, starting with $U_{q}(\mathfrak{sl}_{2})$,
we obtain step by step all $U_{q}(\mathfrak g)$'s associated with the finite-dimensional complex simple Lie algebras $\mathfrak g$
by a series of delicately selected double-bosonization procedures.
In section 4, we contrast Majid's double-bosonization with Rosso's quantum shuffle theory to deduce the same extended Dynkin diagrams of higher rank one,
and draw the tree structure of quantum groups $U_{q}(\mathfrak{g})$'s, for all the finite-dimensional complex simple Lie algebras $\mathfrak{g}$,
grown out of the source node $A_1$ inductively by a series of suitably chosen double-bosonization procedures we build.

\section{Preliminaries}
In this paper, let $k$ be the complex field, $\mathbb{R}$ the real
field, $E$ the Euclidean space $\mathbb{R}^{n}$ or a suitable
subspace. Denote by $\varepsilon_{i}$ the usual orthogonal unit
vectors in $\mathbb{R}^{n}$. Let $\mathfrak g$ be a
finite-dimensional complex semisimple Lie algebra with simple roots
$\alpha_{i}$, $\lambda_{i}$ the fundamental weight corresponding to
simple root $\alpha_{i}$. The Cartan matrix of $\mathfrak g$ is
$(a_{ij})$, where
$a_{ij}=\frac{2(\alpha_{i},\alpha_{j})}{(\alpha_{i},\alpha_{i})}$,
and $d_{i}=\frac{(\alpha_{i},\alpha_{i})}{2}$. Let $(H,\R)$ be a
quasitriangular Hopf algebra, where $\R=\R^{(1)}\otimes \R^{(2)}$ is
the universal $R$-matrix, $\R_{21}=\R^{(2)}\otimes \R^{(1)}$,
denote by $\Delta,\eta,\epsilon$ its coproduct, counit, unit, and by
$S$ its antipode. We shall use the Sweedler's notation: for $h\in
H$, $\Delta(h)=h_{1}\otimes h_{2}$. $H^{op} \ (H^{cop})$ denotes the
opposite (co)algebra structure of $H$, respectively.
$\mathfrak{M}_{H} \ ({}_{H}\mathfrak{M}$) denotes the braided
category consisting of right (left) $H$-modules. If there exists a
coquasitriangular Hopf algebra $A$ such that the dual pair $(H,A)$
is a weakly quasitriangular, then $\mathfrak{M}_{H} \
({}_{H}\mathfrak{M}$) is equivalent to the braided category
${}^{A}\mathfrak{M} \ (\mathfrak{M}^{A})$ consisting of left (right)
$A$-comodules. For the detailed description of these, we left the
readers to refer to Drinfeld's and Majid's papers \cite{dri2},
\cite{majid2}, \cite{majid3}, and so on. By a braided group we mean
a braided bialgebra or Hopf algebra in some braided category. In
order to distinguish from the ordinary Hopf algebras, denote by
$\underline{\Delta},\underline{S}$ its coproduct and antipode,
respectively.

\subsection{FRT-construction}
Let $R$ be an invertible matrix obeying the quantum Yang-Baxter equation.
There is a bialgebra $A(R)$ \cite{FRT1} corresponding to the $R$-matrix,
called the FRT-bialgebra.
\begin{definition}
$A(R)$ is generated by $1$ and $T=\{t^{i}_{j}\}$,
having the following defining relations:
$$RT_{1}T_{2}=T_{2}T_{1}R,\quad
~~\Delta(T)=T\otimes T,\quad
\epsilon(T)=I,\quad
\mbox{where~}
T_{1}=T\otimes I,\quad
T_{2}=I\otimes T.$$
$A(R)$ is a coquasitriangular bialgebra with
$\R:~A(R)\otimes A(R) \longrightarrow k$ such that $\R(t^{i}_{j}\otimes t^{k}_{l})=R^{ik}_{jl}$,
where $R^{ik}_{jl}$ denotes the entry at row $(ik)$ and column $(jl)$ in the matrix $R$.
\end{definition}

On the other hand, $\Delta$ in $A(R)$ induces the multiplication in $A(R)^{\ast}=\text{Hom}(A(R),k)$.
 In \cite{FRT1}, $U_{R}$ is defined to be the subalgebra of $A(R)^{\ast}$ generated by $L^{\pm}=(l^{\pm}_{ij})$,
with relations
$(PRP)L_{1}^{\pm}L_{2}^{\pm}=L_{2}^{\pm}L_{1}^{\pm}(PRP),
$
$
(PRP)L_{1}^{+}L_{2}^{-}=L_{2}^{-}L_{1}^{+}(PRP),
$
where $P$ is the permutation matrix $P: P(u\otimes v)=v\otimes u$, $l^{\pm}_{ij}$ is defined by
$
(l^{+}_{ij},t^{k}_{l})=R^{ki}_{lj},
$
$
(l^{-}_{ij},t^{k}_{l})=(R^{-1})^{ik}_{jl}.
$
Moreover,
the algebra $U_{R}$ is a bialgebra with coproduct
$\Delta(L^{\pm})=L^{\pm}\otimes L^{\pm}$, and counit $\varepsilon(l_{ij}^{\pm})=\delta_{ij}$ (also cf. \cite{klim}).

Specially, when $R$ is one of the classical $R$-matrices, bialgebra
$A(R)$ has a quotient coquasitriangular Hopf algebra, denoted by
$Fun(G_{q})$ or $\mathcal{O}_{q}(G)$, and then $U_{R}$ has a
corresponding quotient quasitriangular Hopf algebra, which is
isomorphic to the extended quantized enveloping algebra
$U_{q}^{\text{ext}}(\mathfrak{g})$ (with groups-like elements indexed in a refined weight lattice in comparison with $U_q(\mathfrak{g})$). Moreover, there exists a
(non-degenerate) dual pairing $\langle ,\, \rangle$ between
$\mathcal{O}_{q}(G)$ and $U_{q}^{\text{ext}}(\mathfrak{g})$. The way
of getting the resulting quasitriangular algebras
$U_{q}^{\text{ext}}(\mathfrak{g})$ is the so-called FRT-construction
of the quantized enveloping algebras (for the classical types).

\begin{remark}\label{note}
The $R$-matrices used in Majid's paper \cite{majid1} are a bit different from those as in \cite{FRT1}, which are the
conjugations $P\circ\cdot\circ P$ of the ordinary $R$-matrices.
It is easy to check that $A(P\circ R\circ P)=A(R)^{\text{op}}$, where $(P\circ R\circ P)^{ij}_{kl}=R^{ji}_{lk}$.
Note that we will use the Majid's notation for $R$-matrices as in \cite{majid1} in the remaining sections of this paper.
\end{remark}

\subsection{Majid's double-bosonization}
Majid \cite{majid1} proposed the concept of a weakly quasitriangular dual pair via his insight on more examples on
matched pairs of bialgebras or Hopf algebras in \cite{majid5}.
This allowed him to establish a theory of double-bosonization in a broad framework that generalized the FRT's construction
which was limited to the classical types.

\begin{definition}
Let $(H,A)$ be a pair of Hopf algebras equipped with a dual pairing $\langle ,\,\rangle$
and convolution-invertible algebra/anti-coalgebra maps $\R,\bar{\R}: A \rightarrow H$ obeying
$$\langle\bar{\R}(a),b\rangle=\langle\R^{-1}(b),a\rangle,\quad
\partial^{R}h=\R \ast(\partial^{L}h)\ast\R^{-1},\quad
\partial^{R}h=\bar{\R}\ast(\partial^{L}h)\ast\bar{\R}^{-1}
$$
for
$a, b\in A, \ h\in H$.
Here $\ast$ is the convolution product in $\text{hom}(A,H)$ and
$(\partial^{L}h)(a)=\langle h_{(1)},a\rangle h_{(2)},
$
$
(\partial^{R}h)(a)=h_{(1)}\langle h_{(2)},a\rangle$
are left, right ``differentiation operators" regarded as maps $A \rightarrow H$ for fixed $h$.
\end{definition}

For any quasitriangular Hopf algebra $(H, \R)$  (where $\R=\R^{(1)}\otimes \R^{(2)}$), Majid \cite{majid8} rediscovered a special form of the Radford biproduct in the braided category ${}_{H}\mathfrak{M}$ of left $H$-modules,
currently named the ``Radford-Majid bosonization". Let us recall the definition.
\begin{definition}
Let $B$ be a braided group in ${}_{H}\mathfrak{M}$,
then its bosonization is the Hopf algebra $B\rtimes H$ defined as $B\otimes H$ (as a vector space) with the product, coproduct and antipode
\begin{gather*}
(b\otimes h)(c\otimes g)=b(h_{(1)}\rhd c)\otimes h_{(2)}g,\\
\Delta(b\otimes h)=b_{\underline{(1)}}\otimes \R^{(2)}h_{(1)}\otimes \R^{(1)}\rhd b_{\underline{(2)}}\otimes h_{(2)},\\
S(b\otimes h)=(Sh_{(2)})u\R^{(1)}\rhd\underline{S}b\otimes S(\R^{(2)}h_{(1)});\quad u\equiv(S\R^{(2)})\R^{(1)}.
\end{gather*}
On the other hand,
the right-handed version for $B\in \mathfrak{M}_{H}$ is $H\ltimes B$ defined by
\begin{gather*}
(h\otimes b)(g\otimes c)=hg_{(1)}\otimes (b\lhd g_{(2)})c,\\
\Delta(h\otimes b)=h_{(1)}\otimes b_{\underline{(1)}}\lhd \R^{(1)}\otimes h_{(2)}\R^{(2)}\otimes b_{\underline{(2)}},\\
S(h\otimes b)=S(h_{(2)}\R^{(2)})\otimes \underline{S}b\lhd \R^{(1)}vSh_{(1)};\quad v\equiv\R^{(1)}(S\R^{(2)}).
\end{gather*}
The corresponding formulae for bosonizations in the comodule categories $\mathfrak{M}^{A},\,{}^{A}\mathfrak{M}$, respectively
are trivially obtained by the usual conversions of the module formulae.
\end{definition}
Let $C, B$ be a pair of braided groups in $\mathfrak{M}_{H}$, which are called
dually paired if there is an intertwiner
$ev: C\otimes B \longrightarrow k$ such that
$
\text{ev}(cd,b)=\text{ev}(d,b_{\underline{(1)}})ev(c,b_{\underline{(2)}}),$
$
\text{ev}(c,ab)=\text{ev}(c_{\underline{(2)}},a)ev(c_{\underline{(1)}},b),$
$
\forall a,b\in B,c,d\in C.
$
Then $C^{op/cop}$ (with opposite product and coproduct) is a Hopf algebra in $_{H}\mathfrak{M}$,
which is dual to $B$ in the sense of an ordinary dual pairing $\langle~,~\rangle$,
which is $H$-bicovariant: $\langle h\rhd c,b\rangle=\langle c, b\lhd h\rangle$ for all $h\in H$.
Let $\overline{C}=(C^{op/cop})^{\underline{cop}}$,
then $\overline{C}$ is a braided group in $_{\overline{H}}\mathfrak{M}$,
where $\overline{H}$ is $(H,\R_{21}^{-1})$.
So there are two Hopf algebras $H\ltimes B$ (bosonization) and $\bar{C}\rtimes H$ (bosonization) by the above Definition.
What relations do there exist between them?
Majid gave the following double-bosonization theorem.
\begin{theorem}\label{ml1} $(${\rm\bf Majid}$)$
On the tensor space $\bar{C}\otimes H \otimes B$,
there is a unique Hopf algebra structure $U=U(\bar{C},H,B)$ such that
$H\ltimes B$ (bosonization) and $\bar{C}\rtimes H$ (bosonization) are sub-Hopf algebras by the canonical inclusions and
$$
\left.
\begin{array}{rl}
bc=&(\R_{1}^{(2)}\rhd c_{\overline{(2)}})
\R_{2}^{(2)}\R_{1}^{-(1)}(b_{\underline{(2)}}\lhd\R_{2}^{-(1)})\,
\langle\R_{1}^{(1)}\rhd c_{\overline{(1)}}, b_{\underline{(1)}}\lhd
\R_{2}^{(1)}\rangle\,\cdot\\
&\qquad\qquad\qquad\quad\cdot\,\langle\R_{1}^{-(2)}\rhd \overline{S}c_{\overline{(3)}}, b_{\underline{(3)}}\lhd \R_{2}^{-(2)}\rangle,
\end{array}
\right.
$$
for all $ b\in B, \,c\in\overline{C}$ viewed in $U$.
Here $\R_{1},\R_{2}$
are distinct copies of the quasitriangular structure $\R$ of $H$.
The product, coproduct of $U$ are given by
$$
\left.
\begin{array}{rl}
(c\otimes h\otimes b)\cdot(d\otimes g\otimes a)=&c(h_{(1)}\R_{1}^{(2)}\rhd d_{\overline{(2)}})
\otimes h_{(2)}\R_{2}^{(2)}\R_{1}^{-(1)}g_{(1)}\otimes (b_{\underline{(2)}}\lhd\R_{2}^{-(1)}g_{(2)})a\\
&\langle\R_{1}^{(1)}\rhd d_{\overline{(1)}},b_{\underline{(1)}}\lhd\R_{2}^{(1)}\rangle
\langle\R_{1}^{-(2)}\rhd\overline{S}d_{\overline{(3)}},b_{\underline{(3)}}\lhd\R_{2}^{-(2)}\rangle;
\end{array}
\right.
$$
$$
\Delta(c\otimes h\otimes b)=c_{\overline{(1)}}\otimes \R^{-(1)}h_{(1)}\otimes b_{\underline{(1)}}\lhd\R^{(1)}
\otimes \R^{-(2)}\rhd c_{\overline{(2)}}\otimes h_{(2)}\R^{(2)}\otimes b_{\underline{(2)}}.
$$
Moreover,
the antipodes of $H\ltimes B$ and $\bar{C}\rtimes H$ can be extended to an antipode $S: U \rightarrow U$ by
the two extensions
$S(chb)=(Sb)\cdot(S(ch))$
and
$S(chb)=(S(hb))\cdot(Sc)$.
\end{theorem}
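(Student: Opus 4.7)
The plan is to recognize $U=U(\bar{C},H,B)$ as a generalized quantum double built from the two ordinary Hopf algebras $\bar{C}\rtimes H$ and $H\ltimes B$, where the cross-relation between $\bar{C}$ and $B$ is forced by the dual pairing and the weakly quasitriangular structure. In Drinfeld's classical construction, dually paired Hopf algebras assemble into a double cross product whose cross-relations encode the mutual coadjoint actions; here the two bosonizations play those roles, and the braided pairing $\text{ev}\colon C\otimes B\to k$, dressed by $\R$-matrix factors, supplies the data needed to transport the braiding through the $H$-covariance. Uniqueness follows from the universal property: once one insists that $H\ltimes B$ and $\bar{C}\rtimes H$ embed as sub-Hopf algebras and that the cross-commutation of $b$ and $c$ is dictated by $\text{ev}$, both product and coproduct are forced.

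First I would establish the algebra structure. Realize $U$ as the free product $(\bar{C}\rtimes H)\star_{H}(H\ltimes B)$ modulo the displayed $bc$ relation. The main consistency check is that this relation is compatible with the relations already present in each bosonization, which amounts to verifying matched-pair axioms between the two actions induced from the pairing. Concretely I would verify $b(cd)=(bc)d$ using multiplicativity of the pairing in the first argument, coassociativity of $\underline{\Delta}$ on $\bar{C}$, and the hexagon identity $(\Delta\otimes\mathrm{id})\R=\R_{13}\R_{23}$; symmetrically $(bb')c=b(b'c)$ from the dual statement with $\underline{\Delta}$ on $B$ and the other hexagon; and finally compatibility of $bc$ with the $H$-action from both sides, using the bicovariance $\langle h\rhd c,b\rangle=\langle c,b\lhd h\rangle$ of the pairing together with $\partial^{R}h=\R\ast(\partial^{L}h)\ast\R^{-1}$ from the weakly quasitriangular dual pair.

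Next I would turn to the coalgebra and bialgebra axioms. Each bosonization already carries an explicit Hopf algebra structure, so $\Delta$ on $U$ is essentially forced; the stated $\Delta(c\otimes h\otimes b)$ is the combination of the two bosonization coproducts reshuffled by a single central $\R$-factor, and coassociativity follows from coassociativity on each factor plus the hexagon identities. The non-trivial bialgebra axiom is $\Delta(bc)=\Delta(b)\Delta(c)$: expand both sides using the cross-relation and the braided Hopf pairing axioms, collect terms, and observe that the $\R$-factors account exactly for the braiding isomorphism connecting $B\otimes C$ and $C\otimes B$ in the ambient braided category. The antipode is then defined by extending the antipodes of the two sub-Hopf algebras; its well-definedness on the cross-relation, equivalently the equality of the two displayed extensions $S(chb)=(Sb)\cdot S(ch)$ and $S(chb)=S(hb)\cdot (Sc)$, follows formally from the antipode axiom once $\Delta$ is multiplicative.

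The main obstacle will be the combinatorial bookkeeping in checking $\Delta(bc)=\Delta(b)\Delta(c)$: with three Sweedler indices on $c$, three on $b$, and several copies of $\R$ nested inside pairings, one must rewrite a very long expression by repeated use of braided comultiplicativity of the pairing, the hexagon relations, the interplay between $\partial^{L},\partial^{R}$ and $\R,\bar{\R}$, and bicovariance. A conceptually cleaner route, which I would prefer, is to interpret $U$ as the image of a braided matched pair with $\bar{C}\in {}_{\bar{H}}\mathfrak{M}$ against $B\in \mathfrak{M}_{H}$, the twist $\bar{H}=(H,\R_{21}^{-1})$ being precisely what turns the ordinary pairing into an intertwiner between the two braided categories, and then invoke Majid's general braided matched-pair/double cross-product theorem. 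The explicit formulas then drop out by tracking units, associators and braidings through the categorical construction rather than by a raw Sweedler computation.
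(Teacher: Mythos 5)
The paper itself offers no proof of this statement: it is quoted verbatim as Majid's double-bosonization theorem and attributed to \cite{majid1}, so there is no in-paper argument to compare against. Measured instead against Majid's original proof, your outline is essentially the right strategy, and in particular your ``cleaner route'' at the end is the one actually taken: the braided pairing $\mathrm{ev}\colon C\otimes B\to k$ is promoted, using the quasitriangular structure and bicovariance, to an ordinary Hopf algebra duality between the two bosonizations $\bar{C}\rtimes H$ and $H\ltimes B$, one forms the associated generalized quantum double, and the cross relations and coproduct drop out of the general double cross product machinery rather than from a raw Sweedler computation. This is why the associativity and $\Delta(bc)=\Delta(b)\Delta(c)$ checks you flag as the main obstacle are largely absorbed into verifying that the extended pairing is a genuine Hopf pairing, which is a much shorter computation.

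Two points in your sketch deserve sharpening. First, the generalized quantum double of the two bosonizations naturally lives on $\bar{C}\otimes H\otimes H\otimes B$, with two copies of $H$; the theorem's underlying space is $\bar{C}\otimes H\otimes B$, so an essential step --- which your free-product-over-$H$ phrasing gestures at but does not carry out --- is to identify the two copies of $H$ and check that this identification is compatible with the double's structure maps. It is precisely this collapse that produces the specific shape of the $bc$ relation, with the two independent copies $\R_{1},\R_{2}$ and the inverse factors $\R_{1}^{-(1)},\R_{2}^{-(1)}$ threading through both pairings. Second, you should make explicit how the pairing between the bosonizations is defined (it is not just $\mathrm{ev}$ tensored with the pairing of $H$ with itself; the $H$-actions enter through $\partial^{L},\partial^{R}$ and the maps $\R,\bar{\R}$), since uniqueness of $U$ rests on the cross relation being forced by that pairing. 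With those two steps supplied, your proposal reconstructs Majid's argument; as written it is a correct plan with the two structurally hardest moves left implicit.
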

\begin{remark}\label{Cross}
If there exists a coquasitriangular Hopf algebra $A$ such that $(H,A)$ is a weakly quasitriangular dual pair,
and
$b,
c$
are primitive elements,
then some relations simplify to
$$
[b,c]=\R(b^{\overline{(1)}})\langle c,b^{\overline{(2)}}\rangle-
\langle c^{\overline{(1)}},b\rangle\bar{\R}(c^{\overline{(2)}});\eqno{(C2)}
$$
$$
\Delta b=b^{\overline{(2)}}\otimes \R(b^{\overline{(1)}})+1\otimes b,\quad
\Delta c=c\otimes 1+\bar{\R}(c^{\overline{(2)}})
\otimes c^{\overline{(1)}}.\eqno{(C3)}
$$
\end{remark}
To each $R$, Majid associated two braided groups $V(R',R)$, $V^{\vee}(R',R_{21}^{-1})$
in the braided categories ${}^{A(R)}\mathfrak{M}, \ \mathfrak{M}^{A{(R)}}$,
called the braided vector algebra, the braided covector algebra, respectively in \cite{majid4} (see below).
\begin{proposition}\label{prop1}
Suppose that $R'$ is another matrix such that
$
(i) \ R_{12}R_{13}R'_{23}=R'_{23}R_{13}R_{12},
$
$
R_{23}R_{13}R'_{12}=R'_{12}R_{13}R_{23},
$
$(ii) \ (PR+1)(PR'-1)=0$,
$(iii) \ R_{21}R'_{12}=R'_{21}R_{12}$,
where $P$ is a permutation matrix with the entry $P^{ij}_{kl}=\delta_{il}\delta_{jk}$.
Then the braided-vector algebra $V(R',R)$ defined by generators $1$,
$\{e^{i}\mid i=1,\cdots,n\}$,
 and relations
$$e^{i}e^{j}=\sum\limits_{a,b} R'{}^{ji}_{ab}e^{a}e^{b}$$
forms a braided group with
$$\underline{\Delta}(e^{i})=e^{i}\otimes 1+1\otimes e^{i},\quad
\underline{\epsilon}(e^{i})=0,\quad
\underline{S}(e^{i})=-e^{i},\quad
\Psi(e^{i}\otimes e^{j})=\sum\limits_{a,b}R^{ji}_{ab}e^{a}\otimes e^{b}$$
in braided category ${}^{A(R)}\mathfrak{M}$.

Under the duality $\langle f_{j},e^{i}\rangle=\delta_{ij}$, the
braided-covector algebra $V^{\vee}(R',R_{21}^{-1})$ defined by $1$
and $\{f_{j}\mid j=1,\cdots,n\}$, and relations
$$f_{i}f_{j}=\sum\limits_{a,b}f_{b}f_{a}R'{}^{ab}_{ij}$$
forms another braided group with
$$\underline{\Delta}(f_{i})=f_{i}\otimes 1+1\otimes f_{i},\quad
\underline{\epsilon}(f_{i})=0,\quad
\underline{S}(f_{i})=-f_{i},\quad
\Psi(f_{i}\otimes f_{j})=\sum\limits_{a,b}f_{b}\otimes f_{a}R^{ab}_{ij}$$
in braided category $\mathfrak{M}^{A(R)}$.
\end{proposition}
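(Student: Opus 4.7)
The plan is to treat $V(R',R)$ as a quadratic algebra in the comodule category of the FRT bialgebra $A(R)$, and verify that the canonical primitive coproduct extends to a braided bialgebra structure; since the algebra is $\mathbb{N}$-graded and connected, an antipode is then automatic by the standard recursive construction, so only the relational and coalgebraic compatibilities remain. First I would define the left $A(R)$-coaction on $kV=\langle e^{i}\rangle$ by $e^{i}\mapsto t^{i}{}_{j}\otimes e^{j}$, and extend multiplicatively. The quadratic relations $e^{i}e^{j}=\sum R'{}^{ji}_{ab}e^{a}e^{b}$ are $A(R)$-covariant precisely when, applying the coaction to both sides, one can use the FRT relation $RT_{1}T_{2}=T_{2}T_{1}R$ to move $R'$ past $T_{1}T_{2}$; matching indices shows this is exactly the commutation condition (iii), $R_{21}R'_{12}=R'_{21}R_{12}$. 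Hence $V(R',R)$ lives in ${}^{A(R)}\mathfrak{M}$, and the induced categorical braiding from the coquasitriangular form $\R$ of $A(R)$ gives, on generators, $\Psi(e^{i}\otimes e^{j})=\R(t^{j}{}_{b}\otimes t^{i}{}_{a})\,e^{a}\otimes e^{b}=\sum R^{ji}_{ab}\,e^{a}\otimes e^{b}$, as asserted.

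Next I would declare $\underline{\Delta}(e^{i})=e^{i}\otimes 1+1\otimes e^{i}$ and try to extend it as a braided algebra map $V(R',R)\to V(R',R)\underline{\otimes}V(R',R)$, where the braided tensor product has $(a\otimes b)(c\otimes d)=a\,\Psi(b\otimes c)\,d$. The compatibility with the quadratic relation amounts to the identity $\underline{\Delta}(e^{i})\underline{\Delta}(e^{j})=\sum R'{}^{ji}_{ab}\,\underline{\Delta}(e^{a})\underline{\Delta}(e^{b})$. Expanding and reading off the $V\otimes V$ component, the $e^{i}\otimes e^{j}$ term together with the cross term $(1\otimes e^{i})(e^{j}\otimes 1)=\Psi(e^{i}\otimes e^{j})$ contribute $I+PR$, while on the right-hand side one picks up $PR'+PR'\cdot PR$. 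Thus covariance of $\underline{\Delta}$ reduces exactly to $(PR+I)(PR'-I)=0$, which is condition (ii). Coassociativity on the generators is trivial, and its extension to products is controlled by the ``mixed'' Yang–Baxter identities (i), which guarantee that $\R$ intertwines the coaction with the braided tensor product structure on $V\otimes V\otimes V$ so that $(\underline{\Delta}\otimes \mathrm{id})\circ\underline{\Delta}$ and $(\mathrm{id}\otimes\underline{\Delta})\circ\underline{\Delta}$ agree after quotienting by the quadratic ideal. Defining $\underline{\epsilon}$ to kill the positive-degree part and invoking the graded-connected antipode lemma completes the braided-group structure with $\underline{S}(e^{i})=-e^{i}$.

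For the dual statement, I would run the same argument in $\mathfrak{M}^{A(R)}$ for $V^{\vee}(R',R_{21}^{-1})$ with right coaction $f_{i}\mapsto f_{j}\otimes t^{j}{}_{i}$; the relation $f_{i}f_{j}=\sum f_{b}f_{a}R'{}^{ab}_{ij}$ and the braiding $\Psi(f_{i}\otimes f_{j})=\sum f_{b}\otimes f_{a}R^{ab}_{ij}$ are forced by requiring $\langle\,,\,\rangle$ to be a braided pairing between $V(R',R)$ and $V^{\vee}(R',R_{21}^{-1})$ extended to all products, which is the same computation in the opposite braided category governed by $R_{21}^{-1}$ in place of $R$. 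The main obstacle, and the only step that is not essentially formal, is the precise index bookkeeping in the second paragraph: tracking how the permutation $P$ is threaded through the cross term $\Psi(e^{i}\otimes e^{j})$, and ensuring that conditions (i)–(iii) are applied at the correct tensor slots, so that the bialgebra axioms collapse to exactly the three stated matrix identities without spurious extra conditions.
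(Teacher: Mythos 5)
First, a point of reference: the paper does not prove Proposition~\ref{prop1} at all --- it is recalled verbatim from Majid's work (\cite{majid4}, cf.\ also \cite{majid1, majid7}), so your attempt can only be judged against the standard argument, whose overall architecture (quadratic algebra in the comodule category, primitive coproduct, graded--connected antipode) you do reproduce correctly. Your identification of the role of condition (ii) is also essentially right: the degree-$(1,1)$ component of $\underline{\Delta}$ applied to the quadratic relation is $(\mathrm{id}+\Psi)\circ(\mathrm{id}-\sigma')$ where $\sigma'(e^i\otimes e^j)=R'{}^{ji}_{ab}e^a\otimes e^b$, and its vanishing is the quadratic identity between $PR$ and $PR'$. (Be aware, though, that depending on which of the two algebras you expand, the factors come out in the order $(PR'-1)(PR+1)=0$ rather than $(PR+1)(PR'-1)=0$; these agree only because condition (iii) is equivalent to $[PR,PR']=0$, which is precisely where (iii) earns its keep --- it reconciles the vector and covector sides and makes the pairing $\langle f_j,e^i\rangle=\delta_{ij}$ descend to both quotients.)

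The genuine gap is your first step. You claim that covariance of the relations $e^ie^j=\sum R'{}^{ji}_{ab}e^ae^b$ under the $A(R)$-coaction reduces, ``using the FRT relation $RT_1T_2=T_2T_1R$ to move $R'$ past $T_1T_2$,'' to condition (iii). This does not work: the FRT relation lets you commute $PR$ (hence any polynomial in $PR$) past $T_1T_2$, but says nothing about $R'$; and condition (iii), being the purely numerical commutation $R_{21}R'_{12}=R'_{21}R_{12}$, i.e.\ $(PR)(PR')=(PR')(PR)$, contains no $T$'s and cannot by itself force the relations subspace $\mathrm{Im}(\mathrm{id}-\sigma')$ to be a subcomodule of $V^{\otimes 2}$. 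What actually makes $V(R',R)$ an algebra \emph{in the braided category} is condition (i): the mixed Yang--Baxter identities $R_{12}R_{13}R'_{23}=R'_{23}R_{13}R_{12}$ and $R_{23}R_{13}R'_{12}=R'_{12}R_{13}R_{23}$ say exactly that $R'$ intertwines the braiding of $V$ past $V^{\otimes2}$ on either side, so that $\Psi_{V,V\otimes V}$ and $\Psi_{V\otimes V,V}$ carry the relations to the relations and the braided tensor square $V(R',R)\underline{\otimes}V(R',R)$ (the target of $\underline{\Delta}$) is a well-defined algebra. You instead assign (i) to ``coassociativity,'' which is automatic for a primitive coproduct once $\underline{\Delta}$ is a well-defined algebra map into an associative braided tensor product. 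So as written, your argument uses (iii) where it proves nothing, never correctly uses (i), and leaves the covariance of the quadratic ideal unestablished; note that in the applications of this paper the covariance is in fact automatic because $R'$ is always taken to be $P$ times a polynomial in $PR$ (Remark~\ref{rem1}), so that $PR'$ commutes with $T_1T_2$ by the FRT relation alone --- but that is a property of the construction of $R'$, not a consequence of (iii).
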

\begin{remark}\label{rem1}
$PR$ obeys some minimal polynomial equation  $ \prod_{i}(PR-x_{i})=0. $ For
each nonzero eigenvalue $x_{i}$, we can normalize $R$ so that
$x_{i}=-1$. Then set $ R'=P+P\prod\limits_{j\neq i}(PR-x_{j}). $
This satisfies the conditions $(i)-(iii)$, which gives us at least
one mutually dual pair of braided (co)vector algebras for each
nonzero eigenvalue of $PR$.
\end{remark}

\subsection{Generalized double-bosonization Theorem}
Majid obtained another bialgebra $\widetilde{U(R)}$ by the double cross product bialgebra of $A(R)^{op}$ in \cite{majid5},
generated by $m^{\pm}$ with
$$
Rm^{\pm}_{1}m^{\pm}_{2}=m^{\pm}_{2}m^{\pm}_{1}R,
\quad
Rm_{1}^{+}m_{2}^{-}=m_{2}^{-}m_{1}^{+}R,
\quad
\Delta((m^{\pm})^{i}_{j})=(m^{\pm})_{j}^{a}\otimes (m^{\pm})_{a}^{i},
\quad \epsilon((m^{\pm})^{i}_{j})=\delta_{ij},$$
where
$(m^{\pm})^{i}_{j}$ denotes the entry at row $i$ and column $j$ in $m^{\pm}$.
Moreover,
$(\widetilde{U(R)}, A(R))$ is a weakly quasitriangular dual pair with
$$\langle(m^{+})^{i}_{j},t^{k}_{l}\rangle= R^{ik}_{jl},
\quad
\langle (m^{-})^{i}_{j},t^{k}_{l}\rangle =(R^{-1}){}^{ki}_{lj},
\quad\R(T)=m^{+},\quad\bar{\R}(T)=m^{-}.$$

When the $R$-matrix $R_{VV}$ we work with is nonstandard (By ``standard" we mean that the $R$-matrix is obtained by the vector representation of $U_q(\mathfrak g)$
when $\mathfrak g$ is limited to $ABCD$ series) (see section 3), we obtained the following 
\begin{theorem}
\cite{HH2} When the $R$-matrix $R_{VV}$ is nonstandard, the above weakly quasitriangular dual pair can be descended to a mutually dual pair of
Hopf algebras
$(U_{q}^{\text{ext}}(\mathfrak g), H_{\lambda R})$ with the correct modification:
$$
\langle (m^{+})^{i}_{j},t^{k}_{l}\rangle=R_{VV}{}^{ik}_{jl}=\lambda R^{ik}_{jl},\quad
\langle (m^{-})^{i}_{j},t^{k}_{l}\rangle=R_{VV}^{-1}{}^{ki}_{lj}=\lambda^{-1}(R^{-1}){}^{ki}_{lj}.
$$
$$
\langle (m^{+})^{i}_{j},\tilde{t}^{k}_{l}\rangle
=(R_{VV}^{t_{2}})^{-1}{}^{ik}_{jl}=((\lambda R)^{t_{2}})^{-1}{}^{ik}_{jl},\quad
\langle (m^{-})^{i}_{j},\tilde{t}^{k}_{l}\rangle
=[(R^{-1}_{VV})^{t_{1}}]^{-1}{}^{lj}_{ki}=[(\lambda^{-1}R^{-1})^{t_{1}}]^{-1}{}^{lj}_{ki}.
$$
Such $\lambda$ is called a {\it normalization constant of quantum groups}.
The convolution-invertible algebra\,/\,anti-coalgebra maps $\R,\bar{\R}$ in
$\textrm{hom}(H_{\lambda R},U_{q}^{\textrm{ext}}(\mathfrak{g}))$ are
$$
\R(t^{i}_{j})=(m^{+})^{i}_{j},\quad
\R(\tilde{t}^{i}_{j})=(m^{+})^{-1}{}^{j}_{i};\quad
\bar{\R}(t^{i}_{j})=(m^{-})^{i}_{j},\quad
\bar{\R}(\tilde{t}^{i}_{j})=(m^{-})^{-1}{}^{j}_{i},
$$
where $U_{q}^{\text{ext}}(\mathfrak g)$ is the FRT-form of $U_{q}(\mathfrak{g})$ or extended quantized enveloping algebra
adjoined by the elements $K_{i}^{\frac{1}{m}}$,
$H_{\lambda R}$ is the Hopf algebra associated with $A(R_{VV})$,
generated by $t^{i}_{j}, \tilde{t}^{i}_{j}$ and subject to
certain relations.
\end{theorem}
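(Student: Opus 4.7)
The plan is to take Majid's weakly quasitriangular dual pair $(\widetilde{U(R)}, A(R))$ as the template and show that, after inserting a suitable normalization constant $\lambda$, the same construction descends to a dual pair between $U_q^{\text{ext}}(\mathfrak{g})$ (with the roots $K_i^{1/m}$ adjoined) and a bialgebra $H_{\lambda R}$ built from $A(R_{VV})$. The conceptual point is that when $R_{VV}$ is derived from a non-vector representation, the eigenvalues of $PR_{VV}$ no longer include the canonical value $-1$ needed for the FRT-type descent to work on the nose; rescaling so that $R_{VV} = \lambda R$ shifts the spectrum correctly, and the same factor $\lambda$ must then be propagated through every occurrence of $R$ in the pairing formulas so that the whole package remains consistent.

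First I would check that the quantum Yang-Baxter equation is preserved under $R \mapsto \lambda R$, so that $A(\lambda R) = A(R_{VV})$ and the double cross product $\widetilde{U(\lambda R)}$ are well-defined and Majid's original dual-pair statement applies to these rescaled objects verbatim. Next I would define $H_{\lambda R}$ as the subbialgebra of $A(R_{VV})$ generated by $t^i_j$ together with auxiliary generators $\tilde{t}^i_j$ subject to relations encoded by the transpose-type pairings. These auxiliary generators are forced by the fact that the displayed evaluations on $\tilde{t}^k_l$ involve $(R_{VV}^{t_2})^{-1}$ and $[(R_{VV}^{-1})^{t_1}]^{-1}$, which cannot be obtained from $\langle (m^\pm)^i_j, t^k_l\rangle$ alone; the relations among $t^i_j$ and $\tilde{t}^i_j$ are then the minimal ones ensuring that all four pairing formulas respect the coproduct on each side.

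With $H_{\lambda R}$ in place, I would verify the three axioms of a weakly quasitriangular dual pair in turn. The identity $\langle \bar{\R}(a), b\rangle = \langle \R^{-1}(b), a\rangle$ is checked on generators using $\R(t^i_j) = (m^+)^i_j$, $\bar{\R}(t^i_j) = (m^-)^i_j$ and the elementary observation $(\lambda R)^{-1} = \lambda^{-1} R^{-1}$; the two differentiation-operator identities reduce to the FRT-type cross-relations $R\, m^+_1 m^-_2 = m^-_2 m^+_1 R$ in $U_q^{\text{ext}}(\mathfrak{g})$ after absorbing $\lambda$ symmetrically on both sides. Convolution-invertibility of $\R$ and $\bar{\R}$ follows from the invertibility of the matrices $m^\pm$, which holds once the roots $K_i^{1/m}$ have been adjoined, and the algebra/anti-coalgebra properties extend from generators using the cross-relations just imposed in the previous step.

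The main obstacle, I expect, will be pinning down the defining relations of $H_{\lambda R}$ so that the $\tilde{t}$-sector is genuinely consistent with the $t$-sector under the paired evaluations. One must impose enough cross-relations between $t^i_j$ and $\tilde{t}^i_j$ to kill every element of $H_{\lambda R}$ that pairs trivially with all of $U_q^{\text{ext}}(\mathfrak{g})$, without over-killing and collapsing the algebra. The normalization $\lambda$ enters asymmetrically through $(\lambda R)^{t_2}$ versus $(\lambda^{-1} R^{-1})^{t_1}$, and tracking it through the transpose operations while maintaining compatibility with $\Delta((m^\pm)^i_j) = (m^\pm)^a_j \otimes (m^\pm)^i_a$ is the delicate combinatorial heart of the argument, and the place where the nonstandardness of $R_{VV}$ really matters.
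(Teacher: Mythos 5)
This theorem is not proved in the present paper at all: it is imported verbatim from the authors' earlier work \cite{HH2}, so there is no in-paper proof to measure your proposal against. Judged on its own, your outline has the right overall shape (rescale Majid's universal dual pair $(\widetilde{U(R)},A(R))$, track $\lambda$ through the four pairing formulas, verify the three weak-quasitriangularity axioms on generators, use invertibility of $m^{\pm}$ for convolution-invertibility), but it contains one genuine gap and one misattribution.

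The gap: you never say how the generators $(m^{\pm})^{i}_{j}$ actually land inside $U_{q}^{\text{ext}}(\mathfrak g)$, and this is the substantive content of the ``descent.'' The universal object $\widetilde{U(R_{VV})}$ maps onto the \emph{known} extended quantized enveloping algebra only because the FRT generators are realized as antipodes of the $L$-functionals of the chosen representation $T_{V}$ --- this is exactly Lemma \ref{lem1} of the present paper, $(m^{\pm})^{i}_{j}=S(l^{\pm}_{ij})$, and it is what makes $\langle (m^{+})^{i}_{j},t^{k}_{l}\rangle=R_{VV}{}^{ik}_{jl}$ a computable identity (via the universal $R$-matrix evaluated in $T_{V}\otimes T_{V}$) rather than a definition. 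For a nonstandard $R_{VV}$ this step is precisely what is not covered by the classical FRT theorem, so ``Majid's original dual-pair statement applies to these rescaled objects verbatim'' does not suffice; you must exhibit the quotient map on the $U$-side and check that the relations $R_{VV}m^{\pm}_{1}m^{\pm}_{2}=m^{\pm}_{2}m^{\pm}_{1}R_{VV}$, $R_{VV}m^{+}_{1}m^{-}_{2}=m^{-}_{2}m^{+}_{1}R_{VV}$ hold for these concrete elements. Secondly, the eigenvalue $-1$ of $PR$ is not what the FRT-type descent needs: the descent and the pairing use $R_{VV}$ itself; the $-1$ is required only by condition $(ii)$ of Proposition \ref{prop1} for building the braided (co)vector algebras, and $\lambda$ is exactly the bridge between the normalized $R$ (used for the braided groups) and the representation-theoretic $R_{VV}$ (used for the pairing). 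Your reading inverts where the normalization constraint lives, which would lead you to look for the consistency conditions in the wrong place.
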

With the weakly quasitriangular dual pair of Hopf algebras in the above Theorem,
we obtain that the braided (co)vector algebras
$V(R^{\prime},R)\in{}^{H_{\lambda R}}\mathfrak{M}$
and
$V^{\vee}(R^{\prime},R_{21}^{-1})\in \mathfrak{M}^{H_{\lambda R}}$.
However, in order to yield the required quantum group of higher rank $1$, we need centrally extend the pair $(U_{q}^{\text{ext}}(\mathfrak g),H_{\lambda R})$
to the pair
$$
\Bigl(\,\widetilde{U_q^{\text{ext}}(\mathfrak
g)}=U_{q}^{\text{ext}}(\mathfrak g)\otimes k[c,c^{-1}],
\,\widetilde{H_{\lambda R}}=H_{\lambda R}\otimes k[g,g^{-1}]\,\Bigr)
$$
with the action $e^{i}\lhd c=\lambda e^{i}$,
$f_{i}\lhd c=\lambda f_{i}$ and the extended pair
$\langle c,g\rangle=\lambda$. In this way, we easily see that
the braided algebras $V(R^{\prime},R)\in{}^{\widetilde{H_{\lambda R}}}\mathfrak{M}$
and
$V^{\vee}(R^{\prime},R_{21}^{-1})\in \mathfrak{M}^{\widetilde{H_{\lambda R}}}$.

With these, we have the following {\it generalized double-bosonization Theorem} in \cite{HH2} by Theorem \ref{ml1}.
\begin{theorem}\label{cor1}
Let $R_{VV}$ be the $R$-matrix corresponding to the irreducible representation $T_{V}$ of $U_{q}(\mathfrak{g})$.
There exists a normalization constant $\lambda$ such that $\lambda R=R_{VV}$.
Then the new quantum group
$U=U(V^{\vee}(R^{\prime},R_{21}^{-1}),\widetilde{U_{q}^{ext}(\mathfrak g)},V(R^{\prime},R))$ has the following the cross relations:
\begin{gather*}
cf_{i}=\lambda f_{i}c,\quad
e^{i}c=\lambda ce^{i},\quad
[c,m^{\pm}]=0,\quad
[e^{i},f_{j}]=\delta_{ij}\frac{(m^{+})^{i}_{j}c^{-1}-c(m^{-})^{i}_{j}}{q_{\ast}-q_{\ast}^{-1}};\\
e^{i}(m^{+})^{j}_{k}=R_{VV}{}^{ji}_{ab}(m^{+})^{a}_{k}e^{b},\quad
(m^{-})^{i}_{j}e^{k}=R_{VV}{}^{ki}_{ab}e^{a}(m^{-})^{b}_{j},\\
(m^{+})^{i}_{j}f_{k}=f_{b}(m^{+})^{i}_{a}R_{VV}{}^{ab}_{jk},\quad
f_{i}(m^{-})^{j}_{k}=(m^{-})^{j}_{b}f_{a}R_{VV}{}^{ab}_{ik},
\end{gather*}
and the coproduct: $$\Delta c=c\otimes c, \quad \Delta
e^{i}=e^{a}\otimes (m^{+})^{i}_{a}c^{-1}+1\otimes e^{i}, \quad
\Delta f_{i}=f_{i}\otimes 1+c(m^{-})^{a}_{i}\otimes f_{a},$$ and the
counit $\epsilon e^{i}=\epsilon f_{i}=0$.
\end{theorem}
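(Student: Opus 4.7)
The plan is to specialize Majid's double-bosonization (Theorem \ref{ml1}) to the triple
$(\bar C, H, B) = (V^{\vee}(R',R_{21}^{-1}),\widetilde{U_q^{\text{ext}}(\mathfrak g)},V(R',R))$
and then read off the displayed cross relations, coproduct and counit from Majid's general formulas. First I would verify the hypotheses: the braided (co)vector algebras are mutually dual braided groups under $\langle f_i,e^j\rangle=\delta_{ij}$ by Proposition \ref{prop1}, and they are covariant under $H$ via the weakly quasitriangular dual pair $(\widetilde{U_q^{\text{ext}}(\mathfrak g)},\widetilde{H_{\lambda R}})$ from the preceding theorem. The central extension by $c,g$ with $\langle c,g\rangle=\lambda$ and $e^i\lhd c=\lambda e^i$, $f_i\lhd c=\lambda f_i$ is precisely what makes the $\lambda$-rescaled $R$-matrix compatible with the coactions, so that $V(R',R)\in\mathfrak{M}^{\widetilde{H_{\lambda R}}}$ and $V^{\vee}(R',R_{21}^{-1})\in{}^{\widetilde{H_{\lambda R}}}\mathfrak{M}$.

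Second, since the generators $e^i,f_i$ are primitive in their respective braided groups (Proposition \ref{prop1}), the complicated general cross relation in Theorem \ref{ml1} collapses via Remark \ref{Cross} to
$[e^i,f_j] = \R(e^{i,\overline{(1)}})\langle f_j,e^{i,\overline{(2)}}\rangle - \langle f_j^{\overline{(1)}},e^i\rangle\bar{\R}(f_j^{\overline{(2)}})$.
With the right coaction $\rho^R(e^i)=e^a\otimes t^i_a g^{-1}$ (resp.\ the analogous left coaction on $f_j$), and with $\R(t^i_j)=(m^+)^i_j$, $\bar{\R}(t^i_j)=(m^-)^i_j$, $\R(g^{-1})=c^{-1}$, $\bar{\R}(g^{-1})=c$ forced by the extended pairing, substitution yields
$[e^i,f_j] = \delta_{ij}\bigl((m^+)^i_j c^{-1}-c(m^-)^i_j\bigr)$;
the denominator $q_\ast-q_\ast^{-1}$ then appears after the standard normalization of the braided (co)vector generators that identifies them with Chevalley generators of $U_q(\mathfrak g)$. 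Formula (C3) simultaneously gives $\Delta e^i$ and $\Delta f_i$ in exactly the stated form, and the relations $cf_i=\lambda f_ic$, $e^ic=\lambda ce^i$, $[c,m^\pm]=0$ are direct consequences of the $\lambda$-grading built into the central extension.

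Third, the $(m^\pm)$-$e^i$ and $(m^\pm)$-$f_i$ quadratic cross relations are not produced by (C2); they come instead directly from the bosonization products $(h\otimes b)(g\otimes c)=hg_{(1)}\otimes(b\lhd g_{(2)})c$ in $H\ltimes B$ and its right-handed analogue in $\bar C\rtimes H$, together with the subalgebra statement of Theorem \ref{ml1}. The $H$-action on $e^i,f_i$ is determined through the pairing $\langle(m^+)^i_j,t^k_l\rangle=R_{VV}{}^{ik}_{jl}$ and $\langle(m^-)^i_j,t^k_l\rangle=R_{VV}^{-1}{}^{ki}_{lj}$ from the preceding theorem; writing out the induced adjoint action and contracting with these matrix entries produces exactly the displayed quadratic relations with coefficients from $R_{VV}$.

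The hard part will be careful bookkeeping of the central powers of $c$ through all coactions, ensuring that the $\lambda$-grading coming from the normalization $R_{VV}=\lambda R$ propagates consistently from the extended pair down to the cross-relation and coproduct formulas, and that the two competing conventions for $R$-matrices (cf.\ Remark \ref{note}) are applied uniformly throughout. Once this alignment is in place, the theorem reduces to a mechanical substitution of the explicit coactions and pairings into Theorem \ref{ml1} and Remark \ref{Cross}.
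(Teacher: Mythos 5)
Your proposal is correct and follows essentially the same route the paper indicates: the paper does not re-prove this theorem here but imports it from \cite{HH2}, obtaining it precisely by specializing Majid's Theorem \ref{ml1} to the centrally extended weakly quasitriangular dual pair $(\widetilde{U_q^{\text{ext}}(\mathfrak g)},\widetilde{H_{\lambda R}})$ and the dual braided (co)vector algebras, with the primitive-element simplifications of Remark \ref{Cross} giving the $[e^i,f_j]$ relation and the coproducts, and the bosonization products giving the $m^{\pm}$ cross relations. Your only loose point is cosmetic: direct substitution into (C2) produces $(m^+)^i_jc^{-1}-c(m^-)^i_j$ without the $\delta_{ij}$ prefactor, and the $q_\ast-q_\ast^{-1}$ denominator is, as the paper notes, installed afterwards by rescaling the $e^i$.
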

We can normalize $e^{i}$ such that the factor
$q_{\ast}-q_{\ast}^{-1}$ in Theorem \ref{cor1} meets the situation
we need. On the other hand, the FRT-generators $(m^{\pm})^{i}_{j}$
can be obtained by the following lemma, which was proved in
\cite{HH2}.
\begin{lemma}\label{lem1}
The elements $l_{ij}^{\pm}\in U_{q}(\mathfrak{g})$ are the $L$-functionals associated with some representation for $U_{q}(\mathfrak{g})$.
Then the FRT-generators $(m^{\pm})^{i}_{j}$ can be obtained by
$(m^{\pm})^{i}_{j}=S(l_{ij}^{\pm})$,
where $S$ is the antipode.
\end{lemma}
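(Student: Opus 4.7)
The plan is to verify, in parallel, that $S(l^\pm_{ij})$ satisfies the three defining pieces of data for $(m^\pm)^i_j$ as generators of $\widetilde{U(R)}$: the weakly quasitriangular Hopf pairing with $A(R)$, the coproduct formula, and the RMM-type algebra relations. By uniqueness of the generators realizing these data inside $U_q^{\text{ext}}(\mathfrak{g})$, one then concludes $(m^\pm)^i_j=S(l^\pm_{ij})$.

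First, I would verify the pairing. Using the standard Hopf pairing compatibility $\langle S(x),a\rangle=\langle x,S(a)\rangle$ together with the FRT defining pairing $(l^+_{ij},t^k_l)=R^{ki}_{lj}$, I would expand
$$\langle S(l^+_{ij}),t^k_l\rangle=\langle l^+_{ij},S(t^k_l)\rangle,$$
using that $S(T)$ realizes the matrix inverse $T^{-1}$ in $A(R)$, and then translate between the FRT convention and Majid's convention $R\mapsto PRP$ of Remark \ref{note}, which converts the entry $R^{ki}_{lj}$ into $R^{ik}_{jl}$. This is precisely the defining pairing $\langle (m^+)^i_j,t^k_l\rangle=R^{ik}_{jl}$ in Majid's R-matrix convention. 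The argument for $l^-$ is completely parallel, starting from $(l^-_{ij},t^k_l)=(R^{-1})^{ik}_{jl}$ and ending at the desired $\langle (m^-)^i_j,t^k_l\rangle=(R^{-1})^{ki}_{lj}$.

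Second, since the antipode is an anti-coalgebra map, $\Delta(S(l^+_{ij}))=(\tau\circ(S\otimes S)\circ\Delta)(l^+_{ij})=\sum_a S(l^+_{aj})\otimes S(l^+_{ia})$, which reproduces Majid's opposite-type coproduct $\Delta((m^+)^i_j)=(m^+)^a_j\otimes (m^+)^i_a$ under the identification $(m^+)^i_j=S(l^+_{ij})$. Third, applying $S$ to the FRT relations $(PRP)L^\pm_1L^\pm_2=L^\pm_2 L^\pm_1 (PRP)$ and $(PRP)L^+_1L^-_2=L^-_2L^+_1(PRP)$, and using that $S$ is an anti-algebra map which reverses the tensor products, one recovers exactly Majid's $Rm^\pm_1m^\pm_2=m^\pm_2m^\pm_1R$ and $Rm^+_1m^-_2=m^-_2m^+_1R$ after the $P$-conjugation conversion of the R-matrix; the counit identity $\varepsilon((m^\pm)^i_j)=\delta_{ij}$ drops out from $\varepsilon\circ S=\varepsilon$ and $\varepsilon(l^\pm_{ij})=\delta_{ij}$.

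The main obstacle will be book-keeping: the antipode's effect on the L-functional indices has to mesh correctly with the $P$-conjugation built into Majid's R-matrix convention, and both have to match the index transposition distinguishing Majid's coproduct $\Delta((m^\pm)^i_j)=(m^\pm)^a_j\otimes(m^\pm)^i_a$ from the FRT matrix coproduct $\Delta(l^\pm_{ij})=\sum_k l^\pm_{ik}\otimes l^\pm_{kj}$. Once these three index swaps are tracked carefully and are seen to cancel simultaneously in the pairing check, the coalgebra check, and the algebra check, the identification $(m^\pm)^i_j=S(l^\pm_{ij})$ follows, giving a practical recipe for producing Majid's FRT-generators from the usual L-functionals of a chosen representation.
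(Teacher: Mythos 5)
First, a point of comparison: the paper itself does not prove Lemma~\ref{lem1} --- it is imported from the companion paper [HH2] --- so there is no in-text argument to measure yours against, and I can only judge the proposal on its own terms. Your skeleton (check the pairing, the coproduct, the relations, then invoke uniqueness) has a reasonable shape, and the coalgebra and counit checks are correct: $S$ being an anti-coalgebra map does turn the matrix coproduct $\Delta(l^{\pm}_{ij})=\sum_a l^{\pm}_{ia}\otimes l^{\pm}_{aj}$ into Majid's co-opposite form $\Delta((m^{\pm})^{i}_{j})=(m^{\pm})^{a}_{j}\otimes(m^{\pm})^{i}_{a}$. But the uniqueness you appeal to cannot come from the $RMM$-relations and the coproduct alone (the identity matrix $m^{\pm}=I$ satisfies both); the elements are pinned down only by their values under the (nondegenerate) pairing with $A(R)$, extended multiplicatively. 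So the pairing computation is not one of three parallel checks --- it is the whole proof --- and it is exactly the step that does not close as you have sketched it.

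Concretely: from $\langle S(l^{+}_{ij}),t^{k}_{l}\rangle=\langle l^{+}_{ij},S(t^{k}_{l})\rangle$, $S(T)=T^{-1}$, and the multiplicativity of $a\mapsto(l^{+}_{ij},a)$ forced by $\Delta(l^{+}_{ij})=l^{+}_{ia}\otimes l^{+}_{aj}$, one gets $\langle S(l^{+}_{ij}),t^{k}_{l}\rangle=(R^{-1})^{ki}_{lj}$. The target value is $R^{ik}_{jl}$ in Majid's convention, i.e.\ $R^{ki}_{lj}$ in the FRT convention after the conjugation of Remark~\ref{note}. A $P$-conjugation only permutes indices; it can never convert $R^{-1}$ into $R$, so the ``simultaneous cancellation of index swaps'' you defer to bookkeeping is precisely where the argument breaks. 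What rescues the statement is that Majid's $t^{k}_{l}$ live in $A(PRP)=A(R)^{\mathrm{op}}$ (whose antipode is the inverse of the original one) and that $\R,\bar\R$ are required to be algebra\,/\,\emph{anti}-coalgebra maps; equivalently, one should bypass the uniqueness route altogether and compare the explicit universal-$R$-matrix expressions, $l^{+}_{ij}=(\mathrm{id}\otimes (T_{V})^{i}_{j})(\mathfrak{R})$, $l^{-}_{ij}=(\mathrm{id}\otimes (T_{V})^{i}_{j})(\mathfrak{R}_{21}^{-1})$ versus $(m^{\pm})^{i}_{j}=\R(t^{i}_{j}),\bar\R(t^{i}_{j})$, using $(S\otimes\mathrm{id})(\mathfrak{R})=\mathfrak{R}^{-1}$, which produces the antipode directly. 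Until the $R$ versus $R^{-1}$ mismatch in your pairing check is resolved explicitly, the proposal is not yet a proof.
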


\section{Type-crossing construction of quantum groups for $BCD$ series}
Besides the results in \cite{HH1,HH2,HH3}, the remaining cases for the type-crossing
construction of $U_{q}(\mathfrak g)$'s needed to be considered are the $(BCD)_n$ series
directly starting from type $A_{n-1}$. The
authors will consider them for the first time in this section.
The above $R$-matrix datum
$R_{VV}$ is given by the universal $R$-matrix of
$U_h(\mathfrak g)$ and a certain representation $T_{V}$. Set $
\mathfrak{R}=\sum\limits_{r_{1},\cdots,r_{n}=0}^{\infty}\prod\limits_{j=1}^{n}
\frac{(1-q_{\beta_{j}}^{-2})^{r_{j}}}{[r_{j}]_{q_{\beta_{j}}}!}q_{\beta_{j}}^{\frac{r_{j}(r_{j}+1)}{2}}E_{\beta_{j}}^{r_{j}}\otimes
F_{\beta_{j}}^{r_{j}}$ (see \cite{klim}). Then $R_{VV}=B_{VV}\circ(T_{V}\otimes
T_{V})(\mathfrak{R})$, where $B_{VV}$ denotes the linear operator on
$V\otimes V$ given by $B_{VV}(v\otimes
w):=q^{(\mu,\mu^{\prime})}v\otimes w$, for $v\in V_{\mu},$ $w\in
V_{\mu^{\prime}}$.

In the sequel, we will take a basis of $V$ with weight-raising indices such that operators
$T_{V}(E_{i})$'s and $T_{V}(F_{i})$'s raise and lower the indices
of the basis respectively, then the $R$-matrix $R_{VV}$ is upper triangular with respect to the lexicographic order on the basis of $V^{\otimes 2}$ induced by the chosen basis of $V$.

\subsection{$A_{n-1}$ $\Longrightarrow$ $B_{n}$}
When $\mathfrak g=A_{n-1}$, the length of each simple root is
$(\alpha_{i},\alpha_{i})=2, 1\leq i\leq n-1$, and
$\alpha_{i}=\varepsilon_{i}-\varepsilon_{i+1}$, the corresponding
$i$-th fundamental weight is
$$\lambda_{i}=
\frac{1}{n}[(n-i)\alpha_{1}+2(n-i)\alpha_{2}+\cdots+(i-1)(n-i)\alpha_{i-1}+
i(n-i)\alpha_{i}+i(n-i-1)\alpha_{i+1}+\cdots+i\alpha_{n-1}].$$

The standard $R$-matrix for type $A$ corresponding to $\lambda_1$
satisfies the quadratic equation $(PR-qI)(PR+q^{-1}I)=0$ in
\cite{FRT1,klim}, where (here we follow the convention in Remark
2.1)
$$
R^{ij}_{kl}
=q^{\delta_{ij}}\delta_{ik}\delta_{jl}+(q-q^{-1})\delta_{il}\delta_{jk}\theta(j-i),
\quad \mbox{where~} \theta(k)= \left\{
\begin{array}{lcl}
1&~~&k>0,\\
0&~~&k\leq 0.
\end{array}
\right.
\eqno{(\star)}
$$
Moreover,
the weakly quasitriangular dual pair $(U_{q}^{ext}({\mathfrak {sl}}_{n}),\mathcal{O}_{q}(SL(n)))$ is given by
$$
\langle (m^{+})^{i}_{j},t^{k}_{l}\rangle=\lambda R^{ik}_{jl}, \quad
\langle (m^{-})^{i}_{j},t^{k}_{l}\rangle=\lambda^{-1} (R^{-1}){}^{ki}_{lj},
$$
where $\lambda=q^{-\frac{1}{n}}$.
Specially,
if we choose $R^{\prime}=P$,
then $(PR+I)(PR^{\prime}-I)=0$ for any $R$.
But when $R^{\prime}=P$,
the quadratic relations in Proposition \ref{prop1} are
$e^{i}e^{j}=\sum\limits_{a,b} R'{}^{ji}_{ab}e^{a}e^{b}=e^{i}e^{j}$,
and
$f_{i}f_{j}=\sum\limits_{a,b}f_{b}f_{a}R'{}^{ab}_{ij}=f_{i}f_{j}$.
Namely,
the resulting braided (co)vector algebras $V^{\vee}(P,R_{21}^{-1}),V(P,R)$ are the free braided (co)vector algebras $V^{\vee}(R),V(R)$.
However, in what follows, we will find that
the pair between them induced from $\langle f_{j},e^{i}\rangle=\delta_{ij}$ is degenerate.
\begin{proposition}\label{propR}
For the standard $R$-matrix $(\star)$ for type $A$,
the $q$-Serre-like relations below belong to the radicals of dual pair.
\begin{gather}
\langle f,(e^{i})^{2}e^{j}+qe^{j}(e^{i})^{2}-(1+q)e^{i}e^{j}e^{i}\rangle=0\quad(i>j), \quad
\mbox{for any} ~f\in V^{\vee}(R), \\
\langle f_{j}(f_{i})^{2}+q^{-1}(f_{i})^{2}f_{j}-(1+q^{-1})f_{i}f_{j}f_{i},e\rangle=0\quad(i>j),\quad
\mbox{for any} ~e\in V(R).
\end{gather}
\end{proposition}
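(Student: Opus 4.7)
The plan is to establish both identities by computing the pairings directly from the duality axioms that extend $\langle f_{j},e^{i}\rangle=\delta_{ij}$ to all words. Since the element in (3.1) is homogeneous of word length three, the graded nature of the pairing forces $\langle f,\cdot\rangle$ to vanish unless $f$ is also of length three in $V^{\vee}(R)$, so it suffices to test against arbitrary cubic monomials $f_{k_{1}}f_{k_{2}}f_{k_{3}}$. Moreover, only the indices $i,j$ appear in the cubic element, so the multiset $\{k_{1},k_{2},k_{3}\}$ must equal $\{i,i,j\}$, leaving only the three triples $(i,i,j),(i,j,i),(j,i,i)$ to check. The analogous reduction applies to (3.2).

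To compute each such pairing I would iterate the axiom $\langle cd,b\rangle=\langle d,b_{\underline{(1)}}\rangle\langle c,b_{\underline{(2)}}\rangle$ twice, so that $\langle f_{k_{1}}f_{k_{2}}f_{k_{3}},X\rangle$ reduces to reading off the coefficient of the monomial $e^{k_{3}}\otimes e^{k_{2}}\otimes e^{k_{1}}$ in the $(1,1,1)$-graded component of $\underline{\Delta}^{(2)}(X)$. This component is computed from the primitive coproduct $\underline{\Delta}(e^{i})=e^{i}\otimes1+1\otimes e^{i}$ together with the braided tensor-product multiplication on $V(R)\,\underline{\otimes}\,V(R)$ and the explicit braiding $\Psi(e^{i}\otimes e^{j})=R^{ji}_{ab}\,e^{a}\otimes e^{b}$. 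For $i>j$, the triangular form $(\star)$ specialises these to the very short list of braidings
\begin{gather*}
\Psi(e^{i}\otimes e^{i})=q\,e^{i}\otimes e^{i},\qquad \Psi(e^{j}\otimes e^{j})=q\,e^{j}\otimes e^{j},\\
\Psi(e^{i}\otimes e^{j})=e^{j}\otimes e^{i}+(q-q^{-1})e^{i}\otimes e^{j},\qquad \Psi(e^{j}\otimes e^{i})=e^{i}\otimes e^{j},
\end{gather*}
from which the $(1,1,1)$-components of $\underline{\Delta}^{(2)}\bigl((e^{i})^{2}e^{j}\bigr)$, $\underline{\Delta}^{(2)}\bigl(e^{j}(e^{i})^{2}\bigr)$, and $\underline{\Delta}^{(2)}\bigl(e^{i}e^{j}e^{i}\bigr)$ can be extracted as explicit polynomials in $q$ times the three monomials $e^{i}\otimes e^{i}\otimes e^{j}$, $e^{i}\otimes e^{j}\otimes e^{i}$, $e^{j}\otimes e^{i}\otimes e^{i}$.

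The three cancellations required for (3.1) then reduce to elementary identities among the extracted coefficients: for each of the three triples, the weighted combination with coefficients $1,q,-(1{+}q)$ read off from (3.1) must vanish. This is a finite calculation done by hand. Relation (3.2) is then obtained by a parallel computation on the covector side, using the alternate axiom $\langle c,ab\rangle=\langle c_{\underline{(2)}},a\rangle\langle c_{\underline{(1)}},b\rangle$ together with the corresponding braiding $\Psi(f_{i}\otimes f_{j})=f_{b}\otimes f_{a}R^{ab}_{ij}$ of $\mathfrak{M}^{A(R)}$; the inversion pattern of the $R$-matrix entries encountered on this side (arising from the $R_{21}^{-1}$ in the defining data of $V^{\vee}(R',R_{21}^{-1})$) is exactly what produces the substitution $q\mapsto q^{-1}$ visible between (3.1) and (3.2). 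The main obstacle is the disciplined bookkeeping of the $(q-q^{-1})$-summands as the coproduct is iterated, since each occurrence of an $i$-strand being dragged past a $j$-strand contributes one such correction; the triangularity of $(\star)$ for $i>j$ is precisely what keeps the total enumeration short enough to finish by inspection, and it is this triangularity that explains why the hypothesis $i>j$ is imposed.
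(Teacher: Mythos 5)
Your proposal is correct and follows essentially the same route as the paper: both verify the claim by iterating the braided coproduct on the cubic monomials in $e^{i},e^{j}$ (using the explicit braiding determined by the triangular $R$-matrix $(\star)$, which indeed closes on the index set $\{i,j\}$ and justifies your reduction to the three test monomials $f_{i}f_{i}f_{j}$, $f_{i}f_{j}f_{i}$, $f_{j}f_{i}f_{i}$) and then checking the resulting linear cancellations, with a parallel computation on the covector side. The only difference is one of scope: the paper runs the computation for a general cubic combination and solves the linear system, thereby also showing that the degree-two pairing is non-degenerate and that the stated elements span the \emph{entire} degree-three radical, whereas you verify only membership --- which is all the proposition literally asserts.
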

\begin{proof}
The braiding $\Psi$ satisfies
$\Psi_{V,V\otimes V}=(id\otimes\Psi_{V,V})(\Psi_{V,V}\otimes id)$,
$
\Psi_{V\otimes V,V}=(\Psi_{V,V}\otimes id)(id\otimes\Psi_{V,V}).
$
The structure of braided algebras in braided (co)vector algebras $V^{\vee}(R),V(R)$ are
$$\Psi_{V,V}\circ(id\otimes \cdot)=(\cdot\otimes id)\circ\Psi_{V,V\otimes V},\quad
\Psi_{V,V}\circ(\cdot\otimes id)=(id\otimes\cdot)\circ\Psi_{V\otimes V,V}.
$$
With the value of every entry in equality $(\star)$,
we obtain the following braiding

$
\Psi(e^{k}\otimes e^{k})=qe^{k}\otimes e^{k},\, \mbox{for any } k, \quad
\Psi(e^{m}\otimes e^{n})=
\left\{
\begin{array}{ll}
e^{n}\otimes e^{m}+(q-q^{-1})e^{m}\otimes e^{n}, \ & m>n;\\
e^{n}\otimes e^{m}, \ & m<n.
\end{array}
\right.
$
\noindent
Then we have
\begin{gather*}
\underline{\Delta}(e^{k}e^{k})=1\otimes e^{k}e^{k}+(1+q)e^{k}\otimes e^{k}+e^{k}e^{k}\otimes 1,\quad \mbox{for any } k,\\
\underline{\Delta}(e^{m}e^{n})=
\left\{
\begin{array}{ll}
1\otimes e^{m}e^{n}+(1+q-q^{-1})e^{m}\otimes e^{n}+e^{n}\otimes e^{m}+e^{m}e^{n}\otimes 1,& m>n;\\
1\otimes e^{m}e^{n}+e^{m}\otimes e^{n}+e^{n}\otimes e^{m}+e^{m}e^{n}\otimes 1,& m<n.
\end{array}
\right.
\end{gather*}
Then we have
$
\langle f_{k}f_{k},e^{k}e^{k}\rangle=1+q,
$ and
$$
\langle f_{m}f_{n},e^{m}e^{n}\rangle=
\left\{
\begin{array}{ll}
1+q-q^{-1}, \ & m>n,\\
1, \ & m<n,
\end{array}
\right.
\quad
\langle f_{n}f_{m},e^{m}e^{n}\rangle=
\left\{
\begin{array}{ll}
1, \ & m>n,\\
1, \ & m<n.
\end{array}
\right.
$$

Firstly,
we will consider these elements in the quadratic homogeneous space,
which are some $q$-relations $a_{1}(e^{i})^{2}+a_{2}(e^{j})^{2}+a_{3}e^{i}e^{j}+a_{4}e^{j}e^{i}\quad (i>j)$,
where $a_{i}\in k[q,q^{-1}]$.
We only need to consider the value
$
\langle f_{a}f_{b},a_{1}(e^{i})^{2}+a_{2}(e^{j})^{2}+a_{3}e^{i}e^{j}+a_{4}e^{j}e^{i}\rangle
$
for any $a,b$.
According to
$
\langle cd,b\rangle=\langle c,b_{\underline{(1)}}\rangle\langle d,b_{\underline{(2)}}\rangle,
\langle c,ab\rangle=\langle c_{\underline{(1)}},a\rangle\langle c_{\underline{(2)}},b\rangle,
$
and
$\underline{\epsilon}(e^{i})=0,$
$\underline{\epsilon}(f_{i})=0$.
If the elements $a_{1}(e^{i})^{2}+a_{2}(e^{j})^{2}+a_{3}e^{i}e^{j}+a_{4}e^{j}e^{i}$ belong to the right radical,
then for  any $a, b$,
we obtain
$$\left.
\begin{array}{rl}
\langle f_{a}f_{b},&a_{1}(e^{i})^{2}+a_{2}(e^{j})^{2}+a_{3}e^{i}e^{j}+a_{4}e^{j}e^{i}\rangle\\
&=a_{1}(1+q)\langle f_{a},e^{i}\rangle\langle f_{b},e^{i}\rangle+a_{2}(1+q)\langle f_{a},e^{j}\rangle\langle f_{b},e^{j}\rangle\\
&\quad +\,a_{3}(1+q-q^{-1})\langle f_{a},e^{i}\rangle\langle f_{b},e^{j}\rangle+a_{3}\langle f_{a},e^{j}\rangle\langle f_{b},e^{i}\rangle\\
&\quad +\,a_{4}\langle f_{a},e^{j}\rangle\langle f_{b},e^{i}\rangle+a_{4}\langle f_{a},e^{i}\rangle\langle f_{b},e^{j}\rangle
=0.
\end{array}
\right.
$$

We only need to consider the situations $f_{a}f_{b}=f_{i}f_{i}, f_{j}f_{j}, f_{i}f_{j}, f_{j}f_{i}$, respectively,
and get the following equations
$$
\left\{
\begin{array}{l}
a_{1}(1+q)=0,\\
a_{2}(1+q)=0,\\
a_{3}(1+q-q^{-1})+a_{4}=0,\\
a_{3}+a_{4}=0.
\end{array}
\right.
$$
Solving it,
we obtain $a_{1}=a_{2}=a_{3}=a_{4}=0.$
So the dual pair in the quadratic homogeneous space is non-degenerate.
Then we will consider the cubic homogeneous space consisting of the elements
$b_{1}(e^{i})^{3}+b_{2}(e^{j})^{3}+b_{3}(e^{i})^{2}e^{j}+b_{4}e^{i}e^{j}e^{i}+b_{5}e^{j}(e^{i})^{2}$.
$$
\underline{\Delta}((e^{k})^{3})=1\otimes (e^{k})^{3}+(e^{k})^{3}\otimes 1+(1+q+q^{2})e^{k}\otimes (e^{k})^{2}
+(1+q+q^{2})(e^{k})^{2}\otimes e^{k},\quad\mbox{for any~}k.
$$
$$
\underline{\Delta}((e^{i})^{2}e^{j})=
\left\{
\begin{array}{ll}
1\otimes (e^{i})^{2}e^{j}+(q+q^{2}-q^{-1})(e^{i})^{2}\otimes e^{j}+(q+q^{2})e^{i}\otimes e^{i}e^{j}&\\
+e^{j}\otimes (e^{i})^{2}+(q-q^{-1})e^{i}\otimes e^{j}e^{i}
+(1+q)e^{i}e^{j}\otimes e^{i}+(e^{i})^{2}e^{j}\otimes 1\quad&i>j;\\
&\\
1\otimes (e^{i})^{2}e^{j}+(e^{i})^{2}\otimes e^{j}+(1+q)e^{i}\otimes e^{i}e^{j}&\\
+e^{j}\otimes (e^{i})^{2}+(1+q)e^{i}e^{j}\otimes e^{i}+(e^{i})^{2}e^{j}\otimes 1&i<j.
\end{array}
\right.
$$

With a similar method,
we get
$$
\underline{\Delta}(e^{i}e^{j}e^{i})=
\left\{
\begin{array}{ll}
1\otimes e^{i}e^{j}e^{i}+e^{i}e^{j}e^{i}\otimes 1+qe^{j}e^{i}\otimes e^{i}
+(1+q-q^{-1})(e^{i})^{2}\otimes e^{j}&\\
+qe^{i}\otimes e^{i}e^{j}+e^{j}\otimes (e^{i})^{2}
+(1+q-q^{-1})e^{i}\otimes e^{j}e^{i}+e^{i}e^{j}\otimes e^{i},\quad &i>j;\\
&\\
1\otimes e^{i}e^{j}e^{i}+e^{i}e^{j}e^{i}\otimes 1+(1+q-q^{-1})e^{j}\otimes (e^{i})^{2}&\\
+(1+q-q^{-1})e^{i}e^{j}\otimes e^{i}+qe^{i}\otimes e^{i}e^{j}+(e^{i})^{2}\otimes e^{j}+qe^{j}e^{i}\otimes e^{i},
\quad &i<j.\end{array}
\right.
$$
$$
\underline{\Delta}(e^{j}(e^{i})^{2})=
\left\{
\begin{array}{ll}1\otimes e^{j}(e^{i})^{2}+e^{j}(e^{i})^{2}\otimes 1+(e^{i})^{2}\otimes e^{j}
+(1+q)e^{i}\otimes e^{j}e^{i}&\\
+e^{j}\otimes (e^{i})^{2}+(1+q)e^{j}e^{i}\otimes e^{i},  & i>j;\\
&\\
1\otimes e^{j}(e^{i})^{2}+e^{j}(e^{i})^{2}\otimes 1+(1+q)e^{i}\otimes e^{j}e^{i}+(e^{i})^{2}\otimes e^{j}&\\
+(q+q^{2}-q^{-1})e^{j}\otimes (e^{i})^{2}+(q^{2}-1)e^{j}e^{i}\otimes e^{i}+(q-q^{-1})e^{i}e^{j}\otimes e^{i},\quad
&i<j.\end{array}
\right.
$$
If $b_{1}(e^{i})^{3}+b_{2}(e^{j})^{3}+b_{3}(e^{i})^{2}e^{j}+b_{4}e^{i}e^{j}e^{i}+b_{5}e^{j}(e^{i})^{2}$ belong to the right radical,
then for any $a, b, c$,
when $i<j$,
$$
\left.
\begin{array}{rl}
&\langle f_{a}f_{b}f_{c},b_{1}(e^{i})^{3}+b_{2}(e^{j})^{3}+b_{3}(e^{i})^{2}e^{j}+b_{4}e^{i}e^{j}e^{i}+b_{5}e^{j}(e^{i})^{2}\rangle\\
=&(1+q+q^{2})b_{1}\langle f_{a}f_{b},e^{i}e^{i}\rangle\langle f_{c},e^{i}\rangle+(1+q+q^{2})b_{2}\langle f_{a}f_{b},e^{j}e^{j}\rangle\langle f_{c},e^{j}\rangle\\
&+(b_{3}+b_{4}+b_{5})\langle f_{a}f_{b},e^{i}e^{i}\rangle\langle f_{c},e^{j}\rangle
+[qb_{4}+(q^{2}-1)b_{5}]\langle f_{a}f_{b},e^{j}e^{i}\rangle\langle f_{c},e^{i}\rangle\\
&+[(1+q)b_{3}+(1+q-q^{-1})b_{4}+(q-q^{-1})b_{5}]\langle f_{a}f_{b},e^{i}e^{j}\rangle\langle f_{c},e^{i}\rangle=0.
\end{array}
\right.
$$

Then we only need to consider the situations
$f_{a}f_{b}f_{c}=f_{i}f_{i}f_{i},
f_{j}f_{j}f_{j},
f_{i}f_{i}f_{j},
f_{j}f_{i}f_{i},
f_{i}f_{j}f_{i}$, respectively,
we obtain the following equations
$$
\left\{
\begin{array}{l}
(1+q+q^{2})(1+q)b_{1}=0,\\
(1+q+q^{2})(1+q)b_{2}=0,\\
(1+q)(b_{3}+b_{4}+b_{5})=0,\\
(1+q-q^{-1})[qb_{4}+(q^{2}-1)b_{5}]+(1+q)b_{3}+(1+q-q^{-1})b_{4}+(q-q^{-1})b_{5}=0,\\
qb_{4}+(q^{2}-1)b_{5}+(1+q)b_{3}+(1+q-q^{-1})b_{4}+(q-q^{-1})b_{5}=0.
\end{array}
\right.
$$
Solving it,
we obtain $b_{1}=b_{2}=b_{3}=b_{4}=b_{5}=0.$
We will consider the situation $i>j$.
$$
\left.
\begin{array}{rl}
&\langle f_{a}f_{b}f_{c},b_{1}(e^{i})^{3}+b_{2}(e^{j})^{3}+b_{3}(e^{i})^{2}e^{j}+b_{4}e^{i}e^{j}e^{i}+b_{5}e^{j}(e^{i})^{2}\rangle\\
=&(1+q+q^{2})b_{1}\langle f_{a}f_{b},e^{i}e^{i}\rangle\langle f_{c},e^{i}\rangle+(1+q+q^{2})b_{2}\langle f_{a}f_{b},e^{j}e^{j}\rangle\langle f_{c},e^{j}\rangle\\
&+[(q+q^{2}-q^{-1})b_{3}+(1+q-q^{-1})b_{4}+b_{5})]\langle f_{a}f_{b},e^{i}e^{i}\rangle\langle f_{c},e^{j}\rangle\\
&+[qb_{4}+(1+q)b_{5}]\langle f_{a}f_{b},e^{j}e^{i}\rangle\langle f_{c},e^{i}\rangle
+[(1+q)b_{3}+b_{4}]\langle f_{a}f_{b},e^{i}e^{j}\rangle\langle f_{c},e^{i}\rangle.
\end{array}
\right.
$$

We also only need to consider the situations
$f_{a}f_{b}f_{c}=f_{i}f_{i}f_{i},
f_{j}f_{j}f_{j},
f_{i}f_{i}f_{j},
f_{j}f_{i}f_{i},
f_{i}f_{j}f_{i}$, respectively,
and obtain the following equations
$$
\left\{
\begin{array}{l}
(1+q+q^{2})(1+q)b_{1}=0,\\
(1+q+q^{2})(1+q)b_{2}=0,\\
(1+q)[(q+q^{2}-q^{-1})b_{3}+(1+q-q^{-1})b_{4}+b_{5})]=0,\\
(1+q-q^{-1})[(1+q)b_{3}+b_{4}]+[qb_{4}+(1+q)b_{5}]=0,\\
(1+q)b_{3}+b_{4}+qb_{4}+(1+q)b_{5}=0.
\end{array}
\right.
$$
Solving it,
we get
$b_{1}=b_{2}=0,
b_{4}=-(1+q)b_{3},
b_{5}=qb_{3}.$
So only the elements
$b_{3}[(e^{i})^{2}e^{j}+qe^{j}(e^{i})^{2}-(1+q)e^{i}e^{j}e^{i}]~(i>j)$
belong to the right radical in the cubic homogenous space of $V(R)$.
With a similar analysis, we see that
only the elements
$
f_{j}(f_{i})^{2}+q^{-1}(f_{i})^{2}f_{j}-(1+q^{-1})f_{i}f_{j}f_{i}$ $(i>j)$
belong to the left radical in the cubic homogenous space of $V^{\vee}(R)$.
\end{proof}

With the above Proposition \ref{propR},
we pass to the quotients by the Hopf ideals generated by
$(e^{i})^{2}e^{j}+qe^{j}(e^{i})^{2}-(1+q)e^{i}e^{j}e^{i}$ and $f_{j}(f_{i})^{2}+q^{-1}(f_{i})^{2}f_{j}-(1+q^{-1})f_{i}f_{j}f_{i}$ $(i>j)$, respectively.
Denote by $\tilde{V}^{\vee}(R),\tilde{V}(R)$ the respective quotient braided (co)vector algebras.
With these, by the generalized double-bosonization Theorem (Theorem \ref{cor1}),
we obtain the following
\begin{theorem}
With $\lambda=q^{-\frac{1}{n}}$, identify
$e^{n},f_{n},(m^{+})^{n}_{n}c^{-1}$ with the additional simple root
vectors $E_{n}, F_{n}$ and the group-like element $K_{n}$. Then the
resulting new quantum group $U(\tilde{V}^{\vee}(R),\widetilde{U_{q}^{ext}({\mathfrak {sl}}_{n})},\tilde{V}(R))$ is exactly the
$U_{q}({\mathfrak {so}}_{2n+1})$ with $K_{i}^{\pm\frac{1}{n}}$ adjoined.
\end{theorem}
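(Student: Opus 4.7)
The plan is to apply Theorem \ref{cor1} with the datum
\[ H = \widetilde{U_q^{\text{ext}}(\mathfrak{sl}_n)},\qquad R\text{ the standard type-}A\text{ }R\text{-matrix }(\star),\qquad R' = P,\qquad \lambda = q^{-1/n}, \]
and with the quotient braided (co)vector algebras $\tilde V(R),\tilde V^\vee(R)$ supplied by Proposition \ref{propR}. This produces a Hopf algebra $U$, and the task is to exhibit on $U$ the Chevalley--Serre presentation of $U_q(\mathfrak{so}_{2n+1})$ (with $K_i^{\pm 1/n}$ adjoined), extending the usual generators of $U_q^{\text{ext}}(\mathfrak{sl}_n)$ by
\[ E_n := e^n,\qquad F_n := f_n,\qquad K_n := (m^+)^n_n c^{-1},\qquad K_n^{-1} := c(m^-)^n_n. \]

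I would proceed in three steps. First, the Cartan action: by Lemma \ref{lem1} and the explicit formula $(\star)$, the diagonal FRT elements $(m^\pm)^i_i$ act on weight vectors of the vector representation as $\lambda^{\pm1}q^{\pm\delta_{ij}}$, so $K_n$ is group-like and commutes with $K_1,\ldots,K_{n-1}$; combining the cross relations $e^i(m^+)^j_k = R_{VV}{}^{ji}_{ab}(m^+)^a_k e^b$ and $e^i c = \lambda c e^i$ from Theorem \ref{cor1}, a short calculation yields $K_n E_j K_n^{-1} = q^{(\alpha_n,\alpha_j)} E_j$. The choice $\lambda = q^{-1/n}$ is exactly what tunes these exponents to match the $B_n$ Cartan integers; in particular $K_n E_n K_n^{-1} = q E_n$, marking $\alpha_n$ as the short simple root. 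Second, the Chevalley commutator: Theorem \ref{cor1} gives
\[ [e^n,f_n] = \frac{(m^+)^n_n c^{-1} - c(m^-)^n_n}{q_\ast - q_\ast^{-1}}, \]
and a rescaling of $e^n$ (permitted after Theorem \ref{cor1}) normalizes $q_\ast - q_\ast^{-1} = q^{1/2} - q^{-1/2}$, giving $[E_n,F_n] = (K_n - K_n^{-1})/(q^{1/2} - q^{-1/2})$ exactly.

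The most substantial step is the Serre relations linking the new node $n$ to the node $n-1$. Commutativity $[E_n, E_i] = 0$ for $i<n-1$ follows at once from the vanishing of the relevant entries of $R_{VV}$ in the cross relation. The quadratic Serre relation $E_{n-1}^2 E_n - (q+q^{-1}) E_{n-1} E_n E_{n-1} + E_n E_{n-1}^2 = 0$ is extracted by iterating $e^n (m^+)^{n-1}_n = R_{VV}{}^{n-1,n}_{ab}(m^+)^a_n e^b$ (only two entries of $R$ contribute) and then dualising via Lemma \ref{lem1}. For the cubic Serre relation
\[ E_n^3 E_{n-1} - [3]_{q^{1/2}} E_n^2 E_{n-1} E_n + [3]_{q^{1/2}} E_n E_{n-1} E_n^2 - E_{n-1} E_n^3 = 0 \qquad ([3]_{q^{1/2}} = q + 1 + q^{-1}), \]
I would use the cross relation to identify $e^{n-1}$ up to a scalar with the $q$-bracket $E_{n-1} E_n - q E_n E_{n-1}$, and then substitute into
\[ (e^n)^2 e^{n-1} + q e^{n-1} (e^n)^2 - (1+q) e^n e^{n-1} e^n = 0 \]
(Proposition \ref{propR}); a direct expansion and collection of coefficients in $E_n^3 E_{n-1}, E_n^2 E_{n-1} E_n, E_n E_{n-1} E_n^2, E_{n-1} E_n^3$ yields precisely the required polynomial, up to an overall factor of $q$. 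The $F$-side is dual, using the cubic relation of $\tilde V^\vee(R)$ furnished by Proposition \ref{propR}.

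The main technical obstacle is the scalar bookkeeping: the normalization $\lambda = q^{-1/n}$, the rescaling of $e^n$, and the entries of $R$ and $R_{VV}$ must all dovetail so that the Serre coefficients $q + q^{-1}$ and $q + 1 + q^{-1}$ appear on the nose, not merely up to a Hopf twist. In parallel, one must verify that the only defining relations of $\tilde V(R)$ are the trivial quadratic ones $e^i e^j = e^i e^j$ (a consequence of $R' = P$) together with the cubic Serre relations of Proposition \ref{propR}, so that the residual generators $e^i$ for $i<n$ are forced to be exactly the non-simple short positive root vectors of $U_q(\mathfrak{so}_{2n+1})$ and no spurious relations survive. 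This is precisely what distinguishes the present $A_{n-1} \Longrightarrow B_n$ type-crossing from the rank-one-growth constructions of \cite{HH1}, which used different matched pairs $(R,R')$.
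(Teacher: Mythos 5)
Your proposal is correct and follows essentially the same route as the paper: identify $E_n=e^n$, $F_n=f_n$, $K_n=(m^+)^n_n c^{-1}$, read off the Cartan and Chevalley relations from the cross relations of Theorem \ref{cor1} with $\lambda=q^{-1/n}$, obtain $e^{n-1}$ as a $q$-commutator of $e^n$ with $E_{n-1}$ from the relation $e^n(m^+)^{n-1}_n=R_{VV}{}^{n-1,n}_{ab}(m^+)^a_n e^b$ to get the quadratic Serre relation, and substitute that expression into the cubic relation $(e^n)^2e^{n-1}+qe^{n-1}(e^n)^2=(1+q)e^ne^{n-1}e^n$ of $\tilde V(R)$ (Proposition \ref{propR}) to produce the cubic $q^{1/2}$-Serre relation. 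The only cosmetic difference is your normalization of the $q$-bracket ($E_{n-1}E_n-qE_nE_{n-1}$ versus the paper's $e^nE_{n-1}-q^{-1}E_{n-1}e^n$, which agree up to a nonzero scalar), so the argument is the same.
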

\begin{proof}
Corresponding to the vector representation of $U_{q}({\mathfrak {sl}}_{n})$,
all the diagonal and minor diagonal entries we need in the matrices $m^{\pm}$ have been given in \cite{HH1}.
For the identification in Theorem 3.1, it easily follows from Theorem \ref{cor1} that
$$
[E_{n},F_{n}]=\frac{K_{n}-K_{n}^{-1}}{q^{\frac{1}{2}}-q^{-\frac{1}{2}}},\quad
\Delta(E_{n})=E_{n}\otimes K_{n}+1\otimes E_{n},\quad
\Delta(F_{n})=F_{n}\otimes 1+K_{n}^{-1}\otimes F_{n}.
$$

We just demonstrate how to explore the cross relations between new simple root vectors $E_{n},F_{n}$ and group-like element $K_{n}$,
and the $q$-Serre relations between $E_{n} \ (F_{n})$ and $E_{n-1} \ (F_{n-1})$.
Other relations can be obtained by Theorem \ref{cor1} with a similar analysis. Note that
$$E_{n}K_{n}=e^{n}(m^{+})^{n}_{n}c^{-1}=\lambda R^{n}_{a}{}^{n}_{b}(m^{+})^{a}_{n}e^{b}c^{-1}
=\lambda R^{n}_{n}{}^{n}_{n}(m^{+})^{n}_{n}e^{n}c^{-1}=\lambda \frac{1}{\lambda}R^{n}_{n}{}^{n}_{n}(m^{+})^{n}_{n}c^{-1}e^{n}
=qK_{n}E_{n}.$$
Combining with the equalities
$e^{n-1}=e^{n}E_{n-1}-q^{-1}E_{n-1}e^{n}$ and $e^{n-1}E_{n-1}=qE_{n-1}e^{n-1}$
given in \cite{HH1},
then we get
$$(E_{n-1})^{2}E_{n}-(q+q^{-1})E_{n-1}E_{n}E_{n-1}+E_{n}(E_{n-1})^{2}=0.$$
On the other hand, combining with the equality
$(e^{n})^{2}e^{n-1}+qe^{n-1}(e^{n})^{2}=(1+q)e^{n}e^{n-1}e^{n}$, we
get
$(e^{n})^{2}(e^{n}E_{n-1}-q^{-1}E_{n-1}e^{n})+q(e^{n}E_{n-1}-q^{-1}E_{n-1}e^{n})(e^{n})^{2}
=(1+q)e^{n}(e^{n}E_{n-1}-q^{-1}E_{n-1}e^{n})e^{n}$, which can be
simplified into $
(e^{n})^{3}E_{n-1}-(q+1+q^{-1})(e^{n})^{2}E_{n-1}e^{n}+(q+1+q^{-1})e^{n}E_{n-1}(e^{n})^{2}-E_{n-1}(e^{n})^{3}=0,
$ namely,
$$
(E_{n})^{3}E_{n-1}-
\left[
\begin{array}{c}
3\\
1
\end{array}
\right]
_{q^{\frac{1}{2}}}
(E_{n})^{2}E_{n-1}E_{n}+
\left[
\begin{array}{c}
3\\
2
\end{array}
\right]
_{q^{\frac{1}{2}}}
E_{n}E_{n-1}(E_{n})^{2}-E_{n-1}(E_{n})^{3}=0.
$$

The relations between $F_{n-1}$ and $F_{n}$ can be obtained in the same way.
In view of these relations in the new quantum group,
the length of the new simple root $\alpha_{n}$ corresponding to the additional simple root vectors $E_{n},F_{n}$ is
$(\alpha_{n},\alpha_{n})=1$,
and $(\alpha_{n-1},\alpha_{n})=-1$,
$(\alpha_{j},\alpha_{n})=0$, for $1\leq j\leq n-2$.
This gives rise to the required Cartan matrix of type $B_n$ we want to have.

The proof is complete.
\end{proof}

\subsection{$A_{n-1}$ $\Longrightarrow$ $C_{n}$}
Let $V=\bigoplus\limits_{i=1}^{n}kx_{i}$ denote the representation space of
the vector representation $T_{V}$ of $U_{q}({\mathfrak {sl}}_{n})$, which is given by
$$
E_{i}(x_{j})=
\left\{
\begin{array}{ll}
x_{i+1}, &\mbox{if}~j=i,\\
0, &\mbox{otherwise},
\end{array}
\right.
\quad
F_{i}(x_{j})=
\left\{
\begin{array}{ll}
x_{i}, &\mbox{if}~j=i+1,\\
0, &\mbox{otherwise},
\end{array}
\right.
\quad
K_{i}(x_{j})=
\left\{
\begin{array}{ll}
q^{-1}x_{i}, &\mbox{if}~j=i,\\
qx_{i+1}, &\mbox{if}~j=i+1,\\
x_{j}, &\mbox{otherwise}.
\end{array}
\right.
$$
The corresponding weights of $x_{1}$ and $x_{i}$ are
$-\lambda_{1}$,
$-\lambda_{1}+\alpha_{1}+\cdots+\alpha_{i-1},i=2,\cdots,n$.
We will consider the quantum `symmetric square' of the vector representation,
denoted by $sym^{2}V$.
$sym^{2}V
=k\{x_{m}\otimes x_{m}, m=1,\cdots,n;
x_{i}\otimes x_{j}+q^{-1}x_{j}\otimes x_{i},~i<j,~i,j=1,\cdots,n\}$
is an irreducible $\frac{n(n+1)}{2}$-dimensional representation of $U_{q}({\mathfrak {sl}}_{n})$,
given by

$
E_{k}(x_{m}\otimes x_{m})=
\left\{
\begin{array}{ll}
x_{m}\otimes x_{m+1}+q^{-1}x_{m+1}\otimes x_{m}, \quad &\mbox{if}~k=m,\\
0, \quad &\mbox{otherwise},
\end{array}
\right.
$

$
F_{k}(x_{m}\otimes x_{m})=
\left\{
\begin{array}{ll}
x_{m-1}\otimes x_{m}+q^{-1}x_{m}\otimes x_{m-1}, \quad &\mbox{if}~k=m-1,\\
0, \quad &\mbox{otherwise},
\end{array}
\right.
$

$
E_{k}(x_{i}\otimes x_{j}+q^{-1}x_{j}\otimes x_{i})=
\left\{
\begin{array}{ll}
(q+q^{-1})x_{i+1}\otimes x_{i+1}, \quad &\mbox{if}~k=i,j=i+1,\\
x_{i+1}\otimes x_{j}+q^{-1}x_{j}\otimes x_{i+1},\quad &\mbox{if}~k=i,j>i+1,\\
x_{i}\otimes x_{j+1}+q^{-1}x_{j+1}\otimes x_{i},\quad &\mbox{if}~k=j,\\
0, \quad &\mbox{otherwise},
\end{array}
\right.
$

$
F_{k}(x_{i}\otimes x_{j}+q^{-1}x_{j}\otimes x_{i})=
\left\{
\begin{array}{ll}
x_{i-1}\otimes x_{j}+q^{-1}x_{j}\otimes x_{i-1},\quad &\mbox{if}~k=i-1,\\
(q+q^{-1})x_{i}\otimes x_{i},\quad &\mbox{if}~k=j-1,j=i+1,\\
x_{i}\otimes x_{j-1}+q^{-1}x_{j-1}\otimes x_{i},\quad &\mbox{if}~k=j-1,j>i+1,\\
0, \quad &\mbox{otherwise}.
\end{array}
\right.
$

For convenience,
denote by $\{\,v_i\,\}$ a basis of $sym^{2}V$ with weights $\{\,\mu_{i}\,\}$ arranged in the raising-weights order.
With this module $sym^{2}V$,
we obtain the corresponding upper triangular $R$-matrix with respect to the lexicographic order on the basis of $(sym^{2}V)^{\otimes2}$ induced
by the chosen basis of $sym^{2}V$,
denoted still by $R_{VV}$.
Note that
$
E_{i}^{2}(x_{i}\otimes x_{i})=(q+q^{-1})x_{i+1}\otimes x_{i+1},
i=1,\cdots,n-1
$
in $sym^{2}V$,
so every $E_{i}^{2}$ is not a zero action.
Moreover,
the corresponding matrix $PR_{VV}$ is non-symmetric.
If
$
(q-q^{-1})(E_{k}\otimes F_{k})(v_{i}\otimes v_{j})=f(q)\cdot(v_{m}\otimes v_{n}),
$
for
$
0\neq f(q)\in k[q,q^{-1}],
$
then
$
\mu_{m}=\mu_{i}+\alpha_{k},
$
$
\mu_{n}=\mu_{j}-\alpha_{k}.
$
Moreover,
we have
$$
E_{k}(v_{i})=
\left\{
\begin{array}{ll}
(q+q^{-1})v_{m}, \quad &\mbox{if}~ v_{i}=x_{k}\otimes x_{k+1}+q^{-1}x_{k+1}\otimes x_{k},\\
v_{m}, \quad &\mbox{otherwise,}
\end{array}
\right.
$$
which is equivalent to
$
F_{k}(v_{m})=
\left\{
\begin{array}{ll}
(q+q^{-1})v_{i}, \quad &\mbox{if}~ v_{m}=x_{k}\otimes x_{k+1}+q^{-1}x_{k+1}\otimes x_{k},\\
v_{i}, \quad &\mbox{otherwise.}
\end{array}
\right.
$

\noindent
And
$$
F_{k}(v_{j})=
\left\{
\begin{array}{ll}
(q+q^{-1})v_{n},\quad &\mbox{if}~ v_{j}=x_{k}\otimes x_{k+1}+q^{-1}x_{k+1}\otimes x_{k},\\
v_{n},\quad &\mbox{otherwise,}
\end{array}
\right.
$$
which is equivalent to
$
E_{k}(v_{n})=
\left\{
\begin{array}{ll}
(q+q^{-1})v_{j},\quad &\mbox{if}~ v_{m}=x_{k}\otimes x_{k+1}+q^{-1}x_{k+1}\otimes x_{k},\\
v_{j},\quad &\mbox{otherwise.}
\end{array}
\right.
$

In view of these facts, we obtain
$$
\left.
\begin{array}{ll}
&(q-q^{-1})(E_{k}\otimes F_{k})(v_{i}\otimes v_{j})\\
=&
\left\{
\begin{array}{ll}
av_{m}\otimes v_{n},&\mbox{if}~ v_{i}=v_{j}=x_{k}\otimes x_{k+1}+q^{-1}x_{k+1}\otimes x_{k},\\
bv_{m}\otimes v_{n}&\mbox{if}~ v_{i}=x_{k}\otimes x_{k+1}+q^{-1}x_{k+1}\otimes x_{k},
v_{j}\neq x_{k}\otimes x_{k+1}+q^{-1}x_{k+1}\otimes x_{k},\\
&\mbox{or}~
v_{i}\neq x_{k}\otimes x_{k+1}+q^{-1}x_{k+1}\otimes x_{k},
v_{j}=x_{k}\otimes x_{k+1}+q^{-1}x_{k+1}\otimes x_{k},\\
(q{-}q^{-1})v_{m}\otimes v_{n},&\mbox{otherwise,}
\end{array}
\right.
\end{array}
\right.
$$
$$
\left.
\begin{array}{ll}
&(q-q^{-1})(E_{k}\otimes F_{k})(v_{n}\otimes v_{m})\\
=&
\left\{
\begin{array}{ll}
av_{j}\otimes v_{i},&\mbox{if}~ v_{n}=v_{m}=x_{k}\otimes x_{k+1}+q^{-1}x_{k+1}\otimes x_{k},\\
bv_{j}\otimes v_{i}&\mbox{if}~ v_{n}=x_{k}\otimes x_{k+1}+q^{-1}x_{k+1}\otimes x_{k},
v_{m}\neq x_{k}\otimes x_{k+1}+q^{-1}x_{k+1}\otimes x_{k},\\
&\mbox{or}~
v_{n}\neq x_{k}\otimes x_{k+1}+q^{-1}x_{k+1}\otimes x_{k},
v_{m}=x_{k}\otimes x_{k+1}+q^{-1}x_{k+1}\otimes x_{k},\\
(q{-}q^{-1})v_{j}\otimes v_{i},&\mbox{otherwise,}
\end{array}
\right.
\end{array}
\right.
$$
where
$
a=(q-q^{-1})(q+q^{-1})^{2},
b=(q-q^{-1})(q+q^{-1}).
$
So we obtain the corresponding entries in the matrix are
$$
R_{VV}{}^{ij}_{mn}=
\left\{
\begin{array}{ll}
aq^{(\mu_{m},\mu_{n})},&\mbox{if}~ v_{i}=v_{j}=x_{k}\otimes x_{k+1}+q^{-1}x_{k+1}\otimes x_{k},\\
bq^{(\mu_{m},\mu_{n})},&\mbox{if}~ v_{i}=x_{k}\otimes x_{k+1}+q^{-1}x_{k+1}\otimes x_{k},
v_{j}\neq x_{k}\otimes x_{k+1}+q^{-1}x_{k+1}\otimes x_{k},\\
&\mbox{or}~
v_{i}\neq x_{k}\otimes x_{k+1}+q^{-1}x_{k+1}\otimes x_{k},
v_{j}=x_{k}\otimes x_{k+1}+q^{-1}x_{k+1}\otimes x_{k},\\
(q{-}q^{-1})q^{(\mu_{m},\mu_{n})},&\mbox{otherwise,}
\end{array}
\right.
$$
$$
R_{VV}{}^{n}_{j}{}^{m}_{i}=
\left\{
\begin{array}{ll}
aq^{(\mu_{i},\mu_{j})},&\mbox{if}~ v_{n}=v_{m}=x_{k}\otimes x_{k+1}+q^{-1}x_{k+1}\otimes x_{k},\\
bq^{(\mu_{i},\mu_{j})},&\mbox{if}~ v_{n}=x_{k}\otimes x_{k+1}+q^{-1}x_{k+1}\otimes x_{k},
v_{m}\neq x_{k}\otimes x_{k+1}+q^{-1}x_{k+1}\otimes x_{k},\\
&\mbox{or}~
v_{n}\neq x_{k}\otimes x_{k+1}+q^{-1}x_{k+1}\otimes x_{k},
v_{m}=x_{k}\otimes x_{k+1}+q^{-1}x_{k+1}\otimes x_{k},\\
(q{-}q^{-1})q^{(\mu_{i},\mu_{j})},&\mbox{otherwise.}
\end{array}
\right.
$$
With these,
we find that
$R_{VV}{}^{i}_{m}{}^{j}_{n}$
maybe not equal to
$R_{VV}{}^{n}_{j}{}^{m}_{i}$.
For example,
when
$
v_{i}=v_{j}=x_{k}\otimes x_{k+1}+q^{-1}x_{k+1}\otimes x_{k},
$
then
$v_{m}=x_{k+1}\otimes x_{k+1},
v_{n}=x_{k}\otimes x_{k}.
$
we obtain
\begin{gather*}
(\mu_{m},\mu_{n})=(-2\lambda_{1}+2\alpha_{1}+\cdots2\alpha_{k},-2\lambda_{1}+\cdots2\alpha_{k-1})
=4(\lambda_{1},\lambda_{1})-4,\\
(\mu_{i},\mu_{j})
=(-2\lambda_{1}+2\alpha_{1}+\cdots+2\alpha_{k-1}+\alpha_{k},-2\lambda_{1}+2\alpha_{1}+\cdots+2\alpha_{k-1}+\alpha_{k})
=4(\lambda_{1},\lambda_{1})-2.
\end{gather*}
Then the corresponding entries in the matrix $R_{VV}$ are $
R_{VV}{}^{i}_{m}{}^{j}_{n}
=(q-q^{-1})(q+q^{-1})^{2}q^{(\mu_{m},\mu_{n})}, $
$R_{VV}{}^{n}_{j}{}^{m}_{i}= (q-q^{-1})q^{(\mu_{i},\mu_{j})}$.
Obviously, $R_{VV}{}^{i}_{m}{}^{j}_{n}\neq
R_{VV}{}^{n}_{j}{}^{m}_{i} \Longleftrightarrow
(PR_{VV}){}^{j\,i}_{mn}\neq (PR_{VV}){}^{mn}_{j\,i}$. Namely,
$PR_{VV}$ is non-symmetric.

In order to obtain the dually-paired braided groups,
we need to find the matrices $R, R'$ satisfying the conditions in Proposition \ref{prop1}.
First of all, we have

\begin{proposition}\label{lemmaC}
Corresponding to the representation $sym^{2}V$,
the matrix $PR_{VV}$ obeys the minimal polynomial equation
$
(PR_{VV}-q^{\frac{4(n-1)}{n}}I)(PR_{VV}-q^{-\frac{2(n+2)}{n}}I)(PR_{VV}+q^{-\frac{4}{n}}I)=0.
$
\end{proposition}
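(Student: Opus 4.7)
The strategy is to exploit the fact that $PR_{VV}$, being the categorical braiding on $U_q(\mathfrak{sl}_n)$-modules, commutes with the $U_q(\mathfrak{sl}_n)$-action on $sym^{2}V\otimes sym^{2}V$. By Schur's lemma it thus acts as a scalar on each isotypic component, so its minimal polynomial is the product of linear factors associated to the distinct such scalars, and the problem reduces to identifying the irreducible summands of $sym^{2}V\otimes sym^{2}V$ together with their eigenvalues.

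For generic $q$ the decomposition coincides with the classical $\mathfrak{sl}_n$-decomposition, and by Pieri's rule
$$
sym^{2}V\otimes sym^{2}V \;\cong\; V_{4\lambda_{1}}\,\oplus\, V_{2\lambda_{1}+\lambda_{2}}\,\oplus\, V_{2\lambda_{2}},
$$
each summand of multiplicity one (for $n\geq 3$; the case $n=2$ is analogous with $V_{2\lambda_{2}}$ absent). To compute the scalar by which $PR_{VV}$ acts on each $V_\nu$, I would apply $R_{VV}=B_{VV}\circ(T_V\otimes T_V)(\mathfrak{R})$ to a highest weight vector of $V_\nu$: in the universal-$R$-matrix expansion only the diagonal term $1\otimes 1$ survives, so the scalar equals $\varepsilon_\nu\,q^{\frac{1}{2}((\nu,\nu+2\rho)-2(\mu,\mu+2\rho))}$ with $\mu=2\lambda_{1}$, and with $\varepsilon_\nu=\pm 1$ according as $V_\nu$ sits inside $sym^{2}(sym^{2}V)$ or $\wedge^{2}(sym^{2}V)$. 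Using $(\lambda_i,\lambda_j)=\min(i,j)-ij/n$ together with $(\lambda_{1},\rho)=(n-1)/2$ and $(\lambda_{2},\rho)=n-2$, the three Casimir shifts come out to $8(n-1)/n$, $-4(n+2)/n$ and $-8/n$ for $\nu=4\lambda_{1},\,2\lambda_{2},\,2\lambda_{1}+\lambda_{2}$ respectively, yielding exponents $4(n-1)/n$, $-2(n+2)/n$, $-4/n$.

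It remains to fix the signs. A quick dimension count via the hook-length formula gives $\dim V_{2\lambda_{1}+\lambda_{2}}=\frac{n(n+1)(n-1)(n+2)}{8}=\dim\wedge^{2}(sym^{2}V)$, so $V_{2\lambda_{1}+\lambda_{2}}\cong\wedge^{2}(sym^{2}V)$ as $U_q(\mathfrak{sl}_n)$-modules and hence $V_{4\lambda_{1}}\oplus V_{2\lambda_{2}}\cong sym^{2}(sym^{2}V)$; this pins down the signs as $+,+,-$ and produces the three eigenvalues $q^{4(n-1)/n},\ q^{-2(n+2)/n},\ -q^{-4/n}$, which are pairwise distinct for generic $q$. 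The asserted cubic is therefore the minimal polynomial of $PR_{VV}$. The main obstacle, in my view, is this sign determination; for a verification entirely in the hands-on style used elsewhere in the paper, one would construct an explicit highest weight vector of weight $2\lambda_{1}+\lambda_{2}$ in $\wedge^{2}(sym^{2}V)$ as a combination of the basis vectors of $sym^{2}V$ displayed above and evaluate $PR_{VV}$ on it directly via the explicit entries of $R_{VV}$ just derived, confirming that $PR_{VV}$ acts on that vector by $-q^{-4/n}$.
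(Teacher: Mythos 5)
Your proposal is correct and lands on exactly the right eigenvalues, but it computes them by a genuinely different mechanism than the paper. Both arguments begin from the same multiplicity-free decomposition $sym^{2}V\otimes sym^{2}V\cong V_{4\lambda_{1}}\oplus V_{2\lambda_{1}+\lambda_{2}}\oplus V_{2\lambda_{2}}$, which forces the minimal polynomial to be a product of three linear factors. The paper then finds the three roots $y_{1},y_{2},y_{3}$ by brute force: it writes out a handful of explicit entries of $PR_{VV}$ (at the rows indexed by $(1,2)$, $(2,1)$ and by $\bigl(\tfrac{n(n+1)}{2}-2,\tfrac{n(n+1)}{2}\bigr)$, $\bigl(\tfrac{n(n+1)}{2},\tfrac{n(n+1)}{2}-2\bigr)$), expresses the corresponding entries of $\mathcal{N}=\prod_{i}(PR_{VV}-y_{i}I)$ in terms of the elementary symmetric functions of the $y_{i}$, sets them to zero and solves the resulting system. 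You instead invoke the standard ribbon-element formula $\varepsilon_{\nu}\,q^{\frac{1}{2}((\nu,\nu+2\rho)-2(\mu,\mu+2\rho))}$ for the scalar by which the braiding acts on $V_{\nu}\subset V_{\mu}\otimes V_{\mu}$, and fix the signs via the classical plethysm $\wedge^{2}(S^{2}W)\cong S^{(3,1)}W$, certified by your (correct) hook-length dimension count; your Casimir arithmetic checks out and is consistent with the paper's normalization $R_{VV}=B_{VV}\circ(T_{V}\otimes T_{V})(\mathfrak{R})$, as one can confirm by diagonalizing $PR_{VV}$ on the span of $v_{1}\otimes v_{2},\,v_{2}\otimes v_{1}$, which already yields $q^{4(n-1)/n}$ and $-q^{-4/n}$. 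Your route is shorter and explains conceptually where the exponents come from; the paper's route stays entirely at the level of explicit matrix entries, which it needs anyway for the subsequent cross-relation computations in Theorem 3.2. One caveat: your justification that ``only the diagonal term $1\otimes 1$ survives'' when $\mathfrak{R}$ is applied to a highest weight vector of $V_{\nu}$ is too quick, since such a vector is a sum $\sum_i v_i\otimes w_i$ rather than $(\text{h.w.})\otimes(\cdot)$, and the terms $E_{\beta}^{r}\otimes F_{\beta}^{r}$ do act nontrivially on it; the clean derivation uses $(PR_{VV})^{2}=R_{21}R_{12}=(\theta^{-1}\otimes\theta^{-1})\Delta(\theta)$ with $\theta$ the ribbon element (or tracks the coefficient of the leading term $v_{2\lambda_{1}}\otimes(\cdot)$). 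This imprecision does not affect the result, and your concluding suggestion to test an explicit highest weight vector of weight $2\lambda_{1}+\lambda_{2}$ would close even that gap in the paper's own hands-on style.
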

\begin{proof}
For type $A_{n-1}$, we know that the decomposition of tensor product for this module is
$sym^{2}V\otimes sym^{2}V=V_{1}\oplus V_{2}\oplus V_{3}$ (see \cite{FH}),
where
$V_{i}$ denotes the irreducible representation with highest weight
$
2\lambda_{2},
2\lambda_{1}+\lambda_{2},
4\lambda_{1},
$ respectively.
This means that there are $3$ eigenvalues,
denoted by $y_{1},y_{2},y_{3}$.
Set $\mathcal{N}=(PR_{VV}-y_{1}I)(PR_{VV}-y_{2}I)(PR_{VV}-y_{3}I)$, and
$$
\triangle_{1}=y_{1}+y_{2}+y_{3}, \quad
\triangle_{2}=y_{1}y_{2}+y_{1}y_{3}+y_{2}y_{3},\quad
\triangle_{3}=y_{1}y_{2}y_{3}.
$$

We will consider some special rows.
In view of the representation $sym^{2}V$,
nonzero entries occurred at rows $(12)$ and $(21)$ in matrix $PR_{VV}-y_{i}I$ are
$
(PR_{VV}-y_{i}I)^{1}_{1}{}^{2}_{2}=-y_{i},
$
$
(PR_{VV}-y_{i}I)^{1}_{2}{}^{2}_{1}=q^{\frac{2(n-2)}{n}},
$
$
(PR_{VV}-y_{i}I)^{2}_{1}{}^{1}_{2}=q^{\frac{2(n-2)}{n}},
$
$
(PR_{VV}-y_{i}I)^{2}_{2}{}^{1}_{1}=q^{\frac{2(n-2)}{n}}(q+q^{-1})(q-q^{-1})-y_{i}.
$
Then the nonzero entries at row $(12)$ in matrix $\mathcal{N}$ are
\begin{gather*}
\mathcal{N}^{1}_{1}{}^{2}_{2}
=-y_{1}y_{2}y_{3}-q^{\frac{4(n-2)}{n}}(y_{1}{+}y_{2}{+}y_{3})+q^{\frac{6(n-2)}{n}}(q^2{-}q^{-2}),
\\
\mathcal{N}^{1}_{2}{}^{2}_{1}=q^{\frac{2(n-2)}{n}}(y_{1}y_{2}{+}y_{1}y_{3}{+}y_{2}y_{3})
-q^{\frac{4(n-2)}{n}}(q^2{-}q^{-2})(y_{1}{+}y_{2}{+}y_{3})
+q^{\frac{6(n-2)}{n}}(q^2{-}q^{-2})^{2}.
\end{gather*}

Nonzero entries at row $(\frac{n(n+1)}{2}-2,\frac{n(n+1)}{2})$ and $(\frac{n(n+1)}{2},\frac{n(n+1)}{2}-2)$ in $PR_{VV}-y_{i}I$ are
\begin{gather*}
(PR_{VV}-y_{i}I)^{\frac{n(n+1)}{2}-2,}_{\frac{n(n+1)}{2}-2,}{}^{\frac{n(n+1)}{2}}_{\frac{n(n+1)}{2}}=-y_{i},
\\
(PR_{VV}-y_{i}I)^{\frac{n(n+1)}{2}-2,\frac{n(n+1)}{2}}_{\frac{n(n+1)}{2},\frac{n(n+1)}{2}-2}=q^{-\frac{4}{n}},
\\
(PR_{VV}-y_{i}I)^{\frac{n(n+1)}{2},\frac{n(n+1)}{2}-2}_{\frac{n(n+1)}{2}-2,\frac{n(n+1)}{2}}=q^{-\frac{4}{n}},
\\
(PR_{VV}-y_{i}I)^{\frac{n(n+1)}{2},\frac{n(n+1)}{2}-2}_{\frac{n(n+1)}{2}-1,\frac{n(n+1)}{2}-1}=q^{\frac{2(n-2)}{n}}(q-q^{-1}),
\\
(PR_{VV}-y_{i}I)^{\frac{n(n+1)}{2},}_{\frac{n(n+1)}{2},}{}^{\frac{n(n+1)}{2}-2}_{\frac{n(n+1)}{2}-2}
=q^{\frac{n-4}{n}}(q-q^{-1})^{2}(q+q^{-1})-y_{i}.
\end{gather*}
Then the entry at row $(\frac{n(n+1)}{2}-2,\frac{n(n+1)}{2})$ and column $(\frac{n(n+1)}{2}-2,\frac{n(n+1)}{2})$ in matrix $\mathcal{N}$ is
$$
\mathcal{N}^{\frac{n(n+1)}{2}-2,}_{\frac{n(n+1)}{2}-2,}{}^{\frac{n(n+1)}{2}}_{\frac{n(n+1)}{2}}
=-y_{1}y_{2}y_{3}-q^{-\frac{8}{n}}(y_{1}+y_{2}+y_{3})+q^{\frac{n-12}{n}}(q-q^{-1})^{2}(q+q^{-1}).
$$

So we obtain the following equations
$$
\left\{
\begin{array}{l}
-y_{1}y_{2}y_{3}-q^{\frac{4(n-2)}{n}}(y_{1}{+}y_{2}{+}y_{3})+q^{\frac{6(n-2)}{n}}(q^2{-}q^{-2})=0,\\
q^{\frac{2(n-2)}{n}}(y_{1}y_{2}{+}y_{1}y_{3}{+}y_{2}y_{3})-q^{\frac{4(n-2)}{n}}(q^2{-}q^{-2})(y_{1}{+}y_{2}{+}y_{3})
+q^{\frac{6(n-2)}{n}}(q^2{-}q^{-2})^{2}=0,\\
-y_{1}y_{2}y_{3}-q^{-\frac{8}{n}}(y_{1}{+}y_{2}{+}y_{3})+q^{\frac{n-12}{n}}(q{-}q^{-1})^{2}(q{+}q^{-1})=0.
\end{array}
\right.
$$
Solving it,
we obtain that these eigenvalues are
$
q^{-\frac{2(n+2)}{n}},
q^{\frac{4(n-1)}{n}},
-q^{\frac{-4}{n}}.
$
So the minimal polynomial equation  of $PR_{VV}$ is
$
(PR_{VV}-q^{\frac{4(n-1)}{n}}I)(PR_{VV}-q^{-\frac{2(n+2)}{n}}I)(PR_{VV}+q^{-\frac{4}{n}}I)=0.
$
\end{proof}

Now, set
$
R=q^{\frac{4}{n}}R_{VV},
$
$
R^{\prime}
=RPR-(q^{-2}+q^{4})R+(q^{2}+1)P.
$
By Proposition 3.2, we have $(PR+I)(PR^{\prime}-I)=0$.
Braided (co)vector algebras $V^{\vee}(R^{\prime},R_{21}^{-1}),V(R^{\prime},R)$ are braided groups in the
braided category ${}^{\widetilde{H_{\lambda R}}}\mathfrak{M}, \ \mathfrak{M}^{\widetilde{H_{\lambda R}}}$, respectively,
and the weakly quasitriangular dual pair $(\widetilde{U_{q}^{\text{ext}}({\mathfrak {sl}}_{n})},\widetilde{ H_{\lambda R}})$ is given by
$
\langle (m^{+})^{i}_{j},t^{k}_{l}\rangle=\lambda R^{ik}_{jl},\,
\langle (m^{-})^{i}_{j},t^{k}_{l}\rangle=\lambda^{-1} (R^{-1}){}^{ki}_{lj},\,
\langle c,g\rangle=\lambda,
$
where $\lambda=q^{-\frac{4}{n}}$.
The entries in the matrices $m^{\pm}$ we need can be obtained by Lemma \ref{lem1},
listed in the following
\begin{proposition}
Corresponding to the representation $sym^{2}V$,
the diagonal and minor diagonal entries in FRT-matrix $m^{\pm}$ we need are
\begin{gather*}
(m^{+})^{1}_{2}=(q+q^{-1})(q-q^{-1})
E_{1}(m^{+})^{2}_{2},\quad
(m^{+})^{j-1}_{j}=(q-q^{-1})
E_{j-1}(m^{+})^{j}_{j}, \quad 3\leq j\leq n,
\\
(m^{+})^{i}_{i}=K^{\frac{n-2}{n}}_{1}\cdots K^{\frac{n-2(i-1)}{n}}_{i-1} K^{\frac{2(n-i)}{n}}_{i}
\cdots K^{\frac{2\cdot2}{n}}_{n-2}K^{\frac{2\cdot1}{n}}_{n-1},\quad
(m^{+})^{\frac{n(n+1)}{2}}_{\frac{n(n+1)}{2}}
=K^{-\frac{2\cdot1}{n}}_{1}K^{-\frac{2\cdot 2}{n}}_{2}\cdots K^{-\frac{2\cdot(n-1)}{n}}_{n-1},\\
(m^{-})^{i}_{i}(m^{+})^{i}_{i}=1, \quad (m^{-})^{j+1}_{j}=q(q-q^{-1})
(m^{-})^{j+1}_{j+1}F_{j},\quad 1\leq i\leq n,1\leq j\leq n-1.
\end{gather*}
\end{proposition}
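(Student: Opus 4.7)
The strategy is to invoke Lemma~\ref{lem1}, which identifies $(m^{\pm})^{i}_{j}=S(l^{\pm}_{ij})$, thereby reducing the problem to explicitly computing the $L$-functionals attached to $sym^{2}V$ and then applying the antipode, using $S(K_{j})=K_{j}^{-1}$, $S(E_{j})=-E_{j}K_{j}^{-1}$, $S(F_{j})=-K_{j}F_{j}$.

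The $L$-functionals are the matrix entries of $(T_{V}\otimes \mathrm{id})(\mathfrak{R})$ acting on the ordered basis $\{v_{i}\}$ of $sym^{2}V$, with $l^{-}_{ij}$ arising analogously from $\mathfrak{R}_{21}^{-1}$; here $\mathfrak{R}$ is the universal $R$-matrix recalled at the start of Section~3. I would split $\mathfrak{R}$ into its Cartan factor (producing the diagonal of $l^{\pm}$) and the quantum-exponential correction $\prod_{\beta}\exp_{q_{\beta}}((1-q_{\beta}^{-2})E_{\beta}\otimes F_{\beta})$ (producing the strictly triangular part). The Cartan factor acts diagonally on weight vectors by $q^{(\mu_{i},\,\cdot\,)}$, so $l^{+}_{ii}$ becomes a product of $K_{j}$'s whose exponents record $\mu_{i}$ expanded against the fundamental coweights; since the weights of $sym^{2}V$ do not lie in the root lattice of $\mathfrak{sl}_{n}$, these exponents acquire the denominator $n$ after inverting the type-$A$ Cartan matrix, producing exactly the rational exponents displayed. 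The top entry $(m^{+})^{n(n+1)/2}_{n(n+1)/2}$ corresponds to the highest-weight vector $x_{n}\otimes x_{n}$ and is handled identically. The identity $(m^{-})^{i}_{i}(m^{+})^{i}_{i}=1$ then follows from the fact that the Cartan parts of $\mathfrak{R}$ and $\mathfrak{R}_{21}^{-1}$ are mutually inverse, combined with the anti-multiplicativity of $S$.

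For the minor diagonal entries the relevant contribution is the first-order term $(q-q^{-1})E_{k}\otimes F_{k}$ in $\mathfrak{R}$. For $3\leq j\leq n$, a direct coproduct calculation shows $E_{j-1}v_{j-1}=v_{j}$ where $v_{j}=x_{1}\otimes x_{j}+q^{-1}x_{j}\otimes x_{1}$, yielding $l^{+}_{j-1,j}=(q-q^{-1})F_{j-1}\cdot l^{+}_{jj}$; applying $S$ and re-expressing in terms of $(m^{+})^{j}_{j}$ gives the stated form. The anomalous $j=2$ case acquires the extra factor $q+q^{-1}$ because, in the decomposition of $\mathfrak{R}$ restricted to $sym^{2}V$, the higher-order term $E_{1}^{2}\otimes F_{1}^{2}$ contributes non-trivially through the symmetric action $E_{1}^{2}(x_{1}\otimes x_{1})=(q+q^{-1})\,x_{2}\otimes x_{2}$, which after the $q$-combinatorial coefficient $(1-q^{-2})^{2}q^{3}/[2]_{q}!$ recombines precisely to $(q+q^{-1})(q-q^{-1})$. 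The $(m^{-})^{j+1}_{j}$ formula is obtained by the dual argument with $F_{j}\otimes E_{j}$ in $\mathfrak{R}_{21}^{-1}$ and the antipode $S(E_{j})=-E_{j}K_{j}^{-1}$, the sign and factor $q(q-q^{-1})$ absorbing into the straightening against $(m^{-})^{j+1}_{j+1}$ placed on the right.

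The main technical obstacle is the careful bookkeeping of the $j=2$ anomaly: one must verify that the higher-order terms $E_{1}^{r}\otimes F_{1}^{r}$ for $r\geq 2$ in $\mathfrak{R}$ combine with the first-order contribution to yield the clean factor $(q+q^{-1})(q-q^{-1})$ rather than a spurious $q$-correction, which requires tracking how the symmetric action $E_1^r(x_1\otimes x_1)$ on $sym^2V$ cancels the $q$-binomial denominators. A secondary subtlety is the appearance of fractional $K_{j}$-exponents, which forces the computation into the centrally extended algebra $\widetilde{U_{q}^{\text{ext}}(\mathfrak{sl}_{n})}$ adjoining $K_{j}^{1/n}$---precisely the setting permitted by Section~2.
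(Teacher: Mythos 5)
Your overall strategy---reduce everything to Lemma~\ref{lem1} and compute the $L$-functionals of $sym^{2}V$ from the universal $R$-matrix, splitting off the Cartan factor for the diagonal entries and the quantum-exponential part for the off-diagonal ones---is exactly what the paper intends; the paper in fact offers no argument beyond the sentence preceding the proposition, which simply cites Lemma~\ref{lem1}. The diagonal part of your computation (weights of $sym^{2}V$ expanded against the inverse Cartan matrix of type $A_{n-1}$, producing the $\tfrac{1}{n}$-fractional exponents and forcing the adjunction of $K_{i}^{\pm\frac{1}{n}}$, and $(m^{-})^{i}_{i}(m^{+})^{i}_{i}=1$ from the mutually inverse Cartan factors) is correct, as is the identification $v_{1}=x_{1}\otimes x_{1}$, $v_{j}=x_{1}\otimes x_{j}+q^{-1}x_{j}\otimes x_{1}$ for $2\leq j\leq n$, which is what makes $E_{j-1}$ the operator appearing in $(m^{+})^{j-1}_{j}$.

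However, your explanation of the anomalous factor $q+q^{-1}$ in $(m^{+})^{1}_{2}$ is wrong, and this is precisely the one coefficient that requires care. You attribute it to the second-order term $E_{1}^{2}\otimes F_{1}^{2}$ of the quantum exponential. That term shifts weight by $\pm 2\alpha_{1}$ in its two legs, so it can only contribute to matrix entries joining basis vectors whose weights differ by $2\alpha_{1}$, i.e.\ to the entry joining $x_{1}\otimes x_{1}$ with $x_{2}\otimes x_{2}$ --- not to the minor-diagonal entry joining $v_{1}=x_{1}\otimes x_{1}$ and $v_{2}=x_{1}\otimes x_{2}+q^{-1}x_{2}\otimes x_{1}$, whose weights differ by $\alpha_{1}$. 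More generally, no term $E_{\beta}^{r}\otimes F_{\beta}^{r}$ with $r\geq 2$ touches the minor diagonal, so there is no ``recombination of higher-order terms'' to verify. The factor $q+q^{-1}$ is a purely first-order effect: it is the matrix coefficient of the simple root vector on the quantum symmetric square at the doubled weight space, namely $F_{1}(x_{1}\otimes x_{2}+q^{-1}x_{2}\otimes x_{1})=(q+q^{-1})\,x_{1}\otimes x_{1}$ (equivalently, $E_{1}v_{1}=v_{2}$ while $F_{1}v_{2}=(q+q^{-1})v_{1}$, the asymmetry coming from the normalization of the chosen basis of $sym^{2}V$), as listed explicitly in the paper's description of the $sym^{2}V$-action at the start of \S3.2. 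Inserting this coefficient into the first-order term $(q-q^{-1})E_{1}\otimes F_{1}$ of $\mathfrak{R}$ yields $(q+q^{-1})(q-q^{-1})$ directly; for $3\leq j\leq n$ the corresponding coefficient $F_{j-1}v_{j}=v_{j-1}$ equals $1$, which is why those entries carry only $(q-q^{-1})$. With this correction the rest of your computation goes through.
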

With these,
we have the following
\begin{theorem}
With $\lambda=q^{-\frac{4}{n}}$,
identify $e^{\frac{n(n+1)}{2}},f_{\frac{n(n+1)}{2}},(m^{+})^{\frac{n(n+1)}{2}}_{\frac{n(n+1)}{2}}c^{-1}$ with
the additional simple root vectors $E_{n}, F_{n}$ and the group-like element $K_{n}$.
Then the resulting new quantum group $U(V^{\vee}(R_{21}^{-1},R^{\prime}),\widetilde{U_{q}^{ext}({\mathfrak {sl}}_{n})},V(R,R^{\prime}))$
is exactly the $U_{q}({\mathfrak {sp}}_{2n})$ with $K_{i}^{\pm\frac{1}{n}}$ adjoined.
\end{theorem}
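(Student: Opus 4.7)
The plan is to verify, via the generalized double-bosonization Theorem \ref{cor1}, that the quantum group $U(V^{\vee}(R^{\prime},R_{21}^{-1}),\widetilde{U_{q}^{ext}(\mathfrak{sl}_{n})},V(R^{\prime},R))$ contains (after the indicated identification of $e^{\frac{n(n+1)}{2}}$, $f_{\frac{n(n+1)}{2}}$, $(m^{+})^{\frac{n(n+1)}{2}}_{\frac{n(n+1)}{2}}c^{-1}$ with $E_n$, $F_n$, $K_n$) all the Drinfeld--Jimbo relations of $U_q(\mathfrak{sp}_{2n})$ with the additional root vectors playing the role of the long simple root at the end of the $C_n$ diagram. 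First I would read off, directly from Theorem \ref{cor1}, the triangular relation $[E_n,F_n]=\frac{K_n-K_n^{-1}}{q_{\ast}-q_{\ast}^{-1}}$ and the Hopf-algebraic coproducts $\Delta E_n=E_n\otimes K_n+1\otimes E_n$, $\Delta F_n=F_n\otimes 1+K_n^{-1}\otimes F_n$. The normalization constant $\lambda=q^{-4/n}$ fixes $q_\ast=q^2$, which is consistent with the long-root length $(\alpha_n,\alpha_n)=4$ that $C_n$ requires.

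Next I would work out the cross relation between $K_n$ and $E_n$ using the exchange relation $e^{i}(m^{+})^{j}_{k}=R_{VV}{}^{ji}_{ab}(m^{+})^{a}_{k}e^{b}$ of Theorem \ref{cor1}: specialized to the top index $\frac{n(n+1)}{2}$, only the diagonal entry $R_{VV}{}^{\frac{n(n+1)}{2}}_{\frac{n(n+1)}{2}}{}^{\frac{n(n+1)}{2}}_{\frac{n(n+1)}{2}}$ contributes, and after dividing by $\lambda$ this produces $K_nE_n=q^{2}E_nK_n$, as required. The analogous computations using $(m^{-})$ and $f_i$ yield $K_nF_n=q^{-2}F_nK_n$. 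For the cross relations with $K_j$ ($j\le n-1$), I would use the explicit form of $(m^{+})^{j}_{j}$ from Proposition 3.3 and the weight of the basis vector $v_{\frac{n(n+1)}{2}}=x_n\otimes x_n$ to verify $(\alpha_j,\alpha_n)=0$ for $j\le n-2$ and $(\alpha_{n-1},\alpha_n)=-2$; this is the crucial Cartan-matrix check that singles out $C_n$ (not $B_n$ or $D_n$).

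The main technical step, and the hardest one, is the $q$-Serre relations between the new generator $E_n$ (i.e.\ $e^{\frac{n(n+1)}{2}}$) and the old generator $E_{n-1}$. I would recover $E_{n-1}$ as a suitable commutator-type expression in the $e^i$'s in the spirit of the identity $e^{n-1}=e^{n}E_{n-1}-q^{-1}E_{n-1}e^n$ used in the $B_n$ case, except now at the symmetric-square level the relevant basis vector is $v_{\frac{n(n+1)}{2}-2}=x_{n-1}\otimes x_n+q^{-1}x_n\otimes x_{n-1}$ and one encounters the $(q+q^{-1})$-factor. Using the quadratic braided commutation relations encoded by $R'$ (from Proposition \ref{prop1}, applied to those pairs of indices that involve the top weight), together with the explicit off-diagonal FRT entries in Proposition 3.3, the cubic relation
\[
E_n^{2}E_{n-1}-(q^{2}+q^{-2})E_nE_{n-1}E_n+E_{n-1}E_n^{2}=0
\]
should drop out, alongside the length-four Serre relation
\[
\sum_{k=0}^{3}(-1)^{k}\!\begin{bmatrix}3\\ k\end{bmatrix}_{q}\!E_{n-1}^{3-k}E_nE_{n-1}^{k}=0.
\]
The parallel computation with the $f_i$'s and the $(m^{-})$-entries from Proposition 3.3 yields the dual Serre relations for $F_n$ and $F_{n-1}$.

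Finally, I would assemble these computations to read off the Cartan matrix: $(\alpha_n,\alpha_n)=4$, $(\alpha_{n-1},\alpha_n)=-2$, $(\alpha_j,\alpha_n)=0$ for $j\le n-2$, which together with the Cartan matrix of $A_{n-1}$ already present in $U_q(\mathfrak{sl}_n)$ precisely gives the Cartan matrix of $C_n$. The match between the Drinfeld--Jimbo presentation of $U_q(\mathfrak{sp}_{2n})$ (enlarged by the indicated fractional-power group-like elements $K_i^{\pm 1/n}$) and the double-bosonization output then follows. I expect the genuine obstacle to be controlling the non-standard $R$-matrix contributions in the Serre computation: the $(q+q^{-1})$-enhanced entries coming from $E_{n-1}^{2}(x_{n-1}\otimes x_{n-1})=(q+q^{-1})(x_n\otimes x_n)$ must conspire exactly to produce the long-root $q$-binomial coefficients $[3]_{q}=q^{2}+1+q^{-2}$ rather than those of $[3]_{q^{1/2}}$ that appeared in the $B_n$ case; this is what ultimately selects $C_n$ over $B_n$.
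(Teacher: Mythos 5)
Your outline follows essentially the same route as the paper's proof: read off $[E_n,F_n]$ and the coproducts from Theorem \ref{cor1}, extract the $K$--$E$ cross relations from the diagonal and minor-diagonal FRT entries together with the weights of the basis of $sym^{2}V$, and derive the two $q$-Serre relations by expressing $e^{\frac{n(n+1)}{2}-1}$ as a $q$-commutator $E_nE_{n-1}-q^{-2}E_{n-1}E_n$ and then invoking the quadratic relations encoded by $R'$. The target relations you state (the cubic one in $E_n$ with coefficient $q^{2}+q^{-2}$ and the quartic one with $[3]_q$-binomials) are exactly those obtained in the paper.

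There is, however, one concrete error at the most sensitive point. The diagonal entry is $R_{VV}{}^{\frac{n(n+1)}{2},\frac{n(n+1)}{2}}_{\frac{n(n+1)}{2},\frac{n(n+1)}{2}}=q^{(\mu,\mu)}$ with $\mu=2\lambda_{n-1}$, so $(\mu,\mu)=\frac{4(n-1)}{n}$, and after multiplying by $\lambda^{-1}=q^{\frac{4}{n}}$ the cross relation is $E_nK_n=q^{4}K_nE_n$, not the $q^{2}$ you wrote. The exponent $4$ is precisely $(\alpha_n,\alpha_n)$, i.e.\ the datum that makes the new root \emph{long} and selects $C_n$; a factor $q^{2}$ would give $(\alpha_n,\alpha_n)=2$ and, combined with your (correct) $(\alpha_{n-1},\alpha_n)=-2$, would not produce a finite-type Cartan matrix at all. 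It also contradicts your own (correct) assertions that $q_\ast=q^{2}$ and $(\alpha_n,\alpha_n)=4$. The same correction applies to your claimed $K_nF_n=q^{-2}F_nK_n$. A second, minor slip: the vector $x_{n-1}\otimes x_n+q^{-1}x_n\otimes x_{n-1}$ is $v_{\frac{n(n+1)}{2}-1}$ (one step below the top weight), not $v_{\frac{n(n+1)}{2}-2}$; the index $\frac{n(n+1)}{2}-2$ belongs to $x_{n-1}\otimes x_{n-1}$, which is where the $(q+q^{-1})$-factor from $F_{n-1}$ actually enters (via $e^{\frac{n(n+1)}{2}-2}=\frac{1}{q+q^{-1}}\bigl(e^{\frac{n(n+1)}{2}-1}E_{n-1}-E_{n-1}e^{\frac{n(n+1)}{2}-1}\bigr)$, as in the paper). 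With these two points repaired, your plan coincides with the paper's argument.
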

\begin{proof}
For the identification in Theorem 3.2,
$[E_{n},F_{n}]=\frac{K_{n}-K_{n}^{-1}}{q^{2}-q^{-2}}$,
$\Delta(E_{n})=E_{n}\otimes K_{n}+1\otimes E_{n}$,
and
$\Delta(F_{n})=F_{n}\otimes 1+K_{n}^{-1}\otimes F_{n}$
can be deduced easily from Theorem \ref{cor1}.
Since the relations of negative part can be obtained in a similar way,
we only focus on the relations of the positive part.
$$
\begin{array}{rl}
E_{n}K_{n}
&=e^{\frac{n(n+1)}{2}}(m^{+})^{\frac{n(n+1)}{2}}_{\frac{n(n+1)}{2}}c^{-1}
=\lambda R^{\frac{n(n+1)}{2},}_{\frac{n(n+1)}{2},}{}^{\frac{n(n+1)}{2}}_{\frac{n(n+1)}{2}}
(m^{+})^{\frac{n(n+1)}{2}}_{\frac{n(n+1)}{2}}e^{\frac{n(n+1)}{2}}c^{-1}\\
&=\lambda\frac{1}{\lambda}R^{\frac{n(n+1)}{2},}_{\frac{n(n+1)}{2},}{}^{\frac{n(n+1)}{2}}_{\frac{n(n+1)}{2}}
(m^{+})^{\frac{n(n+1)}{2}}_{\frac{n(n+1)}{2}}c^{-1}e^{\frac{n(n+1)}{2}}
=q^{\frac{4}{n}}R_{VV}{}^{\frac{n(n+1)}{2},}_{\frac{n(n+1)}{2},}{}^{\frac{n(n+1)}{2}}_{\frac{n(n+1)}{2}}K_{n}E_{n}\\
&=q^{\frac{4}{n}}q^{\frac{4(n-1)}{n}}K_{n}E_{n}=q^{4}K_{n}E_{n}.
\end{array}
$$

The new group-like element
$K_{n}
=(m^{+})^{\frac{n(n+1)}{2}}_{\frac{n(n+1)}{2}}c^{-1}
=K^{-\frac{2\cdot 1}{n}}_{1}K^{-\frac{2\cdot 2}{n}}_{2}\cdots K^{-\frac{2\cdot (n-1)}{n}}_{n-1}c^{-1},
$
then when $2\leq j\leq n-2$,
we obtain

$
\begin{array}{rl}
K_{n}E_{j}&=K^{-\frac{2\cdot 1}{n}}_{1}K^{-\frac{2\cdot 2}{n}}_{2}\cdots K^{-\frac{2\cdot (n-1)}{n}}_{n-1}c^{-1}E_{j}\\
&=q^{-\frac{2(j-1)}{n}}q^{2\cdot\frac{2j}{n}}q^{-\frac{2(j+1)}{n}}
E_{j}K^{-\frac{2\cdot 1}{n}}_{1}K^{-\frac{2\cdot 2}{n}}_{2}\cdots K^{-\frac{2\cdot (n-1)}{n}}_{n-1}c^{-1}
=E_{j}K_{n},
\end{array}
$

$
\begin{array}{rl}
K_{n}E_{1}&=K^{-\frac{2\cdot 1}{n}}_{1}K^{-\frac{2\cdot 2}{n}}_{2}\cdots K^{-\frac{2\cdot (n-1)}{n}}_{n-1}c^{-1}E_{1}\\
&=q^{2\cdot\frac{2}{n}}q^{-\frac{2\cdot 2}{n}}
E_{1}K^{-\frac{2\cdot 1}{n}}_{1}K^{-\frac{2\cdot 2}{n}}_{2}\cdots K^{-\frac{2\cdot (n-1)}{n}}_{n-1}c^{-1}
=E_{1}K_{n},
\end{array}
$

$
\begin{array}{rl}
K_{n}E_{n-1}&=K^{-\frac{2\cdot 1}{n}}_{1}K^{-\frac{2\cdot 2}{n}}_{2}\cdots K^{-\frac{2\cdot (n-1)}{n}}_{n-1}c^{-1}E_{n-1}\\
&=q^{-\frac{2\cdot(n-2)}{n}}q^{2\cdot\frac{2\cdot (n-1)}{n}}
E_{n-1}K^{-\frac{2\cdot 1}{n}}_{1}K^{-\frac{2\cdot 2}{n}}_{2}\cdots K^{-\frac{2\cdot (n-1)}{n}}_{n-1}c^{-1}
=q^{2}E_{n-1}K_{n}.
\end{array}
$

In order to explore the relations between the new simple root vector $E_{n}$ and $K_{i}, 1\leq i\leq n-1$,
we consider the cross relation
$$
e^{\frac{n(n+1)}{2}}(m^{+})^{j}_{j}
=\lambda R^{j,}_{j,}{}^{\frac{n(n+1)}{2}}_{\frac{n(n+1)}{2}}(m^{+})^{j}_{j}e^{\frac{n(n+1)}{2}}
=R_{VV}{}^{j,}_{j,}{}^{\frac{n(n+1)}{2}}_{\frac{n(n+1)}{2}}(m^{+})^{j}_{j}e^{\frac{n(n+1)}{2}}
=q^{(\mu_{j},\mu_{\frac{n(n+1)}{2}})}(m^{+})^{j}_{j}e^{\frac{n(n+1)}{2}}.
$$
Combining with $K_{i}(m^{+})^{i+1}_{i+1}=(m^{+})^{i}_{i},1\leq i\leq n-1$,
which can be deduced from the equalities in the above Proposition,
we obtain
$
e^{\frac{n(n+1)}{2}}K_{i}=q^{(\mu_{i}-\mu_{i+1},\mu_{\frac{n(n+1)}{2}})}K_{i}e^{\frac{n(n+1)}{2}}
$,
which is equivalent to
$
E_{n}K_{i}=q^{(\mu_{i}-\mu_{i+1},\mu_{\frac{n(n+1)}{2}})}K_{i}E_{n}.
$
According to the weight of $x_{i}$,
we get
$\mu_{i}=-2\lambda_{1}+\alpha_{1}+\cdots+\alpha_{i-1},$
$\mu_{i+1}=-2\lambda_{1}+\alpha_{1}+\cdots+\alpha_{i-1}+\alpha_{i},$
$
\mu_{\frac{n(n+1)}{2}}=-2\lambda_{1}+2\alpha_{1}+\cdots+2\alpha_{n-2}+2\alpha_{n-1},
$
then
$$
q^{(\mu_{i}-\mu_{i+1},\mu_{\frac{n(n+1)}{2}})}
=q^{(-\alpha_{i},-2\lambda_{1}+2\alpha_{1}+\cdots+2\alpha_{n-2}+2\alpha_{n-1})}=
\left\{
\begin{array}{l}
1,\quad 1\leq i\leq n-2,\\
q^{-2},\quad i=n-1.
\end{array}
\right.
$$

So
$
K_{i}E_{n}=E_{n}K_{i}, 1\leq i\leq n-2;
$
$
E_{n}K_{n-1}=q^{2}K_{n-1}E_{n}.
$

We also observe that each $E_{i}$ is included in the entry $(m^{+})^{i}_{i+1}$ by the above Proposition,
then the $q$-Serre relations between $e^{\frac{n(n+1)}{2}}$ and $E_{i}, 1\leq i\leq n-1$ can be deduced from the following equalities
$$
\left\{
\begin{array}{l}
e^{\frac{n(n+1)}{2}}(m^{+})^{i}_{i+1}
=\lambda R^{i,\frac{n(n+1)}{2}}_{i,\frac{n(n+1)}{2}}(m^{+})^{i}_{i+1}e^{\frac{n(n+1)}{2}}
+\lambda R^{i,\frac{n(n+1)}{2}}_{i+1,\frac{n(n+1)}{2}-1}(m^{+})^{i+1}_{i+1}e^{\frac{n(n+1)}{2}-1},\\
e^{\frac{n(n+1)}{2}}(m^{+})^{i+1}_{i+1}
=\lambda R^{i+1,}_{i+1,}{}^{\frac{n(n+1)}{2}}_{\frac{n(n+1)}{2}}(m^{+})^{i+1}_{i+1}e^{\frac{n(n+1)}{2}},\\
(m^{+})^{1}_{2}=(q+q^{-1})(q-q^{-1})E_{1}(m^{+})^{2}_{2},\\
(m^{+})^{i}_{i+1}=(q-q^{-1})
E_{i}(m^{+})^{i+1}_{i+1}, \quad 2\leq i\leq n-1,\\
R^{i,\frac{n(n+1)}2}_{i+1,\frac{n(n+1)}2-1}=0,\quad 1\leq i\leq n-2.
\end{array}
\right.
$$

In virtue of these,
we obtain
$$
\left\{
\begin{array}{l}
e^{\frac{n(n+1)}{2}}E_{i}=E_{i}e^{\frac{n(n+1)}{2}},\\
e^{\frac{n(n+1)}{2}-1}=e^{\frac{n(n+1)}{2}}E_{n-1}-q^{-2}E_{n-1}e^{\frac{n(n+1)}{2}},
\end{array}
\right.
\Longleftrightarrow
\left\{
\begin{array}{l}
E_{n}E_{i}=E_{i}E_{n},\quad i<n,\\
e^{\frac{n(n+1)}{2}-1}=E_{n}E_{n-1}-q^{-2}E_{n-1}E_{n}.
\end{array}
\right.
$$

So we need to explore the relations between $e^{\frac{n(n+1)}{2}-1}$ and $e^{\frac{n(n+1)}{2}}, E_{n-1}$.
According to
$
R^{\prime}
=RPR-(q^{-2}+q^{4})R+(q^{2}+1)P,
$
we get
$R^{\prime}{}^{\frac{n(n+1)}{2},}_{\frac{n(n+1)}{2},}{}^{\frac{n(n+1)}{2}-1}_{\frac{n(n+1)}{2}-1}
=-q^{2}-1$
and
$R^{\prime}{}^{\frac{n(n+1)}{2},\frac{n(n+1)}{2}-1}_{\frac{n(n+1)}{2}-1,\frac{n(n+1)}{2}}
=q^{4}+q^{2}+1.$
Then
$$e^{\frac{n(n+1)}{2}-1}e^{\frac{n(n+1)}{2}}
=R^{\prime}{}^{\frac{n(n+1)}{2},\frac{n(n+1)}{2}-1}_{a,b}e^{a}e^{b}
=-(q^{2}+1)e^{\frac{n(n+1)}{2}}e^{\frac{n(n+1)}{2}-1}
+(q^{4}+q^{2}+1)e^{\frac{n(n+1)}{2}-1}e^{\frac{n(n+1)}{2}},
$$
so
$
e^{\frac{n(n+1)}{2}}e^{\frac{n(n+1)}{2}-1}=q^{2}e^{\frac{n(n+1)}{2}-1}e^{\frac{n(n+1)}{2}}.
$
Combining with
$e^{\frac{n(n+1)}{2}-1}=E_{n}E_{n-1}-q^{-2}E_{n-1}E_{n},$
we obtain
$$
(E_{n})^{2}E_{n-1}-(q^{2}+q^{-2})E_{n}E_{n-1}E_{n}+E_{n-1}(E_{n})^{2}=0.
$$

On the other hand,
according to
$$
e^{\frac{n(n+1)}{2}-1}(m^{+})^{n-1}_{n}
=\lambda R^{n-1,}_{n-1,}{}^{\frac{n(n+1)}{2}-1}_{\frac{n(n+1)}{2}-1}(m^{+})^{n-1}_{n}e^{\frac{n(n+1)}{2}-1}
+\lambda R^{n-1,\frac{n(n+1)}{2}-1}_{n,\frac{n(n+1)}{2}-2}(m^{+})^{n}_{n}e^{\frac{n(n+1)}{2}-2},
$$
we obtain
$
e^{\frac{n(n+1)}{2}-2}=\frac{1}{q+q^{-1}}(e^{\frac{n(n+1)}{2}-1}E_{n-1}-E_{n-1}e^{\frac{n(n+1)}{2}-1}).
$
So the relation between $e^{\frac{n(n+1)}{2}-1}$ and $E_{n-1}$
depends on the relation between $e^{\frac{n(n+1)}{2}-2}$ and $E_{n-1}$,
and the relation between $e^{\frac{n(n+1)}{2}-2}$ and $E_{n-1}$ is $e^{\frac{n(n+1)}{2}-2}E_{n-1}=q^{2}E_{n-1}e^{\frac{n(n+1)}{2}-2}$,
which is deduced from the equality
$e^{\frac{n(n+1)}{2}-2}(m^{+})^{n-1}_{n}
=\lambda R^{n-1,}_{n-1,}{}^{\frac{n(n+1)}{2}-2}_{\frac{n(n+1)}{2}-2}(m^{+})^{n-1}_{n}e^{\frac{n(n+1)}{2}-2}.$
Then we obtain
$$
(e^{\frac{n(n+1)}{2}-1}E_{n-1}-E_{n-1}e^{\frac{n(n+1)}{2}-1})E_{n-1}
=q^{2}E_{n-1}(e^{\frac{n(n+1)}{2}-1}E_{n-1}-E_{n-1}e^{\frac{n(n+1)}{2}-1}).
$$
Combining with
$e^{\frac{n(n+1)}{2}-1}=E_{n}E_{n-1}-q^{-2}E_{n-1}E_{n}$
again,
we obtain
$$
(E_{n-1})^{3}E_{n}-(q^{2}+1+q^{-2})(E_{n-1})^{2}E_{n}E_{n-1}+(q^{2}+1+q^{-2})E_{n-1}E_{n}E_{n-1}^{2}-E_{n}(E_{n-1})^{3}=0,
$$
namely,
$$
(E_{n-1})^{3}E_{n}-
\left[
\begin{array}{c}
3\\
1
\end{array}
\right]_{q}
(E_{n-1})^{2}E_{n}E_{n-1}+\left[
\begin{array}{c}
3\\
2
\end{array}
\right]_{q}E_{n-1}E_{n}E_{n-1}^{2}-E_{n}(E_{n-1})^{3}=0.
$$

In view of these relations,
the length of the new simple root $\alpha_{n}$ corresponding to the additional simple root vectors $E_{n},\,F_{n}$ is
$(\alpha_{n},\alpha_{n})=4$,
$(\alpha_{n-1},\alpha_{n})=-2$,
and
$(\alpha_{i},\alpha_{n})=0, \ 1\leq i\leq n-2$.
This gives us the Cartan matrix of type $C_n$.
The proof is complete.
\end{proof}

\subsection{$A_{n-1}$ $\Longrightarrow$ $D_{n}$}
Let $\wedge^{2}V$ denote the second quantum exterior power of the vector representation $T_{V}$ of $U_{q}({\mathfrak {sl}}_{n})$,
then
$\wedge^{2}V\cong (V\otimes V)/{sym^{2}V}=\bigoplus\limits_{i,j=1}^{n}k\{x_{i}\wedge x_{j}, i<j\}$,
and
$
x_{i}\wedge x_{i}=0,
x_{i}\wedge x_{j}=-q^{-1}x_{j}\wedge x_{i}, i<j.
$
$\wedge^{2}V$ is an irreducible  $\frac{n(n-1)}{2}$-dimensional representation of $U_{q}({\mathfrak {sl}}_{n})$,
given by

$
E_{k}(x_{i}\wedge x_{j})=
\left\{
\begin{array}{ll}
x_{i+1}\wedge x_{j}, &\mbox{if}~k=i,j>i+1,\\
x_{i}\wedge x_{j+1},&\mbox{if}~k=j,\\
0,&\mbox{otherwise},
\end{array}
\right.
$

$
F_{k}(x_{i}\wedge x_{j})=
\left\{
\begin{array}{ll}
x_{i-1}\wedge x_{j}, &\mbox{if}~k=i-1,\\
x_{i}\wedge x_{j-1},&\mbox{if}~k=j-1,j>i+1,\\
0,&\mbox{otherwise}.
\end{array}
\right.
$

For convenience,
let $\{\,v_{i}\,\}$ be a basis of $\wedge^{2}V$ with weights $\{\,\mu_{i}\,\}$ arranged in the raising-weights order.
Associated to this $\frac{n(n-1)}{2}$-dimensional module $\wedge^{2}V$,
we see that
$E_{\beta}(f_{m})=f_{n} \Longleftrightarrow F_{\beta}(f_{n})=f_{m}$
and
every $E_{i}^{2}$ is zero action.
With these,
we obtain the following
\begin{proposition}
$(1)$
The matrix $PR_{VV}$ is symmetric and obeys the minimal polynomial equation
$$
(PR_{VV}-q^{\frac{2(n-2)}{n}}I)(PR_{VV}-q^{-\frac{4}{n}}I)(PR_{VV}+q^{-\frac{4}{n}}I)=0.
$$
$(2)$
The diagonal and minor diagonal entries in FRT-matrix $m^{\pm}$ we need are
\begin{gather*}
(m^{+})^{i-1}_{i}=(q-q^{-1})E_{i}(m^{+})^{i}_{i},\quad
(m^{-})^{i}_{i-1}=q(q-q^{-1})(m^{-})^{i}_{i}F_{i},\quad 2\leq i\leq n-1.
\\
(m^{+})^{i}_{i}=K^{\frac{n-2}{n}}_{1}\cdots K^{\frac{n-2i}{n}}_{i} K^{\frac{2(n-(i+1))}{n}}_{i+1}
\cdots K^{\frac{2\cdot2}{n}}_{n-2}K^{\frac{2\cdot1}{n}}_{n-1},\quad
(m^{-})^{i}_{i}(m^{+})^{i}_{i}=1, \quad 1\leq i\leq n-1.\\
(m^{+})^{n-1}_{2n-3}=(q-q^{-1})E_{1}K_{1}^{-1}(m^{+})^{n-1}_{n-1}
=(q-q^{-1})E_{1}K^{-\frac{2}{n}}_{1}K^{\frac{n-2\cdot2}{n}}_{2}K^{\frac{n-2\cdot3}{n}}_{3}
\cdots K^{\frac{n-2\cdot(n-2)}{n}}_{n-2}K^{\frac{n-2\cdot(n-1)}{n}}_{n-1}.\\
(m^{-})^{2n-3}_{n-1}=q(q-q^{-1})(m^{-})^{n-2}_{n-2}K_{1}F_{1}
=q(q-q^{-1})K^{\frac{2}{n}}_{1}K^{-\frac{n-2\cdot2}{n}}_{2}K^{-\frac{n-2\cdot3}{n}}_{3}
\cdots K^{-\frac{n-2\cdot(n-2)}{n}}_{n-2}K^{-\frac{n-2\cdot(n-1)}{n}}_{n-1}F_{1}.\\
(m^{+})^{\frac{n(n-1)}{2}}_{\frac{n(n-1)}{2}}=K^{-\frac{2}{n}}_{1}K^{-\frac{2\cdot2}{n}}_{2}
\cdots K^{-\frac{2\cdot(n-3)}{n}}_{n-3}K^{-\frac{2\cdot(n-2)}{n}}_{n-2}K^{-\frac{n-2}{n}}_{n-1}.
\end{gather*}
\end{proposition}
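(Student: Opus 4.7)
The plan is to split the proposition into the two parts and treat each by a direct analogue of strategies already used in this paper; part (1) follows the template of Proposition~\ref{lemmaC}, and part (2) is an application of Lemma~\ref{lem1}.

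For the symmetry assertion in part (1), the first thing I would note is that, in contrast with $sym^{2}V$, every raising operator $T_{V}(E_{k})$ on $\wedge^{2}V$ sends a basis vector either to zero or to another basis vector with coefficient $1$; no factor $q+q^{-1}$ can appear because $E_{i}(x_{i}\wedge x_{i+1})=0$ on the exterior power. Consequently the first-order term $(q-q^{-1})(E_{k}\otimes F_{k})$ in $\mathfrak{R}$ produces identical scalars at the $(ij,mn)$ and $(nm,ji)$ positions, and after combining with the diagonal factor $q^{(\mu_{m},\mu_{n})}$ one obtains $R_{VV}{}^{ij}_{mn}=R_{VV}{}^{nm}_{ji}$, namely $PR_{VV}=(PR_{VV})^{t}$. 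For the minimal polynomial itself I would mimic the proof of Proposition~\ref{lemmaC}: the Littlewood-Richardson rule gives $\wedge^{2}V\otimes\wedge^{2}V=V(2\lambda_{2})\oplus V(\lambda_{1}+\lambda_{3})\oplus V(\lambda_{4})$ for $n\geq 4$, so $PR_{VV}$ has exactly three distinct eigenvalues $y_{1},y_{2},y_{3}$. Setting $\mathcal{N}=\prod_{i}(PR_{VV}-y_{i}I)$, I would extract three linear equations in the elementary symmetric functions $\triangle_{1},\triangle_{2},\triangle_{3}$ by zeroing out three carefully chosen rows (e.g.\ the row indexed by the lowest-weight pair $(x_{1}\wedge x_{2},\,x_{1}\wedge x_{3})$, a middle row whose diagonal coupling produces $q^{2(n-2)/n}$, and a row near the top weight), then solve the resulting $3\times 3$ system to recover the eigenvalues $q^{2(n-2)/n}$, $q^{-4/n}$ and $-q^{-4/n}$. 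The normalization $\lambda=q^{-2(n-2)/n}$ then aligns the eigenvalue $-1$ with the framework of Remark~\ref{rem1}.

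For part (2) I would apply Lemma~\ref{lem1}: each $(m^{\pm})^{i}_{j}$ is the antipode of the corresponding $L$-functional $l^{\pm}_{ij}\in U_{q}(\mathfrak{sl}_{n})$ obtained by evaluating $\mathfrak{R}$ on $\wedge^{2}V$. The diagonal entries $(m^{+})^{i}_{i}$ come from the $B_{VV}$-part of $\mathfrak{R}$; expanding the weights $\mu_{i}$ of the weight-ordered basis of $\wedge^{2}V$ in fundamental weights and then applying $S(K_{j})=K_{j}^{-1}$ returns precisely the listed products of $K_{j}$'s, while $(m^{-})^{i}_{i}(m^{+})^{i}_{i}=1$ is immediate from $S^{2}=\mathrm{id}$ on the Cartan part. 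The minor diagonal entries $(m^{+})^{i-1}_{i}$, $(m^{-})^{i}_{i-1}$ come from the first-order term $(q-q^{-1})E_{k}\otimes F_{k}$, combined with $S(E_{k})=-K_{k}^{-1}E_{k}$ and $S(F_{k})=-F_{k}K_{k}$. The two exceptional ``long-range'' entries $(m^{+})^{n-1}_{2n-3}$ and $(m^{-})^{2n-3}_{n-1}$ record the fact that in the raising-weight lexicographic order the basis vector $x_{1}\wedge x_{n}$ sits in position $n-1$ while $x_{2}\wedge x_{n}$ sits in position $2n-3$; these are linked by a single $E_{1}$-arrow across a large lexicographic gap, and tracking the antipode along this arrow picks up the extra factor $K_{1}^{-1}$ (respectively $K_{1}$) visible in the formulas.

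The main obstacle I anticipate is the bookkeeping in the minimal-polynomial step: one must ensure that the three rows chosen to extract linear equations are simultaneously non-degenerate and produce coefficients in $\triangle_{1},\triangle_{2},\triangle_{3}$ with the right distinct shapes, which rests on a careful use of the precise form of the raising and lowering arrows in $\wedge^{2}V$. A secondary challenge is the verification of the two long-range FRT entries in part (2), where one must correctly identify which non-adjacent $E_{1}$-arrow contributes at position $(n-1,\,2n-3)$ and track the resulting $K_{1}$-twist through the antipode; this is a direct but delicate computation that does not appear in the $sym^{2}V$ case of Theorem 3.2.
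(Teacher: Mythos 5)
Your proposal follows essentially the same route as the paper: for part (1) it uses the identical decomposition $\wedge^{2}V\otimes\wedge^{2}V=V(2\lambda_{2})\oplus V(\lambda_{1}+\lambda_{3})\oplus V(\lambda_{4})$ to get three eigenvalues and then extracts a $3\times3$ linear system in the symmetric functions of $y_{1},y_{2},y_{3}$ from selected rows of $\mathcal{N}$ (the paper uses rows $(12)$, $(21)$ and $(1,\tfrac{n(n-1)}{2})$), and for part (2) it invokes Lemma~\ref{lem1} exactly as the paper intends (the paper omits this computation as ``similar methods''), including the correct identification of the $E_{1}$-arrow from position $n-1$ ($x_{1}\wedge x_{n}$) to position $2n-3$ ($x_{2}\wedge x_{n}$). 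One slip: the normalization constant you quote, $\lambda=q^{-2(n-2)/n}$, is wrong --- to align the eigenvalue $-q^{-4/n}$ with $-1$ as in Remark~\ref{rem1} one must take $\lambda=q^{-4/n}$, i.e.\ $R=q^{4/n}R_{VV}$, as the paper does immediately after the proposition; this does not affect the proposition itself but would derail the subsequent construction of $R'$.
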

\begin{proof}
These results can be obtained by the similar methods.
We only describe the minimal polynomial in $(1)$,
which is deduced by the ingenious method in Proposition 3.2.
Firstly, it is well-known that for type $A_{n-1}$,
when $n\geq 4$,
the decomposition of the tensor product of the module is
$\wedge^{2}V\otimes \wedge^{2}V=V_1\oplus V_2\oplus V_3$ (see \cite{FH}),
where $V_i$ is the irreducible representation with highest weight
$
2\lambda_{2},
\lambda_{1}+\lambda_{3},
\lambda_{4},
$ respectively.
This means that there are $3$ eigenvalues,
denoted by $y_1, y_2, y_3$.
We will also consider nonzero entries at rows $(12),(21)$ in matrix $PR_{VV}-y_{i}I$,
which are
$
(PR_{VV}-y_{i}I)^{1}_{1}{}^{2}_{2}=-y_{i},
(PR_{VV}-y_{i}I)^{1}_{2}{}^{2}_{1}=q^{\frac{n-4}{n}},
(PR_{VV}-y_{i}I)^{2}_{1}{}^{1}_{2}=q^{\frac{n-4}{n}},
(PR_{VV}-y_{i}I)^{2}_{2}{}^{1}_{1}=q^{\frac{n-4}{n}}(q-q^{-1})-y_{i}.
$
Then the entries at row $(12)$ in matrix $\mathcal{N}$ are
\begin{gather*}
\mathcal{N}^{1}_{1}{}^{2}_{2}
=-y_{1}y_{2}y_{3}-q^{\frac{2(n-4)}{n}}(y_{1}+y_{2}+y_{3})+q^{\frac{2(n-4)}{n}}(q-q^{-1}),\\
\mathcal{N}^{1}_{2}{}^{2}_{1}
=q^{\frac{n-4}{n}}(y_{1}y_{2}+y_{2}y_{3}+y_{1}y_{3})-q^{\frac{2(n-4)}{n}}(q-q^{-1})(y_{1}+y_{2}+y_{3})
+q^{\frac{3(n-4)}{n}}[1+(q-q^{-1})^{2}].
\end{gather*}

Nonzero entries at row $(1,\frac{n(n-1)}{2})$ in matrix $PR_{VV}-y_{i}I$ are
$
(PR_{VV}-y_{i}I)^{1,}_{1,}{}^{\frac{n(n-1)}{2}}_{\frac{n(n-1)}{2}}=-y_{i},
(PR_{VV}-y_{i}I)^{1,\frac{n(n-1)}{2}}_{\frac{n(n-1)}{2},1}=q^{-\frac{4}{n}}.
$
Nonzero entries at row $(\frac{n(n-1)}{2},1)$ in matrix $PR_{VV}-y_{i}I$ are
$
(PR_{VV}-y_{i}I)^{\frac{n(n-1)}{2},1}_{1,\frac{n(n-1)}{2}}=q^{-\frac{4}{n}},
(PR_{VV}-y_{i}I)^{\frac{n(n-1)}{2},}_{\frac{n(n-1)}{2},}{}^{1}_{1}=-y_{i},
$
moreover,
there exist some $a_{j},b_{j}$,
$
1<a_{j},b_{j}<\frac{n(n-1)}{2},
$
and
$
(PR_{VV}-y_{i}I)^{\frac{n(n-1)}{2},1}_{a_{j},b_{j}}=f(q),
$
for some
$
f(q)\in k[q,q^{-1}].
$
Then we obtain the entries at row $(1,\frac{n(n-1)}{2})$ in matrix $(PR_{VV}-y_{1}I)(PR_{VV}-y_{2}I)$ are
$$
\begin{array}{rl}
&[(PR_{VV}-y_{1}I)(PR_{VV}-y_{2}I)]^{1,}_{1,}{}^{\frac{n(n-1)}{2}}_{\frac{n(n-1)}{2}}\\
=&(PR_{VV}-y_{1}I)^{1,}_{1,}{}^{\frac{n(n-1)}{2}}_{\frac{n(n-1)}{2}}(PR_{VV}-y_{2}I)^{1,}_{1,}{}^{\frac{n(n-1)}{2}}_{\frac{n(n-1)}{2}}
+(PR_{VV}-y_{1}I)^{1,\frac{n(n-1)}{2}}_{\frac{n(n-1)}{2},1}
(PR_{VV}-y_{2}I)^{\frac{n(n-1)}{2},1}_{1,\frac{n(n-1)}{2}}\\
=&y_{1}y_{2}+q^{-\frac{8}{n}},
\end{array}
$$
$$
\begin{array}{rl}
&[(PR_{VV}-y_{1}I)(PR_{VV}-y_{2}I)]^{1,\frac{n(n-1)}{2}}_{\frac{n(n-1)}{2},1}\\
=&(PR_{VV}-y_{1}I)^{1,}_{1,}{}^{\frac{n(n-1)}{2}}_{\frac{n(n-1)}{2}}
(PR_{VV}-y_{2}I)^{1,\frac{n(n-1)}{2}}_{\frac{n(n-1)}{2},1}
+(PR_{VV}-y_{1}I)^{1,\frac{n(n-1)}{2}}_{\frac{n(n-1)}{2},1}
(PR_{VV}-y_{2}I)^{\frac{n(n-1)}{2},}_{\frac{n(n-1)}{2},}{}^{1}_{1}\\
=&-q^{-\frac{4}{n}}(y_{1}+y_{2}),
\end{array}
$$
$$
\begin{array}{rl}
&[(PR_{VV}-y_{1}I)(PR_{VV}-y_{2}I)]^{1,}_{a_{j},}{}^{\frac{n(n-1)}{2}}_{b_{j}}\\
=&(PR_{VV}-y_{1}I)^{1,}_{1,}{}^{\frac{n(n-1)}{2}}_{\frac{n(n-1)}{2}}(PR_{VV}-y_{2}I)^{1,}_{a_{j},}{}^{\frac{n(n-1)}{2}}_{b_{j}}
+(PR_{VV}-y_{1}I)^{1,\frac{n(n-1)}{2}}_{\frac{n(n-1)}{2},1}(PR_{VV}-y_{2}I)^{\frac{n(n-1)}{2},1}_{a_{j},b_{j}}\\
=&(PR_{VV}-y_{1}I)^{1,\frac{n(n-1)}{2}}_{\frac{n(n-1)}{2},1}(PR_{VV}-y_{2}I)^{\frac{n(n-1)}{2},1}_{a_{j},b_{j}}
=q^{-\frac{4}{n}}f(q).
\end{array}
$$
With these entries,
we obtain the entry at row $(1,\frac{n(n-1)}{2})$ and column $(1,\frac{n(n-1)}{2})$ in matrix $\mathcal{N}$ is
$$
\begin{array}{rl}
\mathcal{N}^{1,}_{1,}{}^{\frac{n(n-1)}{2}}_{\frac{n(n-1)}{2}}
=&[(PR_{VV}-y_{1}I)(PR_{VV}-y_{2}I)]^{1,}_{1,}{}^{\frac{n(n-1)}{2}}_{\frac{n(n-1)}{2}}(PR_{VV}-y_{3}I)^{1,}_{1,}{}^{\frac{n(n-1)}{2}}_{\frac{n(n-1)}{2}}\\
&+[(PR_{VV}-y_{1}I)(PR_{VV}-y_{2}I)]^{1,\frac{n(n-1)}{2}}_{\frac{n(n-1)}{2},1}
(PR_{VV}-y_{3}I)^{\frac{n(n-1)}{2},1}_{1,\frac{n(n-1)}{2}}\\
&+[(PR_{VV}-y_{1}I)(PR_{VV}-y_{2}I)]^{1,\frac{n(n-1)}{2}}_{a_{j},b_{j}}
(PR_{VV}-y_{3}I)^{a_{j},b_{j}}_{1,\frac{n(n-1)}{2}}\\
=&-(y_{1}y_{2}+q^{-\frac{8}{n}})y_{3}-q^{-\frac{4}{n}}(y_{1}+y_{2})q^{-\frac{4}{n}}+q^{-\frac{4}{n}}f(q)\cdot0\\
=&-y_{1}y_{2}y_{3}-q^{-\frac{8}{n}}(y_{1}+y_{2}+y_{3}).
\end{array}
$$
With these,
we obtain the following equations
$$
\left\{
\begin{array}{ll}
-y_{1}y_{2}y_{3}-q^{\frac{2(n-4)}{n}}(y_{1}+y_{2}+y_{3})+q^{\frac{2(n-4)}{n}}(q-q^{-1})=0,\\
q^{\frac{n-4}{n}}(y_{1}y_{2}+y_{2}y_{3}+y_{1}y_{3})-q^{\frac{2(n-4)}{n}}(q-q^{-1})(y_{1}+y_{2}+y_{3})
+q^{\frac{3(n-4)}{n}}[1+(q-q^{-1})^{2}]=0,\\
-y_{1}y_{2}y_{3}-q^{-\frac{8}{n}}(y_{1}+y_{2}+y_{3})=0.
\end{array}
\right.
$$
Solving it,
we obtain that these eigenvalues are
$
q^{\frac{2(n-2)}{n}},
\pm q^{-\frac{4}{n}}.
$
So the minimal polynomial equation  of $PR_{VV}$ is
$
(PR_{VV}-q^{\frac{2(n-2)}{n}}I)(PR_{VV}-q^{-\frac{4}{n}}I)(PR_{VV}+q^{-\frac{4}{n}}I)=0.
$
\end{proof}

Set
$
R=q^{\frac{4}{n}}R_{VV},
$
$
R^{\prime}
=RPR-(q^{2}{+}1)R+(q^{2}{+}1)P,
$
then we have $(PR{+}I)(PR^{\prime}{-}I)=0$ by Proposition 3.4,
and braided groups $V^{\vee}(R^{\prime},R_{21}^{-1}),V(R^{\prime},R)$ in the braided category
${}^{\widetilde{H_{\lambda R}}}\mathfrak{M},  \mathfrak{M}^{\widetilde{H_{\lambda R}}}$, respectively.
With these analysis,
we obtain the following

\begin{theorem}\label{theoD}
With $\lambda=q^{-\frac{4}{n}}$,
identify $e^{\frac{n(n-1)}{2}},f_{\frac{n(n-1)}{2}},(m^{+})^{\frac{n(n-1)}{2}}_{\frac{n(n-1)}{2}}c^{-1}$ with
the additional simple root vectors $E_{n}, F_{n}$ and the group-like element $K_{n}$.
Then the resulting new quantum group $U(V^{\vee}(R^{\prime},R_{21}^{-1}),\widetilde{U_{q}^{ext}({\mathfrak {sl}}_{n})},V(R^{\prime},R))$
is exactly the $U_{q}({\mathfrak {so}}_{2n})$ with $K_{i}^{\pm\frac{1}{n}}$ adjoined.
\end{theorem}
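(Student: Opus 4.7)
My plan is to mimic the strategy used for $C_n$ in Theorem~3.2, but with the new node attaching at $\alpha_{n-2}$ rather than $\alpha_{n-1}$, so that the resulting Dynkin diagram is of type $D_n$. First, an appeal to the generalized double-bosonization Theorem~\ref{cor1} immediately yields
$$[E_n,F_n]=\frac{K_n-K_n^{-1}}{q-q^{-1}},\qquad \Delta(E_n)=E_n\otimes K_n+1\otimes E_n,\qquad \Delta(F_n)=F_n\otimes 1+K_n^{-1}\otimes F_n,$$
after absorbing $q_\ast-q_\ast^{-1}$ into a normalization of $e^{n(n-1)/2}$ (note that the target value $q_\ast=q$ here reflects the fact that $(\alpha_n,\alpha_n)=2$ for type $D_n$). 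Negative-part relations will then follow mechanically by the same analysis, so I will only treat the positive part explicitly.

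Second, I would compute $E_nK_n$ via the cross relation $e^{i}(m^+)^{j}_{k}=R_{VV}{}^{ji}_{ab}(m^+)^{a}_{k}e^{b}$ with $i=j=k=n(n-1)/2$. By upper-triangularity only the diagonal term $R_{VV}{}^{ii}_{ii}$ survives, and for the top weight vector this equals the largest eigenvalue $q^{2(n-2)/n}$ of $PR_{VV}$ from Proposition~3.4. Combined with $\lambda^{-1}=q^{4/n}$ from $e^{i}c^{-1}=\lambda^{-1}c^{-1}e^{i}$, one gets $E_nK_n=q^{2}K_nE_n$, so $(\alpha_n,\alpha_n)=2$. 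Next, using the explicit expression for $(m^+)^{n(n-1)/2}_{n(n-1)/2}$ in Proposition~3.4, the group-like element $K_n$ becomes a monomial in the $K_j^{\pm 1/n}$ and $c^{-1}$; pushing $K_n$ past each $E_j$ is then a direct calculation that will yield $K_nE_j=E_jK_n$ for $j\in\{1,\dots,n-3,n-1\}$ and $K_nE_{n-2}=q^{-1}E_{n-2}K_n$. This recovers precisely the Cartan integers $(\alpha_n,\alpha_{n-2})=-1$ and $(\alpha_n,\alpha_j)=0$ otherwise, identifying the Dynkin diagram as $D_n$.

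Third, for the Serre relations I would analyze the cross relations between $e^{n(n-1)/2}$ and the off-diagonal entries $(m^+)^{i}_{i+1}$ (which encode the $E_i$'s) together with the special entry $(m^+)^{n-1}_{2n-3}$ (which encodes $E_1$). When $i\notin\{n-2\}$, weight considerations together with the upper-triangularity of $R_{VV}$ force all extra terms in $R_{VV}{}^{i,n(n-1)/2}_{a,b}(m^+)^{a}_{i+1}e^{b}$ to vanish and the diagonal scalar to be $1$, giving $E_nE_j=E_jE_n$ (this must include the $i=n-1$ check, handled via the entry $(m^+)^{n-1}_{n}$, plus a separate check through $(m^+)^{n-1}_{2n-3}$ for commutation with $E_1$). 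For $i=n-2$ a new element $e^{k}:=\alpha(E_nE_{n-2}-q^{-1}E_{n-2}E_n)$ arises for some proportionality $\alpha$ and some index $k<n(n-1)/2$ read off from the non-vanishing off-diagonal $R_{VV}{}^{n-2,n(n-1)/2}_{n-1,k}$-entry (exactly as in the $C_n$ case the element $e^{n(n+1)/2-1}$ appeared).

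Fourth, to upgrade this commutator identity to the quantum Serre relations, I would use the quadratic relations $e^{i}e^{j}=R'{}^{ji}_{ab}e^{a}e^{b}$ of $V(R',R)$ with $R'=RPR-(q^{2}+1)R+(q^{2}+1)P$. Computing the relevant entries of $R'$ at the indices $(n(n-1)/2,k)$ should yield the braided commutation $e^{n(n-1)/2}e^{k}=q\,e^{k}e^{n(n-1)/2}$ (the simply-laced analogue of $q^{2}$ in the $C_n$ case), which together with the expression for $e^{k}$ produces
$$E_n^{2}E_{n-2}-(q+q^{-1})E_nE_{n-2}E_n+E_{n-2}E_n^{2}=0.$$
The dual Serre relation with $E_{n-2}$ in degree three is obtained by iterating the construction: one more application of the cross relation $e^{k}(m^+)^{n-2}_{n-1}$ produces a further lower element whose commutation with $E_{n-2}$ is again scalar, and substituting back yields the Serre relation
$$E_{n-2}^{3}E_n-\left[\begin{array}{c}3\\1\end{array}\right]_{q}E_{n-2}^{2}E_nE_{n-2}+\left[\begin{array}{c}3\\2\end{array}\right]_{q}E_{n-2}E_nE_{n-2}^{2}-E_nE_{n-2}^{3}=0.$$
The main obstacle is the bookkeeping in this last step: the non-symmetric spectral data in Proposition~3.4 and the presence of the exceptional entry $(m^+)^{n-1}_{2n-3}$ make the verification of off-diagonal vanishing (ensuring $E_n$ commutes with $E_1,\dots,E_{n-3},E_{n-1}$) more delicate than in the $C_n$ case; one must track carefully which weight profiles of $\wedge^{2}V\otimes\wedge^{2}V$ can contribute and confirm that only the $\alpha_{n-2}$-channel produces a nontrivial commutator.
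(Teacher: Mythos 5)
Your overall strategy coincides with the paper's: get the $[E_n,F_n]$ and coproduct data from Theorem~\ref{cor1}, read off $E_nK_n=q^2K_nE_n$ from the diagonal $R_{VV}$-entry, push $K_n=(m^+)^{\frac{n(n-1)}{2}}_{\frac{n(n-1)}{2}}c^{-1}$ past the $E_j$ using Proposition~3.4, use the cross relations with the minor-diagonal entries (and the exceptional entry $(m^{+})^{n-1}_{2n-3}$ for $E_1$) to isolate the single non-commuting channel $e^{\frac{n(n-1)}{2}-1}=E_nE_{n-2}-q^{-1}E_{n-2}E_n$, and then use $R'$ to get $e^{\frac{n(n-1)}{2}}e^{\frac{n(n-1)}{2}-1}=q\,e^{\frac{n(n-1)}{2}-1}e^{\frac{n(n-1)}{2}}$ and the quadratic Serre relation in $E_n$. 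Up to that point your proposal is essentially the paper's proof (modulo a harmless indexing slip: in this representation $E_i$ sits in $(m^{+})^{i-1}_{i}$, not $(m^{+})^{i}_{i+1}$).

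However, your final step contains a genuine error. You claim the dual Serre relation is cubic in $E_{n-2}$,
$$E_{n-2}^{3}E_n-\Bigl[\begin{smallmatrix}3\\1\end{smallmatrix}\Bigr]_{q}E_{n-2}^{2}E_nE_{n-2}+\Bigl[\begin{smallmatrix}3\\2\end{smallmatrix}\Bigr]_{q}E_{n-2}E_nE_{n-2}^{2}-E_nE_{n-2}^{3}=0,$$
obtained by ``iterating the construction'' to produce a further lower element, exactly as in the $C_n$ case. This cannot be right: a minimal cubic Serre relation forces $a_{n-2,n}=-2$, and together with $a_{n,n-2}=-1$ (from your own quadratic relation in $E_n$) this would require $(\alpha_{n-2},\alpha_{n-2})=1$, contradicting the fact that $\alpha_{n-2}$ is a root of the ambient $A_{n-1}$ with $(\alpha_{n-2},\alpha_{n-2})=2$; the resulting diagram would not be $D_n$ and the theorem would fail. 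The structural point you missed is precisely where $\wedge^{2}V$ differs from $sym^{2}V$: on $\wedge^{2}V$ every $E_i^{2}$ acts as zero (the paper records this just before Proposition~3.4), so the $\alpha_{n-2}$-weight string through $\mu_{\frac{n(n-1)}{2}-1}$ has length two and \emph{no} further element is generated. Concretely, the cross relation $e^{\frac{n(n-1)}{2}-1}(m^{+})^{n-3}_{n-2}=\lambda R^{n-3,\frac{n(n-1)}{2}-1}_{n-3,\frac{n(n-1)}{2}-1}(m^{+})^{n-3}_{n-2}e^{\frac{n(n-1)}{2}-1}$ has a single term, giving the scalar commutation $e^{\frac{n(n-1)}{2}-1}E_{n-2}=qE_{n-2}e^{\frac{n(n-1)}{2}-1}$ directly; substituting $e^{\frac{n(n-1)}{2}-1}=E_nE_{n-2}-q^{-1}E_{n-2}E_n$ then yields the correct \emph{quadratic} relation $(E_{n-2})^{2}E_{n}-(q+q^{-1})E_{n-2}E_{n}E_{n-2}+E_{n}(E_{n-2})^{2}=0$, which is what identifies the node as simply laced and the diagram as $D_n$.
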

\begin{proof}
For the identification in Theorem \ref{theoD},
$[E_{n},F_{n}]=\frac{K_{n}-K_{n}^{-1}}{q-q^{-1}}$,
$\Delta(E_{n})=E_{n}\otimes K_{n}+1\otimes E_{n}$,
and
$\Delta(F_{n})=F_{n}\otimes 1+K_{n}^{-1}\otimes F_{n}$
are deduced easily from Theorem \ref{cor1}.
$$
\begin{array}{rl}
E_{n}K_{n}&=e^{\frac{n(n-1)}{2}}(m^{+})^{\frac{n(n-1)}{2}}_{\frac{n(n-1)}{2}}c^{-1}
=\lambda R^{\frac{n(n-1)}{2},}_{\frac{n(n-1)}{2},}{}^{\frac{n(n-1)}{2}}_{\frac{n(n-1)}{2}}
(m^{+})^{\frac{n(n-1)}{2}}_{\frac{n(n-1)}{2}}e^{\frac{n(n-1)}{2}}c^{-1}\\
&=\lambda\frac{1}{\lambda}R^{\frac{n(n-1)}{2},}_{\frac{n(n-1)}{2},}{}^{\frac{n(n-1)}{2}}_{\frac{n(n-1)}{2}}
(m^{+})^{\frac{n(n-1)}{2}}_{\frac{n(n-1)}{2}}c^{-1}e^{\frac{n(n-1)}{2}}
=q^{\frac{4}{n}}R_{VV}{}^{\frac{n(n-1)}{2},}_{\frac{n(n-1)}{2},}{}^{\frac{n(n-1)}{2}}_{\frac{n(n-1)}{2}}K_{n}E_{n}\\
&=q^{\frac{4}{n}}q^{\frac{2n-4}{n}}K_{n}E_{n}=q^{2}K_{n}E_{n}.
\end{array}$$

We will consider the relations between $K_{n}$ and $E_{i}, 1\leq i\leq n-1$.
According to the equality
$
(m^{+})^{\frac{n(n-1)}{2}}_{\frac{n(n-1)}{2}}=K^{-\frac{2}{n}}_{1}K^{-\frac{2\cdot2}{n}}_{2}
\cdots K^{-\frac{2\cdot(n-3)}{n}}_{n-3}K^{-\frac{2\cdot(n-2)}{n}}_{n-2}K^{-\frac{n-2}{n}}_{n-1},
$
when $1\leq j\leq n-3$,
we obtain
$$
\begin{array}{rl}
K_{n}E_{j}&=K^{-\frac{2}{n}}_{1}K^{-\frac{2\cdot2}{n}}_{2}
\cdots K^{-\frac{2\cdot(n-3)}{n}}_{n-3}K^{-\frac{2\cdot(n-2)}{n}}_{n-2}K^{-\frac{n-2}{n}}_{n-1}c^{-1}E_{j}\\
&=q^{-\frac{2\cdot(j-1)}{n}}q^{2\cdot\frac{2\cdot j}{n}}q^{-\frac{2\cdot(j+1)}{n}}E_{j}K^{-\frac{2}{n}}_{1}K^{-\frac{2\cdot2}{n}}_{2}
\cdots K^{-\frac{2\cdot(n-3)}{n}}_{n-3}K^{-\frac{2\cdot(n-2)}{n}}_{n-2}K^{-\frac{n-2}{n}}_{n-1}c^{-1}
=E_{j}K_{n},
\end{array}
$$
$$
\begin{array}{rl}
K_{n}E_{n-2}
&=K^{-\frac{2}{n}}_{1}K^{-\frac{2\cdot2}{n}}_{2}
\cdots K^{-\frac{2\cdot(n-3)}{n}}_{n-3}K^{-\frac{2\cdot(n-2)}{n}}_{n-2}K^{-\frac{n-2}{n}}_{n-1}c^{-1}E_{n-2}\\
&=q^{-\frac{2\cdot(n-3)}{n}}q^{2\cdot\frac{2\cdot (n-2)}{n}}q^{-\frac{n-2}{n}}E_{n-2}K^{-\frac{2}{n}}_{1}K^{-\frac{2\cdot2}{n}}_{2}
\cdots K^{-\frac{2\cdot(n-3)}{n}}_{n-3}K^{-\frac{2\cdot(n-2)}{n}}_{n-2}K^{-\frac{n-2}{n}}_{n-1}c^{-1}
=qE_{n-2}K_{n},
\end{array}
$$
$$
\begin{array}{rl}
K_{n}E_{n-1}
&=K^{-\frac{2}{n}}_{1}K^{-\frac{2\cdot2}{n}}_{2}
\cdots K^{-\frac{2\cdot(n-3)}{n}}_{n-3}K^{-\frac{2\cdot(n-2)}{n}}_{n-2}K^{-\frac{n-2}{n}}_{n-1}c^{-1}E_{n-1}\\
&=q^{-\frac{2\cdot(n-2)}{n}}q^{2\cdot\frac{n-2}{n}}E_{n-1}K^{-\frac{2}{n}}_{1}K^{-\frac{2\cdot2}{n}}_{2}
\cdots K^{-\frac{2\cdot(n-3)}{n}}_{n-3}K^{-\frac{2\cdot(n-2)}{n}}_{n-2}K^{-\frac{n-2}{n}}_{n-1}c^{-1}
=E_{n-1}K_{n}.
\end{array}
$$

The cross relations between the new simple root vector $E_{n}$ and $K_{i}, 1\leq i\leq n-1$ can be obtained by the following equalities
$$
\left\{
\begin{array}{l}
K_{i}(m^{+})^{i}_{i}=(m^{+})^{i-1}_{i-1},\quad 2\leq i\leq n-1,\\
e^{\frac{n(n-1)}{2}}(m^{+})^{j}_{j}
=\lambda R^{j,}_{j,}{}^{\frac{n(n-1)}{2}}_{\frac{n(n-1)}{2}}(m^{+})^{j}_{j}e^{\frac{n(n-1)}{2}}
=q^{(\mu_{j},\mu_{\frac{n(n-1)}{2}})}(m^{+})^{j}_{j}e^{\frac{n(n-1)}{2}},\quad\mbox{for any}~j.
\end{array}
\right.
$$

So
$
e^{\frac{n(n-1)}{2}}K_{i}
=q^{(\mu_{i-1}-\mu_{i},\mu_{\frac{n(n-1)}{2}})}K_{i}e^{\frac{n(n-1)}{2}}
$,
and according to
$$
q^{(\mu_{i-1}-\mu_{i},\mu_{\frac{n(n-1)}{2}})}
=q^{(-\alpha_{i},-2\lambda_{1}+2\alpha_{1}+\cdots 2\alpha_{n-2}+\alpha_{n-1})}
=\left\{
\begin{array}{l}
1,\quad 2\leq i\leq n-3,\\
q^{-1},\quad  i=n-2,\\
1,\quad i=n-1,
\end{array}
\right.
$$
then we obtain
$
\left\{
\begin{array}{l}
E_{n}K_{i}=K_{i}E_{n},\quad 2\leq i\leq n-3,\\
E_{n}K_{n-2}=q^{-1}K_{n-2}E_{n},\\
E_{n}K_{n-1}=K_{n-1}E_{n}.
\end{array}
\right.
$

Associated with
$E_{n}K_{n}=q^{2}K_{n}E_{n}$,
we obtain
$E_{n}K_{1}=K_{1}E_{n}$.
We will explore the $q$-Serre relations between $E_{n}$ and $E_{i}, 1\leq i\leq n-1$.
We see that $E_{1}$
is included in the $(m^{+})^{n-1}_{2n-3}$,
so the relation
$E_{n}E_{1}=E_{1}E_{n}$
can be deduced from
$$
e^{\frac{n(n-1)}{2}}(m^{+})^{n-1}_{2n-3}
=\lambda R^{n-1,}_{n-1,}{}^{\frac{n(n-1)}{2}}_{\frac{n(n-1)}{2}}(m^{+})^{n-1}_{2n-3}e^{\frac{n(n-1)}{2}}
=q^{(\mu_{n-1},\mu_{\frac{n(n-1)}{2}})}(m^{+})^{n-1}_{2n-3}e^{\frac{n(n-1)}{2}}.
$$
The other $q$-Serre relations can be deduced from the following relations
$$
\left\{
\begin{array}{l}
e^{\frac{n(n-1)}{2}}(m^{+})^{i-1}_{i}
=\lambda R^{i-1,}_{i-1,}{}^{\frac{n(n-1)}{2}}_{\frac{n(n-1)}{2}}(m^{+})^{i-1}_{i}e^{\frac{n(n-1)}{2}},\quad 2\leq i\leq n-1,
\mbox{and} \ i\neq n-2,\\
e^{\frac{n(n-1)}{2}}(m^{+})^{n-3}_{n-2}
=\lambda R^{n-3,}_{n-3,}{}^{\frac{n(n-1)}{2}}_{\frac{n(n-1)}{2}}(m^{+})^{n-3}_{n-2}e^{\frac{n(n-1)}{2}}
+\lambda R^{n-3,}_{n-2,}{}^{\frac{n(n-1)}{2}}_{\frac{n(n-1)}{2}}(m^{+})^{n-2}_{n-2}e^{\frac{n(n-1)}{2}-1},\\
(m^{+})^{i-1}_{i}=(q-q^{-1})E_{i}(m^{+})^{i}_{i},\quad 2\leq i\leq n-1.
\end{array}
\right.
$$
Then we obtain
$
\left\{
\begin{array}{l}
E_{n}E_{i}=E_{i}E_{n},\quad 2\leq i\leq n-1,\mbox{and} \ i\neq n-2,\\
e^{\frac{n(n-1)}{2}-1}
=E_{n}E_{n-2}-q^{-1}E_{n-2}E_{n}.
\end{array}
\right.
$

In order to explore the relation of $e^{\frac{n(n-1)}{2}}$ and $E_{n-2}$,
we need to know the relations between $e^{\frac{n(n-1)}{2}}$ and $e^{\frac{n(n-1)}{2}-1}, E_{n-2}$.
From
$
R^{\prime}
=RPR-(q^{2}+1)R+(q^{2}+1)P,
$
we get
$$
R^{\prime}{}^{\frac{n(n-1)}{2},}_{\frac{n(n-1)}{2}-1,}{}^{\frac{n(n-1)}{2}-1}_{\frac{n(n-1)}{2}}=2q^{2}+1,\quad
R^{\prime}{}^{\frac{n(n-1)}{2},}_{\frac{n(n-1)}{2},}{}^{\frac{n(n-1)}{2}-1}_{\frac{n(n-1)}{2}-1}=-2q,
$$
then
$
e^{\frac{n(n-1)}{2}-1}e^{\frac{n(n-1)}{2}}
=R^{\prime}{}^{\frac{n(n-1)}{2},}_{a,}{}^{\frac{n(n-1)}{2}-1}_{b}e^{a}e^{b}
=(2q^{2}+1)e^{\frac{n(n-1)}{2}-1}e^{\frac{n(n-1)}{2}}
-2qe^{\frac{n(n-1)}{2}}e^{\frac{n(n-1)}{2}-1},
$
so
$
e^{\frac{n(n-1)}{2}}e^{\frac{n(n-1)}{2}-1}
=qe^{\frac{n(n-1)}{2}-1}e^{\frac{n(n-1)}{2}}.
$
Combining with
$
e^{\frac{n(n-1)}{2}-1}
=E_{n}E_{n-2}-q^{-1}E_{n-2}E_{n},
$
we obtain
$$
(E_{n})^{2}E_{n-2}-(q+q^{-1})E_{n}E_{n-2}E_{n}+E_{n-2}(E_{n})^{2}=0.
$$
On the other hand,
the relation between $e^{\frac{n(n-1)}{2}}$ and $E_{n-2}$ is
$
e^{\frac{n(n-1)}{2}-1}E_{n-2}=qE_{n-2}e^{\frac{n(n-1)}{2}-1},
$
which is deduced from
$
e^{\frac{n(n-1)}{2}-1}(m^{+})^{n-3}_{n-2}
=\lambda R^{n-3,}_{n-3,}{}^{\frac{n(n-1)}{2}-1}_{\frac{n(n-1)}{2}-1}(m^{+})^{n-3}_{n-2}e^{\frac{n(n-1)}{2}-1}.
$
Combining with
$
e^{\frac{n(n-1)}{2}-1}
=E_{n}E_{n-2}-q^{-1}E_{n-2}E_{n}
$
again,
we obtain
$$
(E_{n-2})^{2}E_{n}-(q+q^{-1})E_{n-2}E_{n}E_{n-2}+E_{n}(E_{n-2})^{2}=0.
$$
The relations of negative part can be obtained by the similar analysis.

The length of the new simple root $\alpha_{n}$ corresponding to the additional simple root vector $E_{n},F_{n}$ is
$(\alpha_{n},\alpha_{n})=2$,
$(\alpha_{n-1},\alpha_{n})=0$,
$(\alpha_{n-2},\alpha_{n})=-1$,
and
$(\alpha_{i},\alpha_{n})=0, 1\leq i\leq n-3$.
This leads to the Cartan matrix of type $D_n$.

The proof is complete.
\end{proof}

\section{How the tree of quantum groups grows up by double-bosonization procedure}
As a summary of the results in section 3, together with the results of \cite{HH1, HH2, HH3} we have obtained,
we also have a root-system expression for the double-bosonization recursive constructions of $U_q(\mathfrak g)$'s for the finite-dimensional complex simple Lie algebras $\mathfrak{g}$.

\begin{proposition}
Let $T_{V}$ be an irreducible $p$-dimensional representation of
$U_{q}(\mathfrak g)$ with highest weight $-\mu$ (the difference here
from \cite{rosso} results from that we use the Majid's version of
$U_q(\mathfrak g)$ whose differences with its standard version lie
$K_i^{\pm}\mapsto K_i^{\mp}$ and $E_i\mapsto -E_i$), $\nu$ the
weight of the central element $c^{-1}$,
$(a_{ij})_{(n-1)\times(n-1)}$ the Cartan matrix of $\mathfrak g$.
Then the corresponding Cartan matrix of the new quantum group
$U(V^{\vee}(R^{\prime},
R_{21}^{-1}),\widetilde{U_{q}^{ext}(\mathfrak g)},V(R^{\prime},R))$
is of a higher rank one, which is obtained from
$(a_{ij})_{(n-1)\times(n-1)}$ by adding a row and a column with:
$a_{i,n}=\frac{2(\alpha_{i},\mu)}{(\alpha_{i},\alpha_{i})}, \,
a_{n,i}=\frac{2(\mu,\alpha_{i})}{(\mu,\mu)+(\nu,\nu)}$, and $\nu$ is
orthogonal to $\mu$ and $\alpha_{i},i=1,\cdots,n-1$.
\end{proposition}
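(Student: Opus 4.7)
The plan is to reduce the proposition to a uniform version of the computations already carried out in the proofs of Theorems~3.1--3.3. Write $p = \dim V$ and let $v_{p}$ be the top-weight vector of $T_{V}$; identify the new Chevalley-type generators by $E_{n} := e^{p}$, $F_{n} := f_{p}$ and $K_{n} := (m^{+})^{p}_{p}\,c^{-1}$. From Theorem~\ref{cor1} one reads off immediately
\[
[E_{n}, F_{n}] = \frac{K_{n} - K_{n}^{-1}}{q_{\ast} - q_{\ast}^{-1}}, \quad \Delta E_{n} = E_{n} \otimes K_{n} + 1 \otimes E_{n}, \quad \Delta F_{n} = F_{n} \otimes 1 + K_{n}^{-1} \otimes F_{n},
\]
so the proposition reduces to computing $K_{i}E_{n}K_{i}^{-1}$, $K_{n}E_{i}K_{n}^{-1}$ and $K_{n}E_{n}K_{n}^{-1}$ and matching these against the standard Cartan relations.

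For the diagonal entry $a_{nn}$: since $v_{p}$ is a highest weight vector, $R_{VV}$ acts on $v_{p}\otimes v_{p}$ diagonally with eigenvalue $q^{(\mu_{p},\mu_{p})}=q^{(\mu,\mu)}$, so the cross relation $e^{p}(m^{+})^{p}_{p} = R_{VV}{}^{pp}_{ab}(m^{+})^{a}_{p}e^{b}$ of Theorem~\ref{cor1} collapses to $e^{p}(m^{+})^{p}_{p} = q^{(\mu,\mu)}(m^{+})^{p}_{p}e^{p}$. Combined with the central extension rule $e^{p}c = \lambda c\,e^{p}$, in which I expect the normalization constant to be $\lambda = q^{-(\nu,\nu)}$, this yields $(\alpha_{n},\alpha_{n}) = (\mu,\mu)+(\nu,\nu)$. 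The identification $\lambda = q^{-(\nu,\nu)}$ is consistent with the three cases of \S 3: for $B_{n}$, $\lambda=q^{-1/n}$ and $(\nu,\nu)=1/n=(\alpha_{n},\alpha_{n})-(\lambda_{1},\lambda_{1})$; for $C_{n}$ and $D_{n}$, $\lambda=q^{-4/n}$ and $(\nu,\nu)=4/n$ absorbs the missing length in exactly the same way.

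For the off-diagonal entries, Lemma~\ref{lem1} identifies $(m^{+})^{p}_{p}=S(l^{+}_{pp})$ with the group-like element of $U_{q}^{\text{ext}}(\mathfrak g)$ corresponding to the weight of $v_{p}$, which is why the fractional powers $K_{i}^{\pm 1/m}$ must be adjoined. Hence $(m^{+})^{p}_{p}E_{i}(m^{+})^{-p}_{p}$ is a power of $q$ governed by the pairing $(\mu,\alpha_{i})$, and since $c$ is central in $U_{q}^{\text{ext}}(\mathfrak g)$ the scalar for $K_{n}E_{i}K_{n}^{-1}$ is the same; comparing against $K_{n}E_{i}K_{n}^{-1} = q^{\pm(\alpha_{n},\alpha_{i})}E_{i}$ forces $(\alpha_{n},\alpha_{i})=(\mu,\alpha_{i})$ and hence $a_{ni}=2(\mu,\alpha_{i})/((\mu,\mu)+(\nu,\nu))$. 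Symmetrically, $e^{p}$ is a weight vector inside $V(R',R)$ for the $\widetilde{U_{q}^{\text{ext}}(\mathfrak g)}$-action, its weight coinciding with that of $v_{p}$ under $T_{V}$, so $K_{i}e^{p}K_{i}^{-1}$ is $q$ to $(\alpha_{i},\mu)$ (up to the universal sign dictated by Majid's conventions), which translates into $a_{in}=2(\alpha_{i},\mu)/(\alpha_{i},\alpha_{i})$. The orthogonality of $\nu$ to the root lattice is automatic, because $c$ commutes with $U_{q}^{\text{ext}}(\mathfrak g)$, so the new Cartan direction supplied by $c$ lies outside the span of the existing $\alpha_{i}$'s.

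The main obstacle is making the identification $\lambda = q^{-(\nu,\nu)}$ precise and convention-independent: it is the single bookkeeping step that converts the abstract normalization of $R_{VV}$ into a genuine length in a one-dimensional extension of the ambient Euclidean space, tying together the case-by-case values of $\lambda$ coming from \cite{HH1,HH2,HH3} and \S 3. Once this dictionary is in hand, every other scalar is forced by the standard cross relations of Theorem~\ref{cor1} without new combinatorics. The $q$-Serre relations between $(E_{n},F_{n})$ and $(E_{i},F_{i})$, not part of the Cartan-integer statement itself but needed for the resulting algebra to be a true quantized enveloping algebra, are supplied separately by passing to the quotients of the braided (co)vector algebras by the radicals of the dual pairing, in the fashion of Proposition~\ref{propR} and Theorems~3.1--3.3.
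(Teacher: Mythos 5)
Your proposal is correct and follows the same overall strategy as the paper: identify $E_n=e^{p}$, $F_n=f_{p}$, $K_n=(m^{+})^{p}_{p}c^{-1}=K_{\mu}c^{-1}=K_{\mu+\nu}$, and read the new Cartan integers off the cross relations of Theorem~\ref{cor1}. The two places where your route differs are worth noting. First, for the orthogonality $(\nu,\alpha_{j})=0$ the paper does not simply invoke centrality of $c$: it extracts from the cross relations with $(m^{+})^{i}_{i+1}$ the identity expressing $e^{i-1}$ as a $q$-commutator of $e^{i}$ with $E_{j}$, concludes $\mathrm{wt}(e^{i-1})=\mathrm{wt}(e^{i})+\alpha_{j}$, and then uses that $c^{-1}$ rescales every $e^{k}$ by the \emph{same} factor $\lambda^{-1}$; your centrality argument reaches the same conclusion more directly, and it also subsumes $(\nu,\mu)=0$ (since $\mu$ lies in the rational span of the $\alpha_{i}$), which the paper instead checks from the explicit form of $K_{\mu+\nu}$ in each case. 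Second, you make explicit the uniform normalization $\lambda=q^{-(\nu,\nu)}$ and use it to derive $(\alpha_{n},\alpha_{n})=(\mu,\mu)+(\nu,\nu)$ from the single diagonal entry $R_{VV}{}^{pp}_{pp}=q^{(\mu,\mu)}$; the paper never states this identity, leaving the diagonal Cartan entry implicit in the case-by-case choices of $\nu$ in Section~4, so your version is actually the more uniform bookkeeping --- though, as you acknowledge, it is verified case by case ($\lambda=q^{-1/n}$ versus $q^{-4/n}$) rather than proved abstractly, which matches the paper's own level of rigor here.
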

\begin{proof}
From the recursive constructions in \cite{HH1,HH2,HH3} and the results in section 3 of
this paper, we know that the new additional group-like element
$K_{n}$ is the element $(m^{+})^{p}_{p}c^{-1}$. From the
expression of $(m^{+})^{p}_{p}$ in each case, we observe that the
corresponding weight of $(m^{+})^{p}_{p}$ is $\mu$, namely
$(m^{+})^{p}_{p}=K_{\mu}$, so $K_{n}=K_{\mu+\nu}$. Every simple
root vector $E_{j}$ normally locates in the minor diagonal entry
$(m^{+})^{i}_{i+1}$, and
$(m^{+})^{i}_{i+1}=a_{i}E_{j}(m^{+})^{i+1}_{i+1}$, $a_{i}\in
k[q,q^{-1}]$. According to the cross relations in Theorem
\ref{cor1}, we obtain the following relations between $e^{i}$ and
$e^{i-1}$.
$$ \left\{
\begin{array}{l}
e^{i}(m^{+})^{i}_{i+1}=\lambda R^{i}_{a}{}^{i}_{b}(m^{+})^{a}_{i+1}e^{b}
=\lambda R^{i}_{i}{}^{i}_{i}(m^{+})^{i}_{i+1}e^{i}+\lambda R^{i}_{i+1}{}^{i}_{i-1}(m^{+})^{i+1}_{i+1}e^{i-1},\\
e^{i}(m^{+})^{i+1}_{i+1}=\lambda R^{i+1}_{i+1}{}^{i}_{i}(m^{+})^{i+1}_{i+1}e^{i},\\
e^{i-1}(m^{+})^{i+1}_{i+1}=\lambda
R^{i+1}_{i+1}{}^{i-1}_{i-1}(m^{+})^{i+1}_{i+1}e^{i-1}.
\end{array}
\right.
$$
Combining with $(m^{+})^{i}_{i+1}=a_{i}E_{j}(m^{+})^{i+1}_{i+1}$, we
get
$e^{i-1}=a_{i}\frac{R^{i+1}_{i+1}{}^{i-1}_{i-1}}{R^{i}_{i+1}{}^{i}_{i-1}}
(e^{i}E_{j}-\frac{R^{i}_{i}{}^{i}_{i}}{R^{i+1}_{i+1}{}^{i}_{i}}E_{j}e^{i})$.
Then $\text{wt}(e^{i-1})=\text{wt}(e^{i})+\alpha_j$, combining with
$e^{k}\lhd c^{-1}=\lambda^{-1}e^{k}$ for any $k$, so
$(\nu,\alpha_{j})=0$. On the other hand, we know the explicit form
of the additional group-like element $K_{\mu+\nu}$ in each recursive
construction, so we can obtain the specific form of $\nu$, and $\nu$
is orthogonal to $\mu$.
\end{proof}

In this way, we will give the specific form of weights $\mu$ and $\nu$ for the above three cases in section 3.
The imaginary line and the filled circle mean that the Dynkin diagram $A_{n-1}$ extends to
the new added simple root with an arrow pointing to the shorter of the two roots.

$(1)$ The $B_{n}$ series.
\ Take $\mathfrak{g}={\mathfrak {sl}}_{n}$ and $V$ the first fundamental representation (i.e., the vector representation),
with lowest weight $\mu=-\lambda_{n-1}$,
choose $\nu=\frac{1}{n}\sum\limits_{i=1}^{n}\varepsilon_{i}$.
Then we get ${\mathfrak {so}}_{2n+1}$.

\setlength{\unitlength}{1mm}
\begin{picture}(98,10)
\put(6,4){\circle{1}}
\put(6.5,4){\line(1,0){12}}
\put(3,0){$\varepsilon_{1}-\varepsilon_{2}$}
\put(5,6){$\alpha_{1}$}
\put(19,4){\circle{1}}
\put(19.5,4){\line(1,0){12}}
\put(18,6){$\alpha_{2}$}
\put(15,0){$\varepsilon_{2}-\varepsilon_{3}$}
\multiput(32.5,4)(3,0){4}{\line(1,0){2}}
\put(45.5,4){\line(1,0){12}}
\put(58,4){\circle{1}}
\put(58.5,4){\line(1,0){12}}
\put(57,6){$\alpha_{n-2}$}
\put(43,0){$\varepsilon_{n-2}-\varepsilon_{n-1}$}
\put(71,4){\circle{1}}
\multiput(71.5,3.7)(3,0){6}{\line(1,0){2}}
\multiput(71.5,4.3)(3,0){6}{\line(1,0){2}}
\put(80,3.1){$>$}
\put(70,6){$\alpha_{n-1}$}
\put(65,0){$\varepsilon_{n-1}-\varepsilon_{n}$}
\put(89,4){\circle*{1}}
\put(89,0){$\varepsilon_{n}$}
\end{picture}

$(2)$ The $C_{n}$ series.
\ Take $\mathfrak{g}={\mathfrak {sl}}_{n}$ and $sym^{2}V$ the quantum symmetric square of vector representation,
with lowest weight $\mu=-2\lambda_{n-1}$,
choose $\nu=\frac{2}{n}\sum\limits_{i=1}^{n}\varepsilon_{i}$.
Then we get ${\mathfrak {sp}}_{2n}$.
\setlength{\unitlength}{1mm}
\begin{picture}(98,10)
\put(6,4){\circle{1}}
\put(6.5,4){\line(1,0){12}}
\put(3,0){$\varepsilon_{1}-\varepsilon_{2}$}
\put(5,6){$\alpha_{1}$}
\put(19,4){\circle{1}}
\put(19.5,4){\line(1,0){12}}
\put(18,6){$\alpha_{2}$}
\put(15,0){$\varepsilon_{2}-\varepsilon_{3}$}
\multiput(32.5,4)(3,0){4}{\line(1,0){2}}
\put(45.5,4){\line(1,0){12}}
\put(58,4){\circle{1}}
\put(58.5,4){\line(1,0){12}}
\put(57,6){$\alpha_{n-2}$}
\put(43,0){$\varepsilon_{n-2}-\varepsilon_{n-1}$}
\put(71,4){\circle{1}}
\multiput(71.5,3.7)(3,0){6}{\line(1,0){2}}
\multiput(71.5,4.3)(3,0){6}{\line(1,0){2}}
\put(80,3.1){$<$}
\put(70,6){$\alpha_{n-1}$}
\put(65,0){$\varepsilon_{n-1}-\varepsilon_{n}$}
\put(89,4){\circle*{1}}
\put(89,0){$2\varepsilon_{n}$}
\end{picture}

$(3)$ The $D_{n}$ series.
\ Take $\mathfrak{g}={\mathfrak {sl}}_{n}$ and $\wedge^{2}V$ the second quantum exterior power of vector representation,
with lowest weight $\mu=-\lambda_{n-2}$,
choose $\nu=\frac{2}{n}\sum\limits_{i=1}^{n}\varepsilon_{i}$.
Then we get ${\mathfrak {so}}_{2n}$.
\setlength{\unitlength}{1mm}
\begin{picture}(98,13)
\put(6,4){\circle{1}}
\put(6.5,4){\line(1,0){12}}
\put(3,0){$\varepsilon_{1}-\varepsilon_{2}$}
\put(5,6){$\alpha_{1}$}
\put(19,4){\circle{1}}
\put(19.5,4){\line(1,0){12}}
\put(18,6){$\alpha_{2}$}
\put(15,0){$\varepsilon_{2}-\varepsilon_{3}$}
\multiput(32.5,4)(3,0){4}{\line(1,0){2}}
\put(45.5,4){\line(1,0){12}}
\put(58,4){\circle{1}}
\multiput(58.5,4)(3,0){5}{\line(1,0){2}}
\put(57,6){$\alpha_{n-2}$}
\put(45,0){$\varepsilon_{n-2}-\varepsilon_{n-1}$}
\put(73,4){\circle*{1}}
\put(71,1){$\varepsilon_{n-1}+\varepsilon_{n}$}
\put(70,13){$\alpha_{n-1}$}
\put(72,8){$\varepsilon_{n-1}-\varepsilon_{n}$}
\put(72,11){\circle{1}}
\put(58.5,4){\line(2,1){13}}
\end{picture}

In order to give the readers an intuitive understanding, based on our constructions in section 3 as well as those in \cite{HH1, HH2, HH3},
we draw the tree of quantum groups with nodes $U_{q}(\mathfrak{g})$'s
for all the finite dimensional complex simple Lie algebras $\mathfrak{g}$,
which grow out of the source node $A_1$ inductively by a series of suitably chosen double-bosonization procedures.
This is the Majid's main expectation for his conjeture (\cite{majid1, majid6}).

\noindent
\includegraphics*{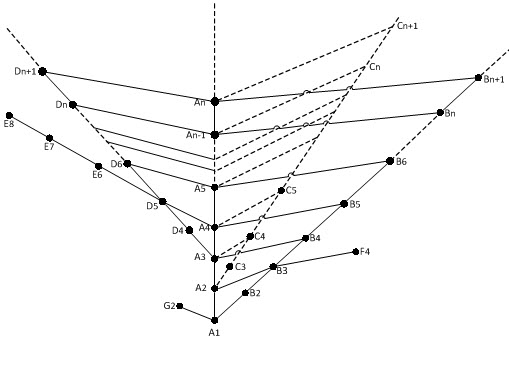}


\end{document}